\numberwithin{equation}{section}
\def\csect#1{Section~\ref{#1}}
\def\capp#1{Appendix~\ref{#1}}
\def\csects#1#2{Sections~\ref{#1} and \ref{#2}}
\def\csectt#1#2{Sections~\ref{#1}--\ref{#2}}
\let\Prp=\Pr \def\Pr{\Prp\nolimits}
\newcommand{\be}{\begin{equation}}
\newcommand{\ee}{\end{equation}}
\newtheorem{theorem}{Theorem}[section]
\newtheorem{corollary}[theorem]{Corollary}
\newtheorem{lemma}[theorem]{Lemma}
\newtheorem{conjecture}[theorem]{Conjecture}
{\theorembodyfont{\rm} 
\newtheorem{remark}[theorem]{Remark}
\newtheorem{definition}[theorem]{Definition}
}
\newcommand{\cthm}[1]{Theorem~\ref{#1}}
\newcommand{\cthms}[2]{Theorems~\ref{#1} and \ref{#2}}
\newcommand{\clem}[1]{Lemma~\ref{#1}}
\newcommand{\cdef}[1]{Definition~\ref{#1}}
\newcommand{\ccor}[1]{Corollary~\ref{#1}}
\newcommand{\cfig}[1]{Figure~\ref{#1}}
\newcommand{\crem}[1]{Remark~\ref{#1}}
\newcommand{\cconj}[1]{Conjecture~\ref{#1}}
\newenvironment{proof}{\noindent\textbf{Proof:} }
   {\hfill {\quad$\blacksquare$}\par\medskip}
\newenvironment{proofof}[1]{\medskip\noindent
   \textbf{Proof of #1:} }{\hfill {\quad$\blacksquare$}\par\medskip}
    \def\bbz{\mathbb{Z}}
    \def\bbr{\mathbb{R}}
    \def\bbn{\mathbb{N}}
    \def\bbj{\mathbb{J}}
    \def\bbjb{\overline{\mathbb{J}}}
\def\n{{\bf n}}\def\m{{\bf m}}\def\i{{\bf i}}
\def\L{{\bf L}}
  \def\P{{\cal P}}
\def\interiorM{\rlap{\raise9pt\hbox to9.5pt{\hss$\scriptscriptstyle\circ$}}M}
\def\Fc{{\cal F}}\def\Fch{\widehat\Fc}
\def\S{{\cal S}}\def\L{{\cal L}}
\def\M{{\cal M}}\def\Mb{\overline\M}\def\T{{\cal T}}
\def\N{{\cal N}}\def\Nb{\overline\N}
\def\tauhat{\hat\tau}
\def\Xh{\widehat X}\def\Yh{\widehat Y}\def\Fh{\widehat F}
\def\Ph{\widehat P}\def\Ah{\widehat A}
\def\Xt{\widetilde X}\def\Pt{\widetilde P}
\def\Qh{\widehat Q}\def\Ahat{\widehat A}
\def\Qo{\overline Q}\def\Yo{\overline Y}\def\Xo{\overline X}
\def\tauo{\overline\tau}\def\nuo{\overline\nu}
\def\rhohat{\hat\rho}\def\pihat{\hat\pi}
\long\def\kill#1\endkill{\relax}
\def\0{{\it0}}\def\1{{\it1}}\def\2{{\it2}}\def\3{{\it3}}
\def\rng#1#2{\hbox{$(#1\!:\!#2)$}}
\newdimen\mbsize \mbsize=\hsize \multiply\mbsize by 4  \divide\mbsize by 5
\def\muLN{\mu^{(N,L)}}\def\muLz{\mu^{(\zeta,L)}}\def\muz{\mu^{(\zeta,\infty)}}
\def\ZLN{{\cal Z}_{N,L}}
\def\phileft{\phi_{\rm left}}
\def\phiright{\phi_{\rm right}}
\def\muhat{\hat\mu}\def\muhattwo{\muhat^{(2)}}
\def\nuhat{\hat\nu}
\def\Xha{\Xh^*}
\def\XhaL{\Xh^{*(L)}}\def\XhaNL{\Xh^{*(N,L)}}
\def\XhaK{\Xh^{*(K)}}
\def\Psih{\widehat\Psi}
\def\ind{{\bf1}}
\begin{document}

\title{\vskip-0.2truein Stationary States of the One-Dimensional
Discrete-Time Facilitated Symmetric Exclusion Process\footnote{Dedicated
  to the memory of Freeman Dyson, friend and teacher.}}

\author{S. Goldstein\footnote{Department of Mathematics,
Rutgers University, New Brunswick, NJ 08903.},
J. L. Lebowitz\footnotemark[1],
\footnote{Also Department of Physics, Rutgers.}\ \ 
and E. R. Speer\footnotemark[1]}
\date{January 12, 2022}
\maketitle

\begin{flushleft}\noindent {\bf Keywords:} Symmetric facilitated exclusion
processes, symmetric stack model, one dimensional conserved lattice gas,
facilitated jumps, translation invariant steady states, F-SSEP
\end{flushleft}

\par\medskip\noindent
 {\bf AMS subject classifications:} 60K35, 82C22, 82C23, 82C26

\begin{abstract} We describe the extremal translation invariant
stationary (ETIS) states of the facilitated exclusion process on $\bbz$.
In this model all particles on sites with one occupied and one empty
neighbor jump at each integer time to the empty neighbor site, and if two
particles attempt to jump into the same empty site we choose one randomly
to succeed. The ETIS states are qualitatively different for densities
$\rho<1/2$, $\rho=1/2$, and $1/2<\rho<1$, but in each density region we
find states which may be grouped into families, each of which is in
natural correspondence with the set of all ergodic measures on
$\{0,1\}^\bbz$.  For $\rho<1/2$ there is one such family, containing all
the ergodic states in which the probability of two adjacent occupied
sites is zero.  For $\rho=1/2$ there are two families, in which
configurations translate to the left and right, respectively, with
constant speed 2.  For the high density case there is a continuum of
families.  We show that all ETIS states at densities $\rho\le1/2$ belong
to these families, and conjecture that also at high density there are no
other ETIS states.  We also study the possible ETIS states which might
occur if the conjecture fails.  \end{abstract}

\section{Introduction\label{intro}}

The {\it facilitated symmetric simple exclusion process} (F-SSEP) is a
model of particles moving on a lattice, in which a particle can jump to a
neighboring (empty) site only if another of its neighboring sites is
occupied (by a {\it facilitating} particle).  In this paper we consider
only the case of synchronous discrete-time dynamics on the
one-dimensional lattice $\bbz$, except that in \crem{ulmu} below we
discuss briefly the situation for one-dimensional continuous time
dynamics.  For further results on the one-dimensional case, see
\cite{AGLS,bbcs,BM,BESS,BES,CZ,GLS2,gkr,GR,Oliveira,ST}; for results of
simulations of the continuouis-time model in higher dimensions see
\cite{hl,mcl,rpv}.

The configuration space of the model is $X=\{0,1\}^\bbz$; if $\eta$ is a
configuration in $X$ then we say that a site $i$ with $\eta(i)=1$ is {\it
occupied} by a particle, and a site with $\eta(i)=0$ is {\it unoccupied}
or {\it empty}.  The (stochastic) dynamics is defined as follows: if
$\eta_t$ is the configuration at time $t$, $t\in\bbz$, then each particle
in $\eta_t$ with exactly one occupied neighboring site attempts to jump
to its unoccupied neighboring site; the jump takes place unless two
particles attempt to jump on the same site, in which case one of them is
chosen at random to succeed, with each choice equally likely.
$\eta_{t+1}$ is the resulting configuration.

Our goal is to classify the states---probability measures on $X$---which
are translation invariant (TI) and stationary for the F-SSEP
dynamics, the {\it TIS states}.  Every TIS state is a convex combination
of the extremal TIS (ETIS) states, that is, of the TIS states
which are not proper convex combinations of others, so it suffices to
find the ETIS states.  The ETIS states need not be extremal TI
(ETI)---i.e., ergodic under translations---but since particles are
neither created nor destroyed, extremality in the class of TIS states
suffices to guarantee (see \clem{densities}) that each ETIS state will be
supported on the set $X_\rho$ of configurations having particle density
$\rho$ for some $\rho$ with $0\le\rho\le1$, that is, satisfying
 \be\label{rhodef}
\lim_{N\to\infty}\frac1N\sum_{i=1}^N\eta_i
 = \lim_{N\to\infty}\frac1N\sum_{i=-N}^{-1}\eta_i = \rho.
 \ee
 We will say that a TI state {\it has density $\rho$} if it is supported
on $X_\rho$.  (Note that this condition implies that for each $i\in\bbz$
the expected value of $\eta(i)$ is $\rho$, but is in fact a stronger
statement.)

It is convenient to consider also a second particle system on $\bbz$,
again evolving in discrete time: the {\it symmetric stack model} (SSM).
In this model there are no restrictions on the number of particles at any
site, so that the configuration space is $\Xh=\bbz_+^\bbz$, where
$\bbz_+=\{0,1,2,\ldots\}$.  We denote stack configurations by boldface
letters, and to distinguish explicit stack configurations from F-SSEP
particle configurations we will use italics for the former, so that for
$\n\in \Xh$ we might have $\n(0)=\2$.  To specify the evolution of the
SSM, let us say that the stack at site $i$ is {\it short\/} if
$\n(i)\le\1$ and {\it tall\/} otherwise.  Then in the transition from
$\n_t$ to $\n_{t+1}$ either zero or one particle moves along each bond
$\langle k,k+1\rangle$, either to the left or to the right: if the stacks
at $k$ and $k+1$ are both short then no particle moves on the bond; if
one is short and one tall then a particle moves from the tall to the
short stack, and if both are tall then a particle moves from one to the
other in a randomly chosen direction, with each direction equally likely.

For the SSM we again speak of TIS and ETIS states and, for
$\rhohat<\infty$, of states of density $\rhohat$, where the latter are
those supported on $\Xh_{\rhohat}$, the set of SSM configurations of
density $\rhohat$, defined in parallel with \eqref{rhodef}.  If the
expected value of $\n(0)$ is finite in a TI state on $\Xh$ then we say
that the state is {\it regular}.  A TI state on $X$ is called regular if
it gives zero probability to the configuration $\eta$ for which
$\eta(i)=1$ for all $i$.

The SSM is connected with the F-SSEP through a {\it substitution map}
$\phi:\Xh\to X$: if $\n\in \Xh$ then $\phi(\n)$ is obtained by replacing
each $\n(i)$ with a zero followed by $\n(i)$ ones.  (Such a mapping has
also been used to relate exclusion and zero range processes; see, e.g.,
\cite{EH,FPV}.)  In \capp{subs} we show that this substitution, and
members of a large class of similar substitutions, give rise to a
bijection of the regular TI or ETI states of the models related by the
substitution.  Moreover, we show there that, for the particular
substitution $\phi$ above, the bijection $\Phi_\phi$ of the regular TI
states is also a bijection from the regular TIS or ETIS states of the SSM
to those of the F-SSEP.  Thus for the question of interest here---the
nature and classification of the TIS states---the F-SSEP and SSM are
essentially equivalent, and we can and will pass freely from one to the
other.  Note that if $\n$ has density $\rhohat$ then $\phi(\n)$ has
density $\rho=\rhohat/(1+\rhohat)$; correspondingly, $\Phi_\phi$ carries
regular SSM states of density $\rhohat$ to regular F-SSEP states of
density $\rho$.

Recall now that each regular ETIS state is associated with some density
$\rho$ in the F-SSEP or equivalently $\rhohat=\rho/(1-\rho)$ in the SSM.
The classification of the ETIS states of the models is qualitatively
different in the three density regions $0\le\rho\le1/2$, $\rho=1/2$, and
$1/2\le\rho<1$, or equivalently $0\le\rhohat\le1$, $\rhohat=1$, and
$1\le\rhohat<\infty$.  The states in the first two of these regions are of
course all regular.

Consider first the low density region.  The set of ETIS states of the SSM
with $0\le\rhohat\le1$ is precisely the set of ETI states supported on
$\Fh\subset\Xh$, the set of {\it frozen} SSM configurations for which
every stack has height zero or one (note that in fact $\Fh=X$).  For the
F-SSEP the corresponding result is that the ETIS states are the ETI
states supported on the set $F$ of frozen F-SSEP configurations: those in
which no two adjacent sites are occupied and hence no particle jumps are
possible.

 When $\rhohat=1$ in the SSM ($\rho=1/2$ in the F-SSEP) there are two
families of ETIS states in each model; these describe patterns moving to
the left or to the right, respectively, with speed 1 in the SSM and speed
2 in the F-SSEP.  In the SSM the left-moving family consists of all ETI
states supported on $\Xh_{\rm left}$, the set of configurations in which
no stack has height more than 2, a stack of height 2 can be followed only
by one of height 0, and a stack of height 0 can be preceded only by one
of height 2.  Similarly, the right-moving family consists of the ETI
states on $\Xh_{\rm right}$, the spatial reflection of $\Xh_{\rm left}$.
Two states belong to both families: the state $\muhat^{(1)}$ (which is
also one of the low-density states of the previous paragraph) supported
on the single configuration in which all stacks have height $\1$, and the
state $\muhat^{(2)}$ supported with equal probability on the two
configurations in which stacks of height $\0$ and $\2$ alternate.  The
left- and right-moving families in the F-SSEP are obtained from those of
the SSM via the map $\Phi_\phi$.

All TIS states in the high density region of the SSM, $1\le\rhohat$, are
supported on $\Xha\subset \Xh$, the set of configurations for which no
two adjacent sites both have short stacks.  On $\Xha$ the dynamics
preserves the parity of each stack height, so that if for $\sigma\in X$
we let $\Xha_\sigma\subset\Xha$ be the set of configurations $\n$ for
which $\n(i)$ has parity $(-1)^{\sigma(i)}$ then each $\Xha_\sigma$ is
invariant for the dynamics.  (In these circumstances we call $\sigma$ a
{\it parity sequence}.)

Let $e$ be the parity sequence with $e(i)=0$ for all $i$, so that
$\Xha_e$ is the set of configurations for which each stack height is even
and there are no adjacent zeros.  For each $\rhohat_e\ge1$ we find an
ETIS state $\muhat_e^{(\rhohat_e)}$ on $\Xha_e$ of density $\rhohat_e$,
which we conjecture to be unique: if $\rhohat_e=1$ then
$\muhat_e^{(\rhohat_e)}$ is the state $\muhattwo$ described above, while
if $\rhohat_e>1$ then $\muhat_e^{(\rhohat_e)}$ is a Gibbs state for an
interaction which is simply a one-body potential together with the
constraints---hard-core and evenness---implicit in $\Xha_e$.  Further,
for each such $\rhohat_e$ we obtain from $\muhat_e^{(\rhohat_e)}$ a
family of regular ETIS states on $\Xh^*$, and show that if the conjecture
mentioned above holds then these are all such states.  Specifically, for
each $\rhohat_e\ge1$ and each ETI state $\lambda$ on $X$ there is an ETIS
state $\muhat^{(\rhohat_e,\lambda)}$ for the SSM;
$\muhat^{(\rhohat_e,\lambda)}$ has the distribution of $\eta+\sigma$
(pointwise addition), where $\eta$ has distribution
$\muhat_e^{(\rhohat_e)}$, $\sigma$ has distribution $\lambda$, and $\eta$
and $\sigma$ are independent.  Note that if $\lambda$ has density
$\kappa$ then $\muhat_e^{(\rhohat_e,\lambda)}$ has density
$\rhohat_e+\kappa$.  The corresponding families for the F-SSEP are
obtained via the map $\Phi_\phi$.

\begin{remark}\label{ulmu} A discussion of the TIS states of the
continuous-time version of the model, generalized to include an asymmetry
in the jumps, was given in \cite{AGLS}.  The asymmetry is controlled by a
parameter $p\in[0,1]$: a particle at site $i\in\bbz$ jumps to site $i+1$
(respectively $i-1$) with rate $p$ (resp.~$1-p$), provided that site
$i-1$ (resp.~$i+1$) is occupied and site $i+1$ (resp.~$i-1$) is empty.
For $\rho<1/2$ the TIS states are, as for the current model, just the TI
states supported on $F$, but for the continuous-time model it was
possible to determine the limiting state $\underline\mu$ when the initial
state $\mu_0$ is Bernoulli; rather surprisingly, $\underline\mu$ is
independent of $p$.  ($\underline\mu$ is also \cite{GLS1,GLS2} the
limiting state, with initial state $\mu_0$, under totally asymmetric
discrete-time dynamics.)  For $\rho=1/2$, the unique TIS state is
supported with equal probability on the two configurations in which
occupied and empty sites alternate.  For each $\rho>1/2$ there is again a
unique TIS state, the Gibbs state for a particle system in which the only
interaction is an exclusion rule forbidding adjacent empty sites; the
uniqueness was established via a coupling of the model with the usual
asymmetric simple exclusion process. \end{remark}

\section{Preliminary considerations\label{prelims}}

We here introduce some further notation and provide some simple results
for the F-SSEP and SSM models, often speaking in terms of the F-SSEP with
the understanding that parallel notation will be used, and similar
results hold, for the SSM.  Let us mention several pieces of general
notation: for any sets $A$ and $B$, function $f:A\to B$, and measure
$\lambda$ on $A$ we let $f_*\lambda$ be the measure on $B$ with
$(f_*\lambda)(C)=\lambda(f^{-1}(C))$; moreover, if $B=\bbr$ we let
$\lambda(f)=\int_Af\,d\lambda$ denote the expected value of $f$ under
$\lambda$.  When $C\subset B$ we let $\ind_C:B\to\{0,1\}$ denote the
indicator function of the set $C$.  If $S$ is a finite set then $|S|$
denotes the size of $S$.

Recall from \csect{intro} that the configuration spaces for these models
are $X:=\{0,1\}^\bbz$ and $\Xh:=\bbz_+^\bbz$, respectively, with
$\eta\in X$ and $\n\in\Xh$ denoting configurations.  For $\eta\in X$ and
$j,k\in\bbz$ with $j\le k$ we let
$\eta\rng{j}{k}=(\eta(i))_{j\le i\le k}$ denote the portion of the
configuration $\eta$ lying between sites $j$ and $k$ (inclusive). We will
occasionally use string notation for configurations or partial
configurations, writing for example
$\eta\rng04=\eta(0)\cdots\eta(4)=01101=01^201$.  $\tau:X\to X$ (or
$\tauhat:\Xh\to\Xh$) denotes the translation operator: if $\eta\in X$
then $(\tau\eta)(i)=\eta(i-1)$, if $f$ is any function on $X$ then
$\tau f(\eta)=f(\tau^{-1}\eta)$, and if $\mu$ is a (Borel) measure on $X$
then $\tau$ acts on $\mu$ via $\tau_*$.

It will sometimes be convenient to associate to each F-SSEP configuration
$\eta\in X$ a height profile $h_\eta:\bbz\to\bbz$, which, in the usual
convention, rises by one unit when $\eta(i)=0$ and sinks by one unit when
$\eta(i)=1$.  Specifically,
 $$h_\eta(k)=\begin{cases}\openup4\jot 0,&\hbox{if $k=0$,}\\
\sum_{i=1}^k(-1)^{\eta(i)},& \hbox{if $k>0$,}\\
-\sum_{i=k+1}^{0}(-1)^{\eta(i)},& \hbox{if $k<0$.}\end{cases}\label{hdef}$$
We do not introduce height profiles for SSM configurations.

From the somewhat informal description of the dynamics of the models
given in \csect{intro} it is straightforward but tedious to specify, for
a configuration $\eta\in X$ and a measurable subset $A\subset X$, the
transition kernel $Q(\eta,A)$ of the F-SSEP Markov process, or similarly
the kernel $\Qh(\n,B)$ for the SSM model.  We omit the details.  A
measure $\mu$ on $X$ is stationary if $\mu=\mu Q$; here
$(\mu Q)(A)=\int_XQ(\cdot,A)\,d\mu$ (we also write
$\mu Q^n:=(\mu Q^{n-1})Q$ for $n\ge2$).

In the remainder of the paper we will consider primarily {\it regular}
(see \csect{intro}) states on $X$ and $\Xh$; the sets of regular TI, ETI,
TIS, and ETIS states for the F-SSEP are denoted by $\M(X)$, $\Mb(X)$,
$\M_s(X)$, and $\Mb_s(X)$, respectively.  We write similarly $\M(\Xh)$,
etc., as well as $\M(A)$, $\M(\hat A)$, etc., for $A\subset X$ or
$\hat A\subset\Xh$ TI sets.  As a consequence of the results of
\capp{subs} (see also the discussion of \csect{intro}) we have
immediately:

\begin{theorem}\label{bijection} There exists a bijection
$\Phi_\phi:\M(\Xh)\to\M(X)$, arising from the substitution map $\phi$
defined in \csect{intro}, which satisfies $\Phi_\phi(\Mb(\Xh))=\Mb(X)$,
$\Phi_\phi(\M_s(\Xh))=\M_s(X)$, and $\Phi_\phi(\Mb_s(\Xh))=\Mb_s(X)$.
$\Phi_\phi$ carries states of density $\rhohat$ to states of density
$\rhohat/(1+\rhohat)$.  Moreover, if $\Ah\subset\Xh$ is TI then there is
a similarly defined bijection from $\M(\Ah)$ to $\M(A)$, etc., where $A$
is the minimal TI subset of $X$ containing $\phi(\Ah)$.\end{theorem}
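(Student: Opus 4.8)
The statement is the one promised in \csect{intro} as a consequence of the constructions of \capp{subs}. The plan is to build $\Phi_\phi$ explicitly by Palm calculus and then reduce the stationarity assertion to a single intertwining identity between the two transition kernels. First I would record that $\phi$ is injective, with image the set of $\eta\in X$ that have $\eta(0)=0$ and infinitely many $0$'s in each direction: the leading $0$'s of the blocks of $\phi(\n)$ are exactly the $0$'s of $\phi(\n)$, and each $\n(i)$ is the number of $1$'s between two consecutive $0$'s. For $\hat\mu\in\M(\Xh)$ put $\rhohat:=\int\n(0)\,d\hat\mu$, finite precisely because $\hat\mu$ is regular, and set
$$\Phi_\phi(\hat\mu)(B):=\frac1{1+\rhohat}\int_{\Xh}\sum_{j=0}^{\n(0)}\ind_B\bigl(\tau^{-j}\phi(\n)\bigr)\,d\hat\mu(\n),$$
the measure obtained from $\phi_*\hat\mu$ by translating the origin to a size-biased uniformly chosen site of the block coding $\n(0)$. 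Standard Palm inversion shows $\Phi_\phi(\hat\mu)$ is translation invariant, and it is regular because every translate of every $\phi(\n)$ has infinitely many $0$'s and so is not the all-$1$'s configuration. The inverse map $\Psi:\M(X)\to\M(\Xh)$ sends $\mu$ to the law, under the Palm measure of the point process of empty sites of $\eta\sim\mu$, of the sequence $\n$ with $\n(i)+1$ equal to the gap between the $i$th and $(i+1)$st empty sites; this is well defined because a regular translation invariant $\mu$ a.s.\ has infinitely many $0$'s in each direction (a standard exhaustion argument from translation invariance and $\mu(\{\text{all }1\text{'s}\})=0$) and the $0$'s have positive intensity, and $\Psi(\mu)$ is regular since $\int\n(0)\,d\Psi(\mu)$ is the mean inter-$0$ gap minus one, which is finite. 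That $\Psi\circ\Phi_\phi$ and $\Phi_\phi\circ\Psi$ are the identity is the Palm inversion formula, and that both maps carry ergodic states to ergodic states --- hence $\Phi_\phi(\Mb(\Xh))=\Mb(X)$ --- is the standard fact that Palm duality preserves ergodicity of the shift.

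For the density statement I would decompose a $\hat\mu$ of density $\rhohat$ into ergodic components, each again supported on $\Xh_{\rhohat}$, so that the normalizing constant above is the same $\rhohat$ for all of them and $\Phi_\phi(\hat\mu)$ is the corresponding mixture of the $\Phi_\phi$ of the components. For an ergodic component $\hat\mu'$ the displayed formula gives $\int_X\eta(0)\,d\Phi_\phi(\hat\mu')=(1+\rhohat)^{-1}\int_{\Xh}\sum_{j=0}^{\n(0)}\phi(\n)(j)\,d\hat\mu'=\rhohat/(1+\rhohat)$, since the block $01^{\n(0)}$ occupying sites $0,\dots,\n(0)$ carries exactly $\n(0)$ particles, and Birkhoff's theorem then shows $\Phi_\phi(\hat\mu')$ is supported on $X_\rho$ with $\rho=\rhohat/(1+\rhohat)$; mixing gives the claim, and the converse is identical.

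The core is the stationarity assertion, which I would derive from the intertwining identity
$$\Phi_\phi(\hat\mu)\,Q=\Phi_\phi(\hat\mu\,\Qh)\qquad(\hat\mu\in\M(\Xh)),$$
$Q$ and $\Qh$ being the F-SSEP and SSM transition kernels (these commute with the shift and preserve regularity --- the latter, for $Q$, via the configuration-level identity below --- so they map $\M(\Xh)$, $\M(X)$ into themselves and both sides make sense). Granting the identity, since $\Phi_\phi$ is a bijection, $\hat\mu=\hat\mu\Qh$ iff $\Phi_\phi(\hat\mu)=\Phi_\phi(\hat\mu)Q$, which is $\Phi_\phi(\M_s(\Xh))=\M_s(X)$; intersecting with $\Phi_\phi(\Mb(\Xh))=\Mb(X)$ gives $\Phi_\phi(\Mb_s(\Xh))=\Mb_s(X)$. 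To prove the identity I would first establish its configuration-level form: for every $\n\in\Xh$ and every choice $\omega$ of the random jump directions at the tall--tall bonds of $\n$, one F-SSEP step applied to $\phi(\n)$, with the collision tie-breaks induced from $\omega$ through the block correspondence, produces $\tau^{c(\n,\omega)}\phi(\n')$, where $\n'$ is the one-step SSM image of $\n$ under $\omega$ and $c(\n,\omega)\in\bbz$ is a shift determined by the net particle flow near the origin. This is the bookkeeping indicated informally in \csect{intro}: in $\phi(\n)$ a block $01^{\n(i)}$ loses its leftmost $1$ to the left and its rightmost $1$ to the right exactly when $\n(i)\ge2$, interior $1$'s have two occupied neighbors and do not move, and the empty site between blocks $i-1$ and $i$ receives a particle from the left iff $\n(i-1)\ge2$ and from the right iff $\n(i)\ge2$, with ties broken by a fair coin --- which is exactly the SSM rule on the bond $\langle i-1,i\rangle$; tracking which empty site becomes which new block boundary then yields $\n'(i)=\n(i)+f_{i-1,i}-f_{i,i+1}$ (with $f$ the signed bond flows) together with the offset $c(\n,\omega)$. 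Since $Q$ commutes with $\tau$, inserting this configuration-level identity into the Palm construction of $\Phi_\phi$ yields the intertwining identity after a short computation.

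Finally, the ``moreover'' is handled the same way. The construction is local and shift-equivariant, so for a translation invariant $\Ah\subset\Xh$ it restricts to a bijection $\M(\Ah)\to\M(A)$ with $A=\bigcup_{k\in\bbz}\tau^k\phi(\Ah)$ the minimal translation invariant set containing $\phi(\Ah)$: one only needs that a regular translation invariant $\mu$ supported on $A$ a.s.\ writes $\eta$ as a translate of $\phi(\n)$ for some $\n\in\Ah$ (true because such an $\eta$ a.s.\ has infinitely many $0$'s in each direction and therefore lies in some $\tau^k\phi(\Ah)$), so that $\Psi(\mu)$ is supported on $\Ah$; the same restriction works for the subclasses $\Mb$, $\M_s$, $\Mb_s$. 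The main obstacle, and the one genuinely laborious part, is the configuration-level identity --- matching block boundaries to bonds, coupling the tie-breaks correctly, and controlling the global offset $c(\n,\omega)$; everything else is standard Palm theory and the ergodic theorem.
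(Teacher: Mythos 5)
Your construction of $\Phi_\phi$ is essentially the paper's: the size-biased re-rooting formula with normalizer $1+\rhohat$ is exactly \eqref{Phi} of \capp{subs} specialized to $\chi(n)=0\,1^n$, and your configuration-level identity (F-SSEP step of $\phi(\n)$ equals $\tau^{c(\n,\omega)}\phi(\n')$ with matched tie-breaks) is precisely the $\tau$-coupling of \cdef{taucouple} and \cthm{QQhat}. The one real divergence is how the intertwining identity is concluded, and there your ``short computation'' is doing the hard work. The random offset $c(\n,\omega)$ interacts with the size-biased choice of root: the root can migrate into a neighboring block when the boundary zero at $K(1)$ moves, and one must show that the joint law of (new gap sequence, new root position within its block) is again (size-biased $\hat\mu\Qh$, uniform). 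The paper's \cthm{equivariant} is structured specifically to avoid this: since $\Phi$, $\Qh$ and $Q$ all preserve ergodicity (the last two being \clem{QQhatergodic}, which your route would not obviously need), $\Phi(\nuhat\Qh^n)$ and $\Phi(\nuhat)Q^n$ are ergodic, hence equal or mutually singular, and the coupling is used only to exhibit a common nonzero minorant on the event that the offset equals a fixed $q$. Your direct route can be completed --- e.g.\ by identifying the Palm measure of $\Phi_\phi(\hat\mu)Q$ at the conserved point process of zeros as $\hat\mu\Qh$ and invoking uniqueness of Palm inversion --- but it is not short, and you should either carry it out in full or adopt the ergodicity trick.

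There is also one genuine error: you deduce $\Phi_\phi(\Mb_s(\Xh))=\Mb_s(X)$ by ``intersecting'' $\Phi_\phi(\M_s(\Xh))=\M_s(X)$ with $\Phi_\phi(\Mb(\Xh))=\Mb(X)$. But $\Mb_s$ denotes the extreme points of the convex set $\M_s$, not $\Mb\cap\M_s$; the paper emphasizes that ETIS states need not be ergodic, and \crem{ergodicity}(c) exhibits non-ergodic ETIS states. Intersecting therefore only shows that the ergodic stationary states correspond, which is a different (and smaller) class. The correct argument is the one implicit in the proof of \cthm{Phithm}: $\Phi_\phi$ and its inverse carry convex combinations to convex combinations with reweighted coefficients $c'_\alpha=Z_{\nuhat_\alpha}c_\alpha/Z_{\nuhat}$, so a decomposition on one side is proper iff the corresponding decomposition on the other side is; hence the bijection $\M_s(\Xh)\to\M_s(X)$ restricts to a bijection of extreme points.
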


\begin{remark}\label{finite} It is clear that one may also define, in a
straightforward way, models with a fixed number of particles moving on a
finite ring under either the F-SSEP or SSM dynamics.  These models will
play a role in \csects{rhohalf}{stkmeas}.  \end{remark}

Since we are studying stationary states it is natural to introduce the
set of space-time F-SSEP configurations
$X_2=\{0,1\}^{\bbz^2}=\{(\xi_t(i))_{(t,i)\in\bbz^2}\}$; a state
$\mu\in\M(X)$ which is stationary for the dynamics induces a ``path
measure'' on $X_2$, invariant under vertical translation, which we denote
$P_\mu$.  $\Xh_2$ and $\Ph_{\muhat}$ denote the corresponding SSM quantities.

In \csectt{lowrho}{rhobig} we will describe all ETIS states for the two
models.  As indicated in \csect{intro}, these fall into certain natural
groups, which we will call $\lambda$-families.  In this context we write
$\L$ for the set of ergodic TI measures on $X$ (in fact, $\L=\Mb(X)$, but
the special role that this space plays here motivates a special symbol).  

\begin{definition}\label{lamfam} A {\it$\lambda$-family} is a collection
of ETIS states, for either the SSM or the F-SSEP, which is bijectively
equivalent (with a ``natural'' bijection) to $\L$.  We think of $\L$ as
indexing the $\lambda$-family and let $\lambda\in\L$ denote a typical
index.  We will typically write $\Fch_*$ and $\Fc_*$ for
$\lambda$-families for the SSM and F-SSEP, respectively, with $*$ a
subscript distinguishing the various families and with
$\Fc_*=\Phi_\phi(\Fch_*)$.  $\Psih_*:\L\to\Fch_*$ and
$\Psi_*=\Phi_\phi\circ\Psih_*:\L\to\Fc_*$ are the corresponding indexing
bijections.\end{definition}

Certain simple spatially-periodic configurations and related states,
some already mentioned in \csect{intro}, will play a special role in our
discussions.  Let $\n^{(1)},\n^{(2)}\in\Xh$ be the configurations with
$\n^{(1)}(i)=\1$ and $\n^{(2)}(i)=(\1+(-\1)^i)$ for all $i$; in
\csect{intro} we introduced the states
$\muhat^{(1)},\muhat^{(2)}\in\M(\Xh)$ defined by
$\muhat^{(1)}=\delta_{\n^{(1)}}$,
$\muhat^{(2)}=(\delta_{\n^{(2)}}+\delta_{\tau\n^{(2)}})/2$.  The
corresponding states $\mu^{(1)}=\Phi_\phi(\mu^{(1)})$,
$\mu^{(2)}=\Phi_\phi(\mu^{(2)})$ are given by
 \be\label{mu12}
 \mu^{(1)}=\frac12\sum_{j=0}^1\delta_{\tau^j\eta^{(1)}}
  \qquad\text{and}\qquad
  \mu^{(2)}=\frac14\sum_{j=0}^3\delta_{\tau^j\eta^{(2)}},
 \ee
 where $\eta^{(1)}\in X$ is the period-two configuration with
$\eta^{(1)}(1{:}2)=10$ and $\eta^{(2)}\in X$ is the period-four
configuration with $\eta^{(2)}(1{:}4)=1100$.  It is easy to check
directly that $\muhat^{(1)}$ and $\muhat^{(2)}$ are ETIS states for the
SSM, as are $\mu^{(1)}$ and $\mu^{(2)}$ for the F-SSEP.

We conclude this section with three general results; we state these for the
F-SSEP, but the obvious translations to the SSM also hold.  Recall from
\csect{intro} that we say that a state $\mu\in\M(X)$ has density $\rho$
if it is supported on the space $X_\rho$ (see \eqref{rhodef}).

\begin{lemma}\label{densities} Every ETIS state
$\mu\in\Mb_s(X)$ has a definite density $\rho$ and satisfies either
$\mu(F)=0$ or $\mu(F)=1$.  \end{lemma}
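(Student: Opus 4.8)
The plan is to handle both assertions with a single device. Call a measurable set $A\subseteq X$ \emph{good} (relative to the fixed $\mu\in\Mb_s(X)$) if $A$ is translation invariant, $Q(\eta,A)=1$ for $\mu$-a.e.\ $\eta\in A$, and $Q(\eta,A)=0$ for $\mu$-a.e.\ $\eta\in A^c$. For a good $A$ the sub-probability measures $\mu_A:=\mu(\,\cdot\,\cap A)$ and $\mu_{A^c}:=\mu(\,\cdot\,\cap A^c)$ are each translation invariant and stationary: translation invariance is immediate since $A$ and $A^c$ are $\tau$-invariant sets and $\mu$ is TI, while a short kernel computation using the two defining properties of a good set together with $\mu Q=\mu$ gives $\mu_A Q=\mu_A$, and then $\mu_{A^c}Q=\mu Q-\mu_A Q=\mu-\mu_A=\mu_{A^c}$; both are also regular because $\mu$ is. Consequently, if $0<\mu(A)<1$, then $\mu=\mu(A)\bigl(\mu_A/\mu(A)\bigr)+\mu(A^c)\bigl(\mu_{A^c}/\mu(A^c)\bigr)$ exhibits $\mu$ as a nontrivial convex combination of two distinct regular TIS states, contradicting $\mu\in\Mb_s(X)$. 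So any good set has $\mu$-measure $0$ or $1$, and it remains to produce good sets that witness the two conclusions.

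For the dichotomy $\mu(F)\in\{0,1\}$: the set $F$ is plainly translation invariant, and because a frozen configuration admits no jumps we have $Q(\eta,\cdot)=\delta_\eta$ for every $\eta\in F$, so $Q(\eta,F)=1$ on $F$. The remaining requirement, $Q(\eta,F)=0$ for $\mu$-a.e.\ $\eta\in F^c$, is \emph{false} pointwise—a configuration containing the local pattern $\dots0110\dots$ evolves in one step to one containing $\dots1001\dots$ and so can enter $F$—and this is exactly where stationarity enters. From $Q(\eta,\cdot)=\delta_\eta$ on $F$ one gets $\mu_F Q=\mu_F$, hence $\mu_{F^c}Q=\mu-\mu_F=\mu_{F^c}$; since $\mu_{F^c}$ is concentrated on $F^c$ this forces $0=\mu_{F^c}(F)=(\mu_{F^c}Q)(F)=\int_{F^c}Q(\eta,F)\,\mu(d\eta)$, so $Q(\eta,F)=0$ for $\mu$-a.e.\ $\eta\in F^c$. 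Thus $F$ is good, and $\mu(F)\in\{0,1\}$.

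For the existence of a density: set $D(\eta):=\limsup_{N\to\infty}\frac1N\sum_{i=1}^N\eta(i)$, which is translation invariant since shifting the summation range alters the partial sum by at most $1$. In one step of the dynamics at most one particle crosses any given bond, so $\bigl|\sum_{i=1}^N\eta'(i)-\sum_{i=1}^N\eta(i)\bigr|\le2$ for $Q(\eta,\cdot)$-a.e.\ $\eta'$, whence $Q(\eta,\{D=D(\eta)\})=1$ for every $\eta$. It follows that each level set $A_c:=\{D\le c\}$ is translation invariant and satisfies $Q(\eta,A_c)=1$ on $A_c$ and $Q(\eta,A_c)=0$ on $A_c^c$, i.e.\ $A_c$ is good; hence $\mu(A_c)\in\{0,1\}$ for every $c$, which means $D=\rho$ $\mu$-a.s.\ for a constant $\rho\in[0,1]$. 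To pass from this $\limsup$ to the two-sided limit required by \eqref{rhodef}, apply Birkhoff's pointwise ergodic theorem to the measure-preserving transformations $\tau$ and $\tau^{-1}$ on $(X,\mu)$ with the integrable observable $\eta\mapsto\eta(0)$: both $\frac1N\sum_{i=1}^N\eta(i)$ and $\frac1N\sum_{i=-N}^{-1}\eta(i)$ converge $\mu$-a.s.\ to $\emu{\eta(0)\mid\mathcal I}$, where $\mathcal I$ is the $\tau$-invariant $\sigma$-algebra. Since the first of these limits equals $D=\rho$ a.s., both averages converge a.s.\ to $\rho$, so $\mu(X_\rho)=1$.

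I expect the one genuinely delicate point to be the middle step of the $F$-dichotomy: one must not claim that $F$ is forward-invariant for the dynamics on the nose, but instead extract the $\mu$-a.e.\ version of ``backward invariance'' of $F$ from stationarity via the identity $\mu_{F^c}=\mu-\mu_F$. Everything else—the kernel identities defining a good set, the $O(1)$ stability of the partial sums under the dynamics, and the appeal to Birkhoff—is routine. The same argument applies with only notational changes to the SSM, with $F$ replaced by $\Fh$ and regularity of $\mu$ (finiteness of the expected stack height $\emu{\n(0)}$ in the SSM case) ensuring the observable is integrable so that Birkhoff applies.
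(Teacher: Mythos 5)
Your proof is correct, and it reaches the same two pillars as the paper's argument---that the particle density is conserved by the dynamics and that $F$ is dynamically invariant---but it packages them differently. The paper proves the stronger statement that every $\mu\in\M_s(X)$ decomposes as a convex combination of TIS states of definite density with $F$-probability $0$ or $1$: it disintegrates $\mu$ over the value of the two-sided density $r(\eta)$ (defined a.e.\ by the ergodic theorem) and then splits each conditional measure into its $F$ and $X\setminus F$ parts, deferring the verification that the pieces are stationary to \cite{AGLS}. You instead work directly with the extremal state and prove a zero--one law for ``good'' sets (translation invariant and $\mu$-a.e.\ forward invariant together with their complements), which is a cleaner route to the dichotomy and avoids regular conditional probabilities. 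Two of your details are genuine added value over the paper's sketch: the derivation of $Q(\eta,F)=0$ for $\mu$-a.e.\ $\eta\in F^c$ from stationarity via $\mu_{F^c}Q=\mu_{F^c}$ (forward invariance of $F^c$ is indeed not a pointwise fact), and the explicit verification that at most one particle crosses each bond per step, so the partial sums move by $O(1)$ and $D$ is a dynamical invariant. The final appeal to Birkhoff for both $\tau$ and $\tau^{-1}$ to upgrade the one-sided $\limsup$ to the two-sided limit in \eqref{rhodef} is also correct, since the forward and backward averages of an invertible measure-preserving map converge a.s.\ to the same invariant conditional expectation.
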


\begin{proof} Take $\mu\in \M_s(X)$; it suffices to show that $\mu$ is a
convex combination of states in $\M_s(X)$ with a definite density $\rho$
and for which $F$ has probability 0 or 1. Let $\nu=r_*\mu$; here
$r:X\to\bbr$,
$r(\eta):=\lim_{N\to\infty}(2N+1)^{-1}\sum_{i=-N}^N\eta(i)$, is defined
$\mu$-a.e.~by the ergodic theorem.  $\nu$ is just the distribution of the
density with respect to $\mu$.  Then \cite{Kallenberg} there exists a
unique regular conditional probability distribution
$(\mu_\rho)_{\rho\in[0,1]}$ for $\mu$ such that $\mu_\rho$ has density
$\rho$ and for any measurable $A\subset X$,
 \be
\mu(A)=\int_{0\le \rho\le 1}\mu_\rho(A)d\nu(\rho).
 \ee
 Since $F$ is invariant under the dynamics, i.e., $Q(\eta,F)=1$ for
$\eta\in F$, we see that if we further write
$\mu_\rho=\mu_\rho\big|_F+\mu_\rho\big|_{X\setminus F}$ we obtain, after
normalization of $\mu_\rho\big|_F$ and $\mu_\rho\big|_{X\setminus F}$,
the desired representation.  More details are given in \cite{AGLS}.
\end{proof}

 We next give a lemma which shows that a connected portion of an F-SSEP
configuration, other than possibly a single 1 at some site, cannot be
frozen unless the entire configuration is.

\begin{lemma}\label{frpart} Let $\mu\in\M_s(X)$ satisfy $\mu(F)=0$.
Suppose that $I\subset\bbz$ is an interval, that $\theta\in\{0,1\}^I$,
and that either $|I|\ge2$ or $I=\{i\}$ and $\theta(i)=0$.  Then
$P_\mu(\{\xi\in X_2\mid\text{for all $t$,
}\xi_t\big|_I=\theta\})=0$.  \end{lemma}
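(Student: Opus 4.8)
Put $E := \{\xi\in X_2 : \xi_t\big|_I=\theta\text{ for all }t\}$; since the time-$0$ marginal of $P_\mu$ is $\mu$, it suffices to show that, off a $P_\mu$-null set, every $\xi\in E$ has $\xi_0\in F$, for then $P_\mu(E)\le\mu(F)=0$. I first reduce $\theta$ to a convenient form. If some three consecutive sites of $I$ carry the pattern $110$ or $011$, then at every time the middle particle of this triple is forced to jump onto its adjacent empty site, which is then occupied at the next step (a competing jump from the far side only helps to fill it); so $\xi_{t+1}$ cannot agree with $\theta$ on $I$ and $E=\emptyset$. If $\theta$ contains two adjacent $1$'s but no such triple lies in $I$, a one-step induction along $I$ forces $\theta\equiv1$ on $I$; in that case, whenever the site $a-1$ just left of the left endpoint $a$ of $I$ is empty the particle at $a$ jumps left and is not replaced, contradicting $\xi\in E$, so on $E$ we have $\xi_t(a-1)=1$ for all $t$, and iterating on both ends gives $E\subseteq\{\xi:\xi_0\equiv1\}$, a $\mu$-null set since $\mu$ is regular. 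In the remaining case $\theta$ has no two adjacent $1$'s on $I$ (call such a pattern \emph{$F$-type}); and if $|I|\ge2$ and an endpoint of $I$ carries a $1$ --- so its neighbour inside $I$ carries a $0$, as $\theta$ is $F$-type --- the same ``a jump out of an endpoint is never cancelled'' argument shows that on $E$ the outside neighbour of that endpoint carries a $0$, letting us enlarge $I$ there by one site. After these reductions we may assume $I=[a,b]$ with $\theta$ $F$-type and $\theta(a)=\theta(b)=0$.

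Now the heart of the argument. Because $\theta(a)=\theta(b)=0$, a particle jumping onto $a$ or onto $b$ would leave it occupied, so on $E$ no particle ever jumps onto $a$ or $b$; and since those sites are always empty, no particle ever crosses the bonds $\langle a-1,a\rangle$ or $\langle b,b+1\rangle$, and the half-line $L=(-\infty,a-1]$ evolves autonomously on $E$. Let $M_t:=\sup\{i\le a-1:\xi_t(i)=1\}\in\{-\infty\}\cup(-\infty,a-1]$ be the location of the rightmost particle of $L$. On $E$ the sequence $(M_t)_t$ is non-decreasing: the rightmost particle of $L$ has an empty right neighbour, so it either stays put or (if its left neighbour is occupied) is forced one step rightward, it cannot reach $a$, and no particle further left can overtake it since particles move at most one site per step. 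As $E$ is invariant under the time shift and $P_\mu$ is time-shift invariant, $M_t$ has a law independent of $t$ on $E$; a monotone process with a shift-invariant law is a.s.\ constant, so $M_t\equiv M$ for all $t$, $P_\mu$-a.e.\ on $E$. Reading off the dynamics, $M_{t+1}=M_t$ forces the site just left of $M$ to be empty at every time; hence on $E$ (a.e.) the window $\{M-1\}\cup[M,b]$, strictly larger than $I$, is frozen to an $F$-type pattern extending $\theta$ --- and if instead $L$ is empty at all times, then $\xi_t\big|_L\equiv0$ already. The bond $\langle M-1,M\rangle$ also carries no flux, so the argument repeats on the autonomous half-line $(-\infty,M-1]$, with the occupied site $M$ and empty site $M-1$ now playing the role of the wall; iterating, with the left endpoint dropping by at least two each time, we obtain a full-measure subset of $E$ on which $\xi_t\big|_{(-\infty,b]}$ is $F$-type for every $t$. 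The mirror argument on the right gives $\xi_t\big|_{[a,\infty)}$ $F$-type for every $t$ on a full-measure subset of $E$; intersecting, $\xi_t\in F$ for all $t$, in particular $\xi_0\in F$, on a full-measure subset of $E$, so $P_\mu(E)\le\mu(F)=0$.

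The delicate point is this monotonicity-plus-stationarity step --- verifying that, once the flux across the boundary bonds of $I$ is killed, the rightmost particle of $L$ can only drift toward $I$ and never reach it, and then pushing ``a monotone process with a shift-invariant law is constant'' through the entire countable cascade of successive rightmost particles so as to freeze the whole half-line. The various boundary sub-cases ($I$ a single site, endpoints equal to $1$, a possible collision at $a$ coming from the right, an empty half-line) are routine, each reducing to the remark that any jump onto $a$ or $b$ immediately contradicts membership in $E$.
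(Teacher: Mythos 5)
Your main-case argument (after reduction to a pattern with no adjacent $1$'s and with both endpoints of $I$ carrying a $0$) is correct and genuinely different from the paper's. The paper extends the frozen window one site at a time, splitting on the pattern at the boundary of $I$ ($\cdots0$, $\cdots01$, $\cdots11$) and showing in each case that the adjacent outside site is eventually, hence by time-stationarity always, constant; only the $11$ case requires randomness. Your route---killing the flux across $\langle a-1,a\rangle$ and $\langle b,b+1\rangle$ and then freezing each half-line wholesale via ``a non-decreasing process with a time-shift-invariant law is a.s.\ constant,'' cascaded over the successive rightmost particles---is sound: the monotonicity of $M_t$ is pathwise on $E$ (when the rightmost particle is forced rightward, the target site and the site beyond it are both empty, so there is no competing jump and the move succeeds), the iteration discards only countably many null sets, and the terminal configuration on $(-\infty,b]$ has no adjacent occupied sites, so $\xi_0\in F$ a.e.\ on $E$ and $P_\mu(E)\le\mu(F)=0$.

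The gap is in the sub-case $\theta\equiv1$ on $I$ with $|I|\ge2$. You assert that whenever $\xi_t(a-1)=0$ ``the particle at $a$ jumps left and is not replaced.'' That is false: if also $\xi_t(a-3)=\xi_t(a-2)=1$, the particle at $a-2$ attempts the same jump into $a-1$, and if it wins the coin flip the particle at $a$ stays put (site $a$ is occupied, so nothing needs to refill it), giving $\xi_{t+1}(a)=1$ and no contradiction with $\xi\in E$; the same problem occurs at the right end of $I$. So this case does not reduce to ``a jump onto a site of $I$ immediately contradicts $E$,'' and it cannot be made deterministic---it is exactly the point at which the paper's proof needs its only probabilistic ingredient (the independent events $L_k$, each of probability $1/2$). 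The repair is standard: each time $\xi_t(a-1)=0$ on $E$, a fresh fair coin flip must come out one specific way, so a.s.\ this happens only finitely often; since $E$ is invariant under the time shift and $P_\mu$ is time-shift invariant, ``$\xi_t(a-1)=1$ eventually'' then upgrades to ``$\xi_t(a-1)=1$ for all $t$'' a.s.\ on $E$, after which your outward extension proceeds and regularity of $\mu$ finishes the all-ones case. Without this Borel--Cantelli-plus-stationarity step that branch of your proof is not closed.
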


We remark that the possibility $I=\bbz$, $\theta(i)=1$ for all $i$, an
apparent counterexample to \clem{frpart}, is forbidden by the regularity
of $\mu$.

\begin{proofof}{\clem{frpart}} Let $A,B,C\subset X_2$ denote the sets of
space-time histories $\xi$ such that, respectively, $\xi_t(0)=0$ for all
$t$ (we write $\xi_t(0)\equiv0$), $\xi_t\rng{-1}0\equiv01$, and
$\xi_t\rng{-1}0\equiv11$, and such that in each case $\xi_t(1)$ changes
infinitely often as $t\nearrow\infty$.  We show that
$P_\mu(A)=P_\mu(B)=P_\mu(C)=0$; using the translation invariance of
$\mu$, translation invariance in time of $P_\mu$, and reflection
invariance of the system (although the latter is not really needed), one
sees easily that this implies the result.

 First, we observe that if $\xi\in X_2$ satisfies $\xi_t(0)\equiv0$ and
$s$ is such that $\xi_s(1)=1$ then, $P_\mu$-a.s., $\xi_t(1)=1$
for all $t\ge s$ and so $\xi\notin A$.  Thus $P_\mu(A)=0$.  Next, suppose
that $\xi_t\rng{-1}0\equiv01$; if $\xi_s(1)=1$ for some $s$ then,
$P_\mu$-a.s., $\xi_{s+1}(-1)=1$, a contradiction. Thus
$P_\mu(\xi\in B)=0$.  Finally, if $\xi\in C$ we define
$(\tau_k)_{k\in\bbz_+}$, with $0\le\tau_0<\tau_1<\cdots$, to be the
nonnegative times satisfying $\xi_{\tau_k}(1)=0$; the $\tau_k$ are well
defined on $C$.  Now $\xi\in C$ implies that $\xi_{\tau_k+1}(0)=1$, and
this is possible only if $\xi_{\tau_k}\rng{-1}3=11011$ and, moreover,
$L_k$ occurs, where $L_k$ is the event that when two particles attempt at
time $\tau_k$ to jump to site $1$, it is the leftward jump which
succeeds.  Given $C$, the $L_k$ are independent events, each with
probability 1/2, so that $P_\mu(C)=0$.\end{proofof}

Finally, we give a result showing that non-frozen TIS states cannot have
too large a local density of 0's.
 
\begin{lemma}\label{noincrease} Suppose that $\mu\in\M_s(X)$ satisfies
$\mu(F)=0$.  Then:
 \par\smallskip\noindent
 (a)  $\mu$-a.s., no configuration contains three consecutive 0's.
 \par\smallskip\noindent
 (b) $\mu$-a.s., the height profile $h_\eta$ of the configuration
$\eta$ does not increase by more than two units over any interval.
\end{lemma}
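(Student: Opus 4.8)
The plan is to show that a state with $\mu(F)=0$ cannot contain certain ``too empty'' local patterns: the occurrence of such a pattern at a fixed location will force, $P_\mu$-almost surely, some bounded interval of sites to stay frozen for all time, contradicting \clem{frpart}. Part (a) is one clean instance of this, and part (b) reduces, via the combinatorics of the height profile, to a whole family of such instances proved by induction.

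For (a), suppose $\mu(G)>0$, where $G=\{\eta\in X\mid\eta(0)=\eta(1)=\eta(2)=0\}$ (translation invariance lets us fix the three sites). I would first show that $G$ is \emph{backward invariant}: $P_\mu$-a.s., if $\xi_t\in G$ then $\xi_{t-1}\in G$. The observation is that a particle moves at most one site per step. A particle at site $1$ at time $t-1$ would therefore occupy one of the sites $0,1,2$ at time $t$, contradicting $\xi_t\in G$; hence $\xi_{t-1}(1)=0$. Given that, a particle at site $0$ at time $t-1$ has its right neighbor (site $1$) empty, so it cannot jump left, and thus occupies site $0$ or site $1$ at time $t$ --- again a contradiction; so $\xi_{t-1}(0)=0$, and symmetrically $\xi_{t-1}(2)=0$. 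Now let $S$ be the one-step time shift on $X_2$ and $E=\{\xi\mid\xi_t\in G\text{ for all }t\le0\}$. Backward invariance gives $\{\xi_0\in G\}\subseteq E$ modulo a null set, so $P_\mu(E)=\mu(G)>0$, while $S^{-1}E\subseteq E$ (if $\xi_{t+1}\in G$ for all $t\le0$ then $\xi_s\in G$ for all $s\le0$). Since $P_\mu$ is $S$-invariant, the decreasing sets $S^{-n}E$ all have measure $\mu(G)$, so $P_\mu\bigl(\bigcap_{n\ge0}S^{-n}E\bigr)=\mu(G)>0$. But $\bigcap_{n\ge0}S^{-n}E=\{\xi\mid\xi_t\in G\text{ for all }t\in\bbz\}\subseteq\{\xi\mid\xi_t(1)=0\text{ for all }t\}$, which is $P_\mu$-null by \clem{frpart} (with $I=\{1\}$, $\theta(1)=0$). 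This contradiction yields $\mu(G)=0$.

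For (b), note that $h_\eta(k)-h_\eta(j)$ is the number of $0$'s minus the number of $1$'s of $\eta$ in $(j,k]$, so (b) asserts that $\mu$-a.s.\ no finite window of $\eta$ has three or more excess $0$'s. A short deterministic argument about $\pm1$ paths shows that such a window forces $\eta$ to contain one of the patterns
\[
\pi_n=00(10)^n0,\qquad n\ge0
\]
(with $\pi_0=000$): by a standard extraction one gets $a<b$ with $h_\eta(b)=h_\eta(a)+3$ and $h_\eta(c)\in\{h_\eta(a)+1,h_\eta(a)+2\}$ for $a<c<b$, and then the path, being squeezed between two levels, forces $\eta$ on $[a{+}1,b]$ to be exactly $\pi_n$ with $n=(b-a-3)/2$. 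So it suffices to prove, by induction on $n$ (the base case $n=0$ being part (a)), that $\mu$-a.s.\ $\pi_n$ does not occur. The block $\pi_n$ on $[1,2n{+}3]$ is very rigid: its particles sit at the odd sites $3,5,\dots,2n{+}1$, each flanked by empty sites, so none ever attempts a jump, and the even interior sites cannot be invaded from within the block; thus one backward step can disturb the block only through what enters at the two boundary sites $1$ and $2n{+}3$. I would use this to establish the \emph{dichotomy}: $P_\mu$-a.s., if $\xi_t\big|_{[1,2n+3]}=\pi_n$, then either $\xi_{t-1}\big|_{[1,2n+3]}=\pi_n$, or $\xi_{t-1}$ contains $\pi_m$ somewhere for some $m<n$. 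Granting the dichotomy, the argument closes exactly as in (a): on $\{\xi:\xi_0\big|_{[1,2n+3]}=\pi_n\}$ let $\tau\le0$ be the last time at which $\xi_\tau\big|_{[1,2n+3]}\ne\pi_n$; if $\tau>-\infty$ then $\xi_\tau$ contains some $\pi_m$, $m<n$ --- an event of probability $0$ by the induction hypothesis and stationarity (summing over the countably many possible values of $\tau$); and if $\tau=-\infty$ then, extending over all times by the shift argument of (a), $\xi_t\big|_{[1,2n+3]}=\pi_n$ for every $t$, so $[2,2n{+}2]$ is frozen forever, contradicting \clem{frpart}. Hence $\mu(\{\pi_n\text{ occurs}\})=0$, and (b) follows by taking the (countable) union over $n$.

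The one substantial step is the dichotomy --- the single backward-step analysis of a $\pi_n$ block. The easy case is that every particle of the block was already in place and stayed put, so $\xi_{t-1}$ still equals $\pi_n$ on $[1,2n{+}3]$; the delicate case is when a particle ``came in from the side.'' There one must verify that the resulting earlier configuration, although no longer equal to $\pi_n$ on $[1,2n{+}3]$, necessarily contains a \emph{strictly shorter} pattern $\pi_m$ --- it may well produce, say, a run of three adjacent $1$'s, which is not itself excluded --- and one must carry out this bookkeeping uniformly in $n$, in particular ruling out the possibility that a $\pi_n$ block is repeatedly ``refreshed'' from its ends without ever leaving a frozen trace. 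I expect this to be the bulk of the work; part (a) and the height-profile reduction in (b) are routine by comparison.
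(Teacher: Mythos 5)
Your proof follows essentially the same route as the paper's: part (a) via backward invariance of the pattern $000$ together with \clem{frpart}, and part (b) by reducing a height rise of three to the rigid pattern $00(10)^n0$ and then arguing by induction on $n$ (the paper phrases it as a minimal counterexample in the length $j$) that such a pattern must have been created at some finite time from a configuration containing a strictly shorter pattern of the same form. The one step you defer---the case analysis showing that every predecessor of $00(10)^n0$, other than the pattern itself in place, contains a strictly shorter $00(10)^m0$---is exactly the step the paper itself dispatches as ``elementary analysis'' with a worked example at $j=9$, so your sketch sits at essentially the paper's own level of detail.
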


\begin{proof} (a) Elementary analysis of the dynamics shows that,
$P_\mu$-a.s., if $\xi\in X_2$ satisfies
$\xi_t(i)=\xi_t(i+1)=\xi_t(i+2)=0$ for some $t,i\in\bbz$, then also
$\xi_{t-1}(i)=\xi_{t-1}(i+1)=\xi_{t-1}(i+2)=0$.  But then
$\xi_s(i)=\xi_s(i+1)=\xi_s(i+2)=0$ for all $s\le t$ and, by the
invariance of $P_\mu$ under time translations, for all $s$.  The
conclusion follows from \clem{frpart}.

 \smallskip\noindent (b) Suppose that the conclusion is false.  By
translation invariance we may assume that for some (necessarily odd)
positive integer $j$, $\mu(A_j)>0$, where $A_j=\{\eta\mid h_\eta(j)=3\}$.
We take $j$ to be the minimal value for which this holds.  By (a), $j>3$,
and minimality of $j$ implies that if $\eta\in A_j$ then
a.s.~$\eta\rng1j=00(10)^{(j-3)/2}0$.  Now by \clem{frpart} (as in (a)),
$P_\mu(B)>0$, where
$B=\{\xi\in X_2\mid \xi_0\in A_j,\xi_{-1}\notin A_j\}$.  On the other
hand, if $\xi\in B$ then elementary analysis shows that, $P_\mu$-a.s.,
some translate of $\xi_{-1}$ belongs to $A_{j'}$ for some $j'<j$ (for
example, if $j=9$ then $\xi_{-1}\rng1j$ must be one of $1100(10)^20$,
$00(10)^2011$, $110010011$, or $000111000$).  This contradicts the
minimality of $j$.  \end{proof}

\section{Low density\label{lowrho}}

In this section we describe all TIS states of low density ($\rho<1/2$ for
the F-SSEP, $\rhohat<1$ for the SSM).  We first show that all ETIS states
for the F-SSEP are frozen.

\begin{lemma}\label{smallrho} Let $\mu\in\Mb_s(X)$ have density
$\rho<1/2$.  Then $\mu(F)=1$.  \end{lemma}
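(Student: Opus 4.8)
The plan is to argue by contradiction: suppose $\mu\in\Mb_s(X)$ has density $\rho<1/2$ but $\mu(F)<1$. Since $\mu$ is extremal among TIS states, \clem{densities} forces $\mu(F)=0$, so we may apply \clem{noincrease}. The idea is to extract a quantitative tension between the ``no three consecutive 0's'' restriction from \clem{noincrease}(a) and the requirement that the density be strictly below $1/2$. Heuristically, a configuration with density $<1/2$ must contain ``too many'' 0's, and \clem{noincrease} says the 0's cannot bunch up; but isolated or paired 0's sitting between blocks of 1's give density at least close to $1/2$ only if the 1-blocks are short, and long 1-blocks would be needed to push the density down — yet a long block of 1's is itself nearly frozen in a way \clem{frpart} forbids.

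Concretely, I would use the height-profile language. By \clem{noincrease}(b), $\mu$-a.s.\ $h_\eta$ never increases by more than two units over any interval; equivalently, for all $j\le k$, $h_\eta(k)-h_\eta(j)\ge -2$ would be the reflected statement, so here $h_\eta(k)-h_\eta(j)\le 2$. Now the density condition \eqref{rhodef} with $\rho<1/2$ says that $\sum_{i=1}^N(-1)^{\eta(i)}=N-2\sum_{i=1}^N\eta(i)\to+\infty$ like $(1-2\rho)N$, i.e.\ $h_\eta(N)\to+\infty$ (and similarly $h_\eta(-N)\to+\infty$). So along almost every configuration the height profile tends to $+\infty$ in both directions; in particular it attains a global minimum at some site $m=m(\eta)$. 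At a global minimum we have $h_\eta(m)\le h_\eta(m-1)$ and $h_\eta(m)\le h_\eta(m+1)$, which in the height convention forces $\eta(m)=1$ and $\eta(m+1)=0$ — a ``valley bottom'' of the form $\cdots 1\,0\cdots$ read left to right, i.e.\ a descent immediately followed by an ascent. The key point is that the global minimum is \emph{unique} up to the obvious local ambiguity, because once the profile has gone up by 2 it can never come back down to the minimum (that would be a later decrease of 2 followed by... — more precisely, \clem{noincrease}(b) applied on both sides of $m$ shows the profile stays within bounded distance of its minimum only on a bounded set, and strictly exceeds it outside).

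This gives a translation-invariant way to select a (random) site — the location of the global minimum of $h_\eta$ — which is a contradiction with translation invariance of $\mu$: a TI measure cannot $\mu$-a.s.\ distinguish a single site (the orbit of a point under $\ZZ$ is infinite, so no TI probability measure can concentrate on configurations each having a well-defined ``special'' site). I would make this precise by the standard argument: if $\Sigma\subset X$ is the (measurable, $\mu$-conull) set of $\eta$ for which $h_\eta\to+\infty$ in both directions, and $g:\Sigma\to\ZZ$ picks out the minimizing site (made single-valued by a tie-breaking rule), then $g(\tau\eta)=g(\eta)+1$, so the pushforward $g_*\mu$ on $\ZZ$ is translation invariant, hence not a probability measure — contradiction. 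Therefore $\mu(F)=0$ is impossible and $\mu(F)=1$.

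The main obstacle I anticipate is making the ``unique global minimum'' claim airtight — i.e.\ showing from \clem{noincrease}(b) alone that $h_\eta$ genuinely attains its infimum and that the set of minimizing sites is a.s.\ finite (indeed of size $1$ or $2$) rather than, say, the profile oscillating and hitting its minimum infinitely often. One clean way around this: don't insist on the strict minimum but instead count. For large $N$, among sites $-N\le i\le N$ the profile ranges over an interval of length $\sim 2(1-2\rho)N$, and by \clem{noincrease}(b) each value in the ``lower half'' of that range is taken on a set of sites of bounded diameter; combined with \clem{frpart} applied to the implied long runs this yields the needed contradiction more robustly. I would present the argument in whichever of these two forms turns out to be shortest, but the height-profile-to-a-special-site route is the conceptual heart.
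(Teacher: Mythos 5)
There is a genuine error at the pivot of your argument: the claim that $h_\eta(-N)\to+\infty$ ``similarly.''  For a configuration of density $\rho<1/2$ the height profile has average slope $1-2\rho>0$ \emph{uniformly in $k$}: from the definition in \csect{prelims}, $h_\eta(k)\approx(1-2\rho)k$ for $k>0$ \emph{and} for $k<0$, so $h_\eta(-N)\to-\infty$, not $+\infty$.  The profile is (on average) monotone increasing across $\bbz$, not a valley, so it attains no global minimum, and the entire ``translation-invariantly selected special site'' construction — which you describe as the conceptual heart of the proof — has nothing to stand on.  The same confusion infects your fallback counting argument, which still presupposes that low values of the profile occur in a bounded central region with high values on both sides.  (The special-site argument also could not be the right mechanism for another reason: as you set it up, it uses only the density condition and never uses $\mu(F)=0$ or \clem{noincrease}, so if it worked it would rule out \emph{every} TI state of density $\rho<1/2$, which is false — the frozen states of \csect{lowrho} are counterexamples.)

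The ironic part is that a complete proof is already sitting in your write-up, in two adjacent sentences: you correctly record that \clem{noincrease}(b) gives $h_\eta(k)-h_\eta(j)\le2$ for $j\le k$ (in particular $h_\eta(N)\le h_\eta(0)+2=2$), and you correctly record that $\rho<1/2$ forces $h_\eta(N)\sim(1-2\rho)N\to+\infty$.  These two statements are flatly contradictory; you should stop there.  This is exactly the paper's proof, executed in expectation rather than pointwise: after reducing to $\mu(F)=0$ via \clem{densities} (a step you do carry out correctly), one writes $2\rho-1=-k^{-1}\mu\bigl(h_\eta(k)\bigr)\ge-2/k$ for every $k$, whence $\rho\ge1/2$, a contradiction.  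So the right fix is not to repair the minimum argument but to delete everything after your second displayed observation and draw the contradiction directly.
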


\begin{proof} By \clem{densities} it suffices to show that $\mu(F)>0$.
But if $\mu(F)=0$ then for any $k>0$,
 \be
2\rho-1=\frac1k\mu\left(\sum_{i=1}^k(2\eta(i)-1)\right)
  =-\frac1k\mu\bigl(h(k)\bigr)\ge-\frac2k,
 \ee
 by \clem{noincrease}(b). Since $k$ is arbitrary, $2\rho-1\ge0$.
\end{proof}

To describe all low-density states we introduce $\Fc_{\rm low}:=\Mb(F)$,
the set of ETI states supported on $F$, and $\Fch_{\rm low}:=\Mb(\Fh)$.
Since $\Fh=X$, $\Fch_{\rm low}=\Mb(X)=\L$, and thus $\Fch_{\rm low}$ is
indeed a $\lambda$-family (see \cdef{lamfam}), with indexing map
$\Psih_{\rm low}$ the identity.  One checks easily that
$\Phi_\phi(\Fch_{\rm low})=\Fc_{\rm low}$, so that $\Fc_{\rm low}$ is
also a $\lambda$-family, with indexing map $\Psi_{\rm low}=\Phi_\phi$.
Note that $\muhat^{(1)}\in\Fch_{\rm low}$ has density $1$ and
$\mu^{(1)}=\Phi_\phi(\muhat^{(1)})\in\Fc_{\rm low}$ has density 1/2, but
that otherwise the states in $\Fch_{\rm low}$ and $\Fc_{\rm low}$ have
densities less than 1 or less than 1/2, respectively.

\begin{theorem}\label{main3} (a) The set of ETIS states for the F-SSEP
with density $\rho<1/2$ is $\Fc_{\rm low}\setminus\{\mu^{(1)}\}$, and
(b)~the set of ETIS states for the SSM with density $\rhohat<1$ is
$\Fch_{\rm low}\setminus\{\muhat^{(1)}\}$.  \end{theorem}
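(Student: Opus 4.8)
The plan is to deduce (b) from (a) via the bijection of \cthm{bijection}, and to prove (a) by combining \clem{smallrho} and \clem{densities} with an elementary analysis of the ergodic measures supported on $F$. Specifically, $\Phi_\phi$ is a bijection of $\Mb_s(\Xh)$ onto $\Mb_s(X)$ carrying a state of density $\rhohat$ to one of density $\rhohat/(1+\rhohat)$, and $\rhohat\mapsto\rhohat/(1+\rhohat)$ is a strictly increasing bijection of $[0,\infty)$ onto $[0,1)$ restricting to a bijection of $[0,1)$ onto $[0,1/2)$. Hence $\Phi_\phi$ restricts to a bijection from the ETIS states of the SSM with $\rhohat<1$ onto those of the F-SSEP with $\rho<1/2$; granting (a) this image is $\Fc_{\rm low}\setminus\{\mu^{(1)}\}$, and since $\Phi_\phi(\Fch_{\rm low})=\Fc_{\rm low}$ and $\Phi_\phi(\muhat^{(1)})=\mu^{(1)}$ (both noted above), applying $\Phi_\phi^{-1}$ yields (b). So everything reduces to (a).

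To prove (a), first I would show that the ETIS states $\mu$ with $\mu(F)=1$ are exactly the members of $\Fc_{\rm low}=\Mb(F)$. Since $Q(\eta,\cdot)=\delta_\eta$ for $\eta\in F$, every TI measure supported on $F$ is stationary; thus if $\mu(F)=1$ and $\mu=\frac12(\nu_1+\nu_2)$ with $\nu_1\ne\nu_2$ TI measures on $F$, the $\nu_i$ are TIS, contradicting extremality of $\mu$ in $\M_s(X)$, so an ETIS state with $\mu(F)=1$ is an extreme point of the set of TI measures on $F$, i.e.\ ergodic, hence lies in $\Mb(F)$ (regularity being automatic on $F$, since the all-$1$ configuration is not in $F$). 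Conversely, if $\mu\in\Mb(F)$ then $\mu$ is ergodic and, being supported on $F$, stationary, hence TIS; and if $\mu=\frac12(\nu_1+\nu_2)$ with $\nu_1\ne\nu_2$ TIS, then $\nu_i\le2\mu$ forces $\nu_i(F)=1$, so the $\nu_i$ are TI measures on $F$, contradicting ergodicity of $\mu$ — thus $\mu$ is ETIS. Now \clem{smallrho} says every ETIS state of density $\rho<1/2$ satisfies $\mu(F)=1$, so such states lie in $\Fc_{\rm low}$; and conversely each $\mu\in\Fc_{\rm low}$ has, by the ergodic theorem, a definite density, which is $\le1/2$ since a configuration in $F$ has at most $\lceil N/2\rceil$ ones among any $N$ consecutive sites. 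It remains only to pin down which elements of $\Fc_{\rm low}$ have density exactly $1/2$.

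Suppose $\mu\in\Fc_{\rm low}$ has density $1/2$, so $\mu(\eta(0)=1)=1/2$ (by \clem{densities}). Since $\mu(F)=1$ we have $\mu(\eta(0)=\eta(1)=1)=0$, so $\mu(\eta\rng01=10)=1/2$; combined with $\mu(\eta(1)=0)=\mu(\eta(0)=0)=1/2$ this gives $\mu(\eta\rng01=00)=0$. Hence $\mu$-a.s.\ the configuration contains neither the block $11$ nor the block $00$, i.e.\ $\mu$ is supported on the two alternating configurations; the unique TI measure on this pair is $\mu^{(1)}$, so $\mu=\mu^{(1)}$. Therefore the ETIS states with $\rho<1/2$ are precisely $\Fc_{\rm low}\setminus\{\mu^{(1)}\}$, which is (a).

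There is no single serious obstacle here: the proof rests entirely on \clem{densities}, \clem{smallrho}, and \cthm{bijection}, together with the fact that $F$ is frozen (so ``TI on $F$'' and ``TIS on $F$'' coincide) and the two-site marginal computation above. The only point that might look delicate is the passage between extremality in $\M_s(X)$ and ergodicity on $F$, but both directions follow immediately from the invariance of $F$ and the absolute-continuity bound $\nu_i\le2\mu$.
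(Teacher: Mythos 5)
Your proof is correct and follows essentially the same route as the paper: the inclusion $\Fc_{\rm low}\subset\Mb_s(X)$ plus \clem{smallrho} for the converse in (a), and \cthm{bijection} to transfer (a) to (b). You additionally supply details the paper leaves implicit — the equivalence between extremality in $\M_s(X)$ and ergodicity for states supported on the frozen set $F$, and the two-site marginal computation showing $\mu^{(1)}$ is the unique member of $\Fc_{\rm low}$ of density $1/2$ (which the paper merely asserts in the paragraph preceding the theorem) — all of which are sound.
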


\begin{proof}For (a), note that the inclusion
$\Fc_{\rm low}\subset\Mb_s(X)$ is trivial;, while conversely every
state in $\Mb_s(X)$ with density less than 1/2 belongs to
$\Fc_{\rm low}$ by \clem{smallrho}.  By virtue of \cthm{bijection}, (b)
is an immediate consequence of (a). \end{proof}

Finally we ask the question: if the F-SSEP is started in an initial state
$\mu_0$ which is a Bernoulli measure with density $\rho<1/2$, what is the
final distribution of frozen configurations?  As discussed in
\crem{ulmu}, in earlier work we answered this question for several
other facilitated exclusion processes, finding a common limiting
distribution $\underline\mu$.  For the current model the limit is
different (see below), but beyond that we have only a partial
description.

Let $\mu_t=\mu_{t-1}Q$, $t=1,2,\ldots$, be the state at time $t$, with
$\mu_0$ as above, and for $\eta\in X$ let
$S_\eta=\{i\in\bbz\mid\eta\rng{i-2}{i}=000\}$.

\begin{theorem}\label{renewal}The limiting measure
$\mu_\infty=\lim_{t\to\infty}\mu_t$ exists and satisfies
$\mu_\infty(\{0\in S_\eta\})>0$. Under the conditional measure
$\mu_\infty(\cdot\mid 0\in S_\eta)$, $S$ is a renewal process.
\end{theorem}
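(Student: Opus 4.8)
The plan is to first establish existence of $\mu_\infty$ via a site-by-site local stabilization argument, and then to extract the renewal structure of $S$ from the spatial Markov (or rather spatial independence) structure already present in $\mu_0$ together with the locality of the dynamics. For existence: starting from a Bernoulli measure $\mu_0$ of density $\rho<1/2$, the configuration almost surely contains infinitely many ``blocking'' patterns (long runs of zeros) to the left and right of any fixed site. By \clem{noincrease}(a), applied along the evolution, no new run of three zeros can ever be created, and by the elementary dynamics a region that is already frozen and separated from the rest of the configuration by a sufficiently long run of zeros stays frozen forever. The key quantitative point is that the pattern at a fixed finite window $\eta\rng{-M}{M}$ is determined, after finitely many (random but a.s.\ finite) steps, by the initial configuration on a finite interval $[-M-R, M+R]$, because influence propagates at speed one and is in fact stopped by blocking patterns. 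Hence $\mu_t(\text{cylinder})$ converges for every cylinder event, giving a limiting measure $\mu_\infty$, which is automatically supported on $F$ by \clem{smallrho} (or directly, since no jumps are eventually possible near any site). That $\mu_\infty(\{0\in S_\eta\})>0$ follows because the event $\{\eta_0\rng{-2}{0}=000\}$ already has positive $\mu_0$-probability, this pattern is frozen, and with positive probability it is never disturbed — one can make this precise by conditioning on a long run of zeros to the left of the triple and on a favorable (positive-probability) configuration to the right, using that the triple $000$ sitting at the left end of a long zero-run persists.

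For the renewal property, the plan is to exhibit the \emph{finite-range} dependence structure of the limiting configuration restricted to the sites of $S$. Write $S_\eta = \{\ldots < i_{-1} < i_0 < i_1 < \ldots\}$ under $\mu_\infty(\cdot \mid 0 \in S_\eta)$, with $i_0 = 0$. I would show that, conditionally on $0 \in S_\eta$, the gaps $i_{k+1}-i_k$ are i.i.d. The mechanism: the final configuration between two consecutive triple-zeros $i_k$ and $i_{k+1}$ depends only on the \emph{initial} configuration in an interval that is itself delimited by blocking zero-runs, so the ``block'' of the frozen configuration strictly between two consecutive elements of $S$ is a deterministic function of an initial segment of $\mu_0$ that is independent of the corresponding segments for other blocks. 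More precisely: the three zeros at a site of $S$ persist from time zero (they are never disturbed, being themselves a blocking pattern), so the sites of $S$ in $\eta$ are exactly the sites $i$ for which $\eta_0\rng{i-2}{i}=000$ persists — but one must be careful, as $S_\eta$ could in principle also contain triples $000$ that were \emph{created} during the evolution. By \clem{noincrease}(a) no new triple of zeros is ever created, so in fact $S_\eta = S_{\eta_0}$ almost surely, i.e. the set $S$ is \emph{not} changed by the dynamics at all. This is the crucial simplification: $S$ under $\mu_\infty(\cdot\mid 0\in S)$ has exactly the law of $S$ under $\mu_0(\cdot\mid 0\in S)$, and since $\mu_0$ is Bernoulli, $S_{\eta_0}$ is manifestly a renewal process — the increments $i_{k+1}-i_k$ are i.i.d.\ with the distribution of the first return to a $000$ pattern in an i.i.d.\ sequence, which is a standard renewal-in-a-Bernoulli-sequence fact.

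So the argument reduces to two points: (i) $S_\eta = S_{\eta_0}$ $\mu_0$-a.s.\ (equivalently $P_{\mu_0}$-a.s.\ along the whole space-time history), which is \clem{noincrease}(a) together with the observation that a $000$ pattern, once present, is frozen and remains present; and (ii) the elementary fact that the positions of occurrences of a fixed finite word in an i.i.d.\ sequence form a (delayed) renewal process, so that conditioning on an occurrence at the origin yields an honest renewal process. The main obstacle is the clean justification of existence of $\mu_\infty$ as a measure on $X$ (convergence of all cylinder probabilities), which requires a careful ``finite speed of propagation plus blocking patterns'' lemma: one must show that $\mu_0$-a.s.\ there exist, for every $M$, random finite times $T$ after which $\eta_t\rng{-M}{M}$ is constant, and that this constant value is a measurable function of $\eta_0$ on a finite window — this is where most of the real work lies, though the ingredients (\clem{noincrease}, locality of $Q$, a.s.\ occurrence of long zero-runs under a sub-critical Bernoulli measure) are all in hand. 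Once existence is settled, the renewal statement is essentially immediate from the invariance $S_\eta=S_{\eta_0}$.
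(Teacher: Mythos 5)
Your proposal contains a step that is simply false and that your whole renewal argument leans on: the claim that a $000$ pattern, once present, is frozen, so that $S_\eta=S_{\eta_0}$ a.s.\ and the renewal structure can be read off directly from the Bernoulli measure $\mu_0$. A run of three zeros can be invaded: if $\eta_t\rng{1}{5}=11000$, the particle at site $2$ has exactly one occupied neighbor and jumps to site $3$, giving $\eta_{t+1}\rng{1}{5}=10100$ and destroying the triple at sites $3,4,5$. What is true (and is what the paper proves by elementary analysis of the dynamics; note that \clem{noincrease}(a), which you cite, is a statement about non-frozen TIS states and does not apply to the evolution from $\mu_0$) is only the reverse inclusion: a triple present at time $t$ was present at all earlier times, so no new triples are created and $S_{\eta_\infty}\subseteq S_{\eta_0}$. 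The surviving set is a genuinely dynamics-dependent thinning of $S_{\eta_0}$: the paper computes that a given initial triple survives forever with probability $q^2$ where $q$ is the probability that no particle ever crosses a fixed bond, shown positive by a gambler's-ruin estimate $q\ge(1-2\rho)/(1-\rho)$. In particular the gap distribution of $S_{\eta_\infty}$ is \emph{not} the first-return law of the word $000$ in an i.i.d.\ sequence, so your reduction ``(ii)'' does not apply. Your fallback mechanism for positivity of $\mu_\infty(\{0\in S_\eta\})$ --- ``a triple sitting at the left end of a long zero-run persists'' --- fails for the same reason: zero-runs are eaten from their ends, and only a quantitative control on the total particle flux across a bond (the gambler's-ruin argument) rules out eventual invasion.

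The correct route, which your ``more precisely'' aside brushes against, is to characterize $i\in S_{\eta_\infty}$ as the event $B_i$ that the triple at $(i-2,i-1,i)$ is present \emph{for all} $t\ge0$ (equivalently, for all large $t$, by the no-creation fact), and then to observe that conditioned on $B_0$ the evolutions to the left of site $-2$ and to the right of site $0$ never interact, so the successive gaps of $S_{\eta_\infty}$ are independent. This persistent-barrier independence, not the (false) invariance of $S$ under the dynamics, is what yields the renewal property. Your outline of the existence of $\mu_\infty$ is closer to the mark --- local fixation between two surviving barriers --- but again the barriers must be taken to be the positive-density set of \emph{surviving} triples $\{i:\xi\in B_i\}$, whose existence arbitrarily far to the left and right requires the estimate $\Pt(B_i)=q^2(1-\rho)^3>0$ rather than the mere presence of long zero-runs in $\eta_0$.
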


\begin{proof}Let $\Pt$ be the path measure on
$\Xt_2=\{0,1\}^{\bbz_+\times\bbz}$ obtained from the initial state
$\mu_0$ and the F-SSEP dynamics.  Elementary analysis shows that if
$\xi\in X_2$ satisfies $\xi_t\rng {i-2}i=000$ then $\Pt$-a.s.~also
$\xi_s\rng {i-2}i=000$ for $0\le s\le t$.  But then for all $i\in \bbz$,
 \begin{align}\nonumber
 B_i&:=\{\xi\mid\xi_t\rng {i-2}i=000\text{ for all $t\ge0$}\}\\
 &= \{\xi\mid\xi_t\rng {i-2}i=000\text{ for all 
 sufficiently large $t$}\}. \label{th8}
 \end{align}
 Moreover, $\Pt(B_i) =q^2(1-\rho)^3$, with $q$ the probability that, for
an initial measure under which the configuration on
$\bbn:=\{i\in\bbz\mid i>0\}$ is distributed as a Bernoulli measure with
density $\rho$ but all other sites are empty, no particle crosses the
$\langle0,1\rangle$ bond at any time during the evolution.

Next we show that $q>0$.  Let $A_t$ be the event that for all $L\ge1$
there are in the configuration $\xi_t$ at most $L/2$ particles on sites
$1,2,\ldots,L$. A standard gambler's ruin computation \cite{GS} shows
that $\mu_0(A_0)=(1-2\rho)/(1-\rho)$.  Now when $A_t$ holds no particle
can cross the bond $\langle0,1\rangle$ at the next time step, from the
$L=1$ condition. Moreover, $A_t\subset A_{t+1}$.  Thus
$q\ge\mu_0(A_0)>0$.

 To establish the existence of $\mu_\infty$, and in fact a stronger
result, the $\Pt$-almost sure existence of
$\eta_\infty:=\lim_{t\to\infty}\xi_t$, one shows from $\Pt(B_i)>0$ that
for any $L>0$, $\Pt\bigl(\bigcup_{k,l\ge L}B_{-k}\cap B_l\bigr)=1$.  For
$\xi\in B_{-k}\cap B_l$, simple considerations of the system in a finite
region then imply that $\lim_{t\to\infty}\xi_t\big|_{[-k+1,l-3]}$ exists,
$\Pt$-a.s.  For more details see the proof of Lemma~3.6 of \cite{AGLS}.

Since $\mu_\infty$ is the distribution of $\eta_\infty$ under $\Pt$ we
must show that $S_{\eta_\infty}$ is a renewal process under the
conditional measure $\Pt(\cdot\mid0\in S_{\eta_\infty})$.  Now from
\eqref{th8} we have that $i\in S_{\eta_\infty}$ iff $\xi\in B_i$, and if
we condition on $0\in S_{\eta_\infty}$, that is, on $B_0$, then what
happens to the left of site $-2$ is independent of what happens to the
right of site $0$.  Thus if, under this conditioning, we label the points
of $S_{\eta_\infty}$ sequentially as $(s_k)_{k\in\bbz}$, with $s_0=0$, so
that $\xi_t\rng{s_k-2}{s_k}=000$ for all $t$ and $k$, the differences
$s_k-s_{k-1}$ are independent.  \end{proof}

Let us condition on $0\in S_{\eta_\infty}$ and adopt the notation of the
previous proof.  Then either $s_1=1$, an event with (conditional)
probability $1-\rho$, or $s_1\ge4$; in the latter case
$\eta_\infty\rng{-2}{s_1}=000\,\sigma\,000$, with $\sigma$ any string
which begins and ends with 1 and contains no substrings $11$ or $000$.
To complete the description of $\mu_\infty$ one would need to find the
distribution of these $\sigma$ (and hence of $s_1-s_0$); we have only
partial results in this direction.  However, one finds easily that, for
example, $\mu_\infty(\eta\rng16=101000\mid0\in S_\eta)=\rho^2(1-\rho)^4$;
on the other hand, with $\underline\mu$ as introduced in \crem{ulmu},
$\underline\mu(\eta\rng16=101000\mid0\in S_\eta)=2\rho^2(1-\rho)^4$
\cite{AGLS,GLS1,GLS2}, establishing the difference of $\mu_\infty$ and
$\underline\mu$.

\section{Densities $\rho= 1/2$ and $\rhohat=1$\label{rhohalf}}

In this section we describe all ETIS states of density 1/2 for the F-SSEP
or density 1 for the SSM.  We first show that, for such a state in the
F-SSEP, the height profile $h_\eta$ (see \eqref{hdef}) is a.s.~confined
to a strip of height at most two.  For $\eta\in X$ we let
$\Delta(\eta)=\sup_{i<j}|h_\eta(j)-h_\eta(i)|$.

\begin{lemma}\label{strip} If $\mu\in\Mb_s(X)$ has density $\rho=1/2$ then,
$\mu$-a.s., $\Delta(\eta)\le2$.  \end{lemma}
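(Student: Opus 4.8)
The plan is to dispose of the frozen case by a direct argument and then reduce the non-frozen case to \clem{noincrease}(b) together with the reflection symmetry of the dynamics. By \clem{densities}, $\mu$ has a definite density, which is $1/2$ by hypothesis, and either $\mu(F)=1$ or $\mu(F)=0$. If $\mu(F)=1$, then $\mu$ is in fact an ergodic translation-invariant measure on $F$ (on $F$ the dynamics is trivial, so stationarity is automatic and extremality among stationary states is the same as ergodicity), i.e.\ $\mu\in\Mb(F)=\Fc_{\rm low}$; as noted following \cthm{main3}, the only member of $\Fc_{\rm low}$ of density $1/2$ is $\mu^{(1)}$. (Alternatively one argues directly: in a frozen configuration consecutive occupied sites are separated by a gap of at least two sites, and density exactly $1/2$ forces every gap to have length exactly two, so the configuration is one of the two period-two alternating configurations.) For $\eta$ in the support of $\mu^{(1)}$ the profile $h_\eta$ merely oscillates by one unit, so $\Delta(\eta)\le1\le2$ there, and this case is settled. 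Hence we may assume $\mu(F)=0$.

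When $\mu(F)=0$, \clem{noincrease}(b) gives at once the ``upper'' half of the claim: $\mu$-a.s., $h_\eta(j)-h_\eta(i)\le2$ for all $i\le j$. The remaining task is the matching lower bound $h_\eta(j)-h_\eta(i)\ge-2$, and for this I would invoke the reflection invariance of the F-SSEP (already used in the proof of \clem{frpart}). Let $R\colon X\to X$ be spatial reflection and set $\mu'=R_*\mu$. Then $\mu'\in\Mb_s(X)$, it has density $1/2$, and $\mu'(F)=0$ since $F$ is reflection-invariant; so \clem{noincrease}(b) applies to $\mu'$ and shows that, $\mu'$-a.s., $h_\eta$ does not increase by more than two units over any interval. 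Since, up to a bounded shift of the index, $h_{R\eta}(k)=-h_\eta(-k)$, an increase of $h_{R\eta}$ over an interval is precisely a decrease of $h_\eta$ over the reflected interval; transporting the statement for $\mu'$ back through $R$ therefore yields that, $\mu$-a.s., $h_\eta$ does not decrease by more than two units over any interval, i.e.\ $h_\eta(j)-h_\eta(i)\ge-2$ for all $i\le j$.

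Combining the two bounds gives $\mu$-a.s.\ that $|h_\eta(j)-h_\eta(i)|\le2$ for all $i<j$, which is exactly $\Delta(\eta)\le2$. I expect the only points requiring a little care to be the identification of $\mu$ with $\mu^{(1)}$ in the frozen case---one must genuinely use that the density is $1/2$ and not merely $\le1/2$, since frozen states of lower density have unbounded $\Delta$---and the bookkeeping needed to check that reflection turns ``does not increase by more than two'' into ``does not decrease by more than two'', keeping the staircase index shifts straight. Everything else is a direct appeal to \clem{densities}, \clem{noincrease}(b), and reflection symmetry.
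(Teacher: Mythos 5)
Your handling of the frozen case is fine and matches the paper's. But the reflection argument for the lower bound $h_\eta(j)-h_\eta(i)\ge-2$ does not work, and the error is exactly in the bookkeeping you flagged as needing care. Your identity $h_{R\eta}(k)=-h_\eta(-k)$ (up to a shift) is correct, but the conclusion you draw from it is not: for $i<j$,
$$h_{R\eta}(j)-h_{R\eta}(i)=-h_\eta(-j)+h_\eta(-i)=h_\eta(-i)-h_\eta(-j),$$
which is the value of $h_\eta$ at the \emph{right} endpoint of the reflected interval $[-j,-i]$ minus its value at the \emph{left} endpoint --- i.e.\ an \emph{increase} of $h_\eta$ over $[-j,-i]$, not a decrease. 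The negation of $h$ and the reversal of the interval cancel each other. Concretely, spatial reflection sends the string $000$ (a rise of 3) to $000$ and $111$ (a drop of 3) to $111$, so it preserves, rather than exchanges, the events ``rises by more than 2'' and ``drops by more than 2.'' Applying \clem{noincrease}(b) to $R_*\mu$ therefore just reproduces the upper bound you already have. (The symmetry that would exchange rises and drops is particle--hole exchange, but the F-SSEP is not particle--hole symmetric.)

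There is also a structural red flag you should have caught: your argument for the lower bound never uses $\rho=1/2$ --- \clem{noincrease}(b) applies to any non-frozen TIS state --- yet the lower bound is false for non-frozen TIS states of density $\rho>1/2$, whose height profiles have strictly negative drift. Any correct proof must invoke the density. The paper's proof does exactly this: assuming (for an ergodic component) that $\mu\bigl(h_\eta(j)\le-3\bigr)>0$ for some minimal $j>0$, it combines this with the upper bound of 2 on rises to produce, with positive probability, a site whose height exceeds all heights to its right; ergodicity then gives a positive density of such ``records,'' forcing $h_\eta$ to have negative overall slope, which contradicts $\rho=1/2$. You need to replace your reflection step with an argument of this kind.
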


\begin{proof} By \clem{densities} we may assume that either $\mu(F)=0$ or
$\mu(F)=1$. Now $\mu^{(1)}$ is the only TI state of density 1/2 supported on
$F$, and it satisfies the conclusion of the lemma.  Consider then the case
$\mu(F)=0$.  By \clem{noincrease}(b), $h_\eta(j)-h_\eta(i)\le2$ for
$i<j$, so by translation invariance it suffices to show that for any
$j>0$, $h_\eta(j)\ge-2$ $\mu$-a.s.; moreover, it suffices to
verify the result for each ergodic component of $\mu$.

Suppose then that $\mu$ is ergodic but that for some $j>0$, which we take
to be minimal, $\mu\bigl(\{\eta\mid h_\eta(j)\le-3\}\bigr)>0$.
Then from \clem{noincrease}(b) and the minimality of $j$, also 
$\mu\bigl(\{\eta\mid\eta(0)>\sup_{i>0}h_\eta(i)\}\bigr)>0$, and with
ergodicity this implies that, $\mu$-a.s., $h_\eta$ has a negative
overall slope.  But this is inconsistent with $\mu$ having density $1/2$.
\end{proof}

We can now classify the states in $\Mb_s(X)$ with density 1/2.
Consider a general $\eta$ with $\Delta(\eta)\le2$; a portion of a
typical height profile (with $\Delta(\eta)=2$) is shown in
\cfig{profhalf}.  As indicated there, we may partition $\eta$ into {\it
left-moving (L)}, {\it right-moving (R)}, and {\it transition (T)}
regions, where T denotes a maximal region in which pairs of 0's alternate
with pairs of 1's, L a region between two T regions having the form
$(10)^k$ for some $k\ge1$, and R a region between two T regions having
the form $(01)^k$, $k\ge1$.  

\begin{figure}[ht]
  \centering\bigskip
  \hbox to \hsize{\hfill\includegraphics{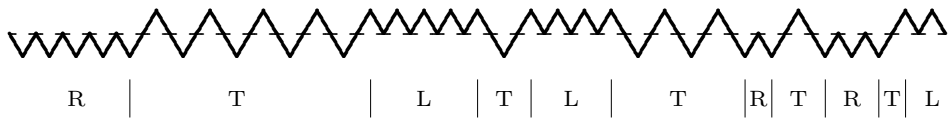}\hfill}
\caption{Portion of a typical height profile, showing Left-moving,
Right-moving, and Transition regions.} \label{profhalf} \end{figure}

 The naming of these regions reflects the fact that a configuration in
which only $L$ and $T$ (respectively $R$ and $T$) regions appear
translates to the left (respectively right) at velocity 2 under the
dynamics: if $\eta_t$ is such a configuration, then
$\eta_{t+1}=\tau^{-2}\eta_t$ (respectively $\eta_{t+1}=\tau^2\eta$).  We
will see below that all TIS states are supported on such configurations.
The two simplest examples of such states, $\mu^{(1)}$ and $\mu^{(2)}$,
were defined in \eqref{mu12}; $\mu^{(1)}$ may be viewed as supported on
configurations with a single L, or equivalently a single R, region, and
$\mu^{(2)}$ is supported on configurations consisting of a single T
region.

  It is straightforward to work out further rules for the evolution of
the configurations with $\Delta(\eta)\le2$.  Such configurations cannot
contain the pattern $11011$, and hence the evolution is deterministic.
Boundaries between regions usually move with velocity 2, L$|$T and T$|$L
boundaries to the left and R$|$T and T$|$R boundaries to the right.
However, if R and L regions are separated by a T region which is a single
pair of 0's, R$|$00$|$L, then both boundaries are stationary.  If in the
resulting T$|$R$|$00$|$L$|$T situation the $L$ region is shorter than the
$R$ region then the (left-moving) L$|$T boundary will eventually reach
the 00$|$L boundary, at which time the L region will disappear and the 00
and T regions amalgamate into a single T region; the situation is similar
when the R region is shorter, or the two regions are the same length.

Following these ideas one easily sees that, on a ring, any initial
configuration evolves to one in which either no L region, or no R region,
occurs.  No such conclusion is possible when the ring is replaced by
$\bbz$, but similar considerations do imply the following: For any
configuration $\eta$ with $\Delta(\eta)\leq2$, the minimum size
$\delta=\delta(\eta)$ of a maximal uniform block---a maximal right block,
a region of the form $T|R|T|\cdots|T|R|T$ preceded and followed by $L$
regions, or a maximal left block, defined in a parallel way---must
increase, if $\delta < \infty$, within a time proportional to $\delta$.
From this it follows that any TI stationary state for $\rho = 1/2$ must
be supported on configurations with $\delta = \infty$, i.e., on
configurations consisting of a single maximal uniform block or of a
maximal right block adjacent to a maximal left block. However, the set of
configurations of the latter sort must have probability 0 in any TI
state.

Rather than filling in the straightforward details of the argument just
sketched, we now give a different and shorter (though perhaps less
intuitive) argument.  Let $X_{\rm left}$ (respectively $X_{\rm right}$)
denote the set of configurations $\eta\in X$ with $\Delta(\eta)=2$ which
contain no R (respectively no L) region.  By convention we suppose that
$X_{\rm left}$ and $X_{\rm right}$ contain $\eta^{(1)}$ and
$\tau\eta^{(1)}$ (see \csect{prelims}); then
$X_{\rm left}\cap X_{\rm right}$ consists of $\eta^{(1)}$, $\eta^{(2)}$,
and their translates.  Recall also the spaces $X_{\rho}$, $\Xh_{\rhohat}$,
$\Xh_{\rm left}$ and $\Xh_{\rm right}$, defined in \csect{intro}.

\begin{theorem}\label{leftright} (a) When $\rho=1/2$,
$\Mb_s(X_\rho)=\Mb(X_{\rm left})\cup\Mb(X_{\rm right})$.
\par\smallskip\noindent
(b) When $\rhohat=1$,
$\Mb_s(\Xh_{\rhohat})=\Mb(\Xh_{\rm left})\cup\Mb(\Xh_{\rm right})$.
\end{theorem}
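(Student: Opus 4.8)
\noindent The plan is to establish (a) for the F-SSEP and then deduce (b) from it exactly as \cthm{main3}(b) was deduced from \cthm{main3}(a). Indeed $\Phi_\phi$ carries density $\rhohat=1$ to density $\rho=1/2$, so it restricts to a bijection from $\Mb_s(\Xh_{\rhohat})$ with $\rhohat=1$ onto $\Mb_s(X_\rho)$ with $\rho=1/2$; and by the final sentence of \cthm{bijection} it maps $\M(\Xh_{\rm left})$ onto $\M(A)$, where $A$ is the smallest TI subset of $X$ containing $\phi(\Xh_{\rm left})$, so it suffices to check from the definitions that $A=X_{\rm left}$ and, similarly, that the analogous set for $\Xh_{\rm right}$ is $X_{\rm right}$.

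For (a) the inclusion $\Mb(X_{\rm left})\cup\Mb(X_{\rm right})\subseteq\Mb_s(X_\rho)$ is easy: $X_{\rm left}$ is translation invariant and invariant under the dynamics, and on $X_{\rm left}$ the dynamics is simply $\eta\mapsto\tau^{-2}\eta$, so every TI measure supported there is automatically stationary; moreover every configuration in $X_{\rm left}$ has density $1/2$ and $X_{\rm left}$ avoids the all-ones configuration, so such a state is regular of density $1/2$, and an extreme point of the convex set of TI states that lies in the smaller convex set of TIS states is extreme there too. The same applies to $X_{\rm right}$.

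The substance is the reverse inclusion, for which I would use a version of the ``shorter'' argument alluded to just before the theorem. Let $\mu\in\Mb_s(X_\rho)$ with $\rho=1/2$. By \clem{strip} $\mu$ is supported on $\{\eta\mid\Delta(\eta)\le2\}$, on which the $L/R/T$ decomposition is available; the only configurations there with $\Delta(\eta)<2$ are $\eta^{(1)}$ and $\tau\eta^{(1)}$, which by convention already belong to $X_{\rm left}\cap X_{\rm right}$. For such $\eta$ partition the line into maximal uniform blocks (maximal runs of $R$ and $T$ regions, or of $L$ and $T$ regions) and let $\delta(\eta)\in\{2,3,\dots\}\cup\{\infty\}$ be the least block size. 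The key input is the two dynamical facts, valid for the deterministic evolution on $\{\Delta\le2\}$ and sketched before the theorem: $\delta$ is non-decreasing along any trajectory, and if $\delta(\eta_0)<\infty$ then $\delta(\eta_t)$ eventually exceeds every bound (the shortest block is absorbed by its neighbours within a time of order $\delta(\eta_0)$, whereupon it and an adjacent $T$ region amalgamate, strictly reducing the block count, and one iterates). Granting these, the events $\{\xi\in X_2\mid\delta(\xi_t)\le n\}$ are nested and decreasing as $t$ increases; their $P_\mu$-probability is therefore non-increasing in $t$, yet constant because $P_\mu$ is invariant under time translation, and this forces $P_\mu(\delta(\xi_0)\le n)=0$ for every finite $n$. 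Hence $\mu$-a.s.\ $\delta(\eta)=\infty$. A configuration with $\Delta(\eta)\le2$ all of whose blocks are infinite consists of at most two blocks; if there were exactly two, necessarily one of each type, the configuration would carry a distinguished boundary site $b(\eta)$ with $b(\tau\eta)=b(\eta)+1$, so that $b_*(\mu\big|_Y)$, with $Y$ the TI set of such configurations, would be a translation invariant probability measure on $\bbz$ — impossible. Thus $\mu$-a.s.\ $\eta$ is a single block, i.e.\ $\mu(X_{\rm left}\cup X_{\rm right})=1$; since $X_{\rm left}$ and $X_{\rm right}$ are TI and invariant, $\mu\big|_{X_{\rm left}}$ and $\mu\big|_{X_{\rm right}}$ (normalized) are TIS states, so extremality of $\mu$ in $\Mb_s(X)$ leaves it supported on just one of them, giving $\mu\in\Mb(X_{\rm left})\cup\Mb(X_{\rm right})$.

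I expect the main obstacle to be a rigorous proof of the two dynamical facts about $\delta$: this calls for a careful analysis of how the $L|T$, $T|L$, $R|T$, $T|R$ and stable $R|00|L$ interfaces move and collide under the synchronous update, together with some care over configurations having infinitely many finite blocks. If one prefers to avoid this bookkeeping, I would look for a shorter replacement either in a flux/conservation argument showing that the densities of ``left'' and ``right'' interfaces cannot both be positive in a TI stationary state, or in a comparison with the finite-ring dynamics of \crem{finite}, where every configuration is known to relax to a single uniform block.
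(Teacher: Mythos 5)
Your overall architecture is sound: the easy inclusion, the reduction via \clem{strip} to configurations with $\Delta(\eta)\le2$, the ruling out of a single L-to-R transition by the ``distinguished boundary site'' argument, the use of extremality to land in one of $\Mb(X_{\rm left})$, $\Mb(X_{\rm right})$, and the deduction of (b) from (a) via \cthm{bijection} all match what the paper needs and does. But the core of the hard direction rests on two dynamical claims about $\delta$ that you assert and do not prove, and this is a genuine gap rather than a deferrable detail: it is precisely the ``straightforward but tedious'' interface bookkeeping that the paper sketches before the theorem and then explicitly declines to carry out, giving instead ``a different and shorter'' proof. Worse, your argument needs strictly more than the paper's sketch claims. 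The sketch only asserts that $\delta$, if finite, must increase within a time proportional to $\delta$; your nested-events computation $\{\delta(\xi_{t+1})\le n\}\subseteq\{\delta(\xi_t)\le n\}$ additionally requires that $\delta$ be non-decreasing at \emph{every} step, i.e.\ that no new small block can ever appear and no block can ever shrink. Verifying this forces you to analyze the exceptional stationary $R|00|L$ interfaces (where blocks are pinned and internally eroded while their outer boundaries grow), the amalgamation of $T$ regions when an $L$ or $R$ region is consumed, simultaneous mergers, and the non-creation of $L$ or $R$ regions under the synchronous update; one must also handle configurations with infinitely many finite blocks, where ``each block eventually grows'' does not by itself give $\delta(\xi_t)\to\infty$. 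None of this is done, and you say as much.

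The paper's actual proof is essentially the ``flux/conservation argument'' you name as a fallback in your last paragraph, and it is worth seeing why it is shorter. One first shows that the stationary interface $R|00|L$, i.e.\ the string $010010$, has $\mu$-probability zero: writing the one-step balance equation for the event that sites $1,2$ are the $01$ at the right end of an $R$ region, stationarity plus the fact that left and right ends of $R$ regions have equal density forces $\mu(\eta\rng{-1}4=010010)=0$. One then observes that any configuration containing both an $R$ and an $L$ region (other than the measure-zero single-transition configurations) contains a pattern $0100(1100)^k10$ ($R|T|L$ with $|T|=4k+2$), and that the deterministic dynamics carries this pattern to the same pattern with $k-1$; taking $k$ minimal with positive probability gives a contradiction, the base case $k=0$ being the flux computation. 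This bypasses the block-size monotonicity entirely. To make your submission complete you should either carry out the interface analysis in full or replace the middle of your argument with this stationarity-identity-plus-induction scheme.
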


Before giving the proof we explain how the ETIS states of the theorem are
organized into the $\lambda$-families of \cdef{lamfam}.  Consider first
$\Mb_s(\Xh)=\Mb(\Xh_{\rm left})\cup\Mb(\Xh_{\rm right})$; here
$\Mb(\Xh_{\rm left})$ and $\Mb(\Xh_{\rm right})$ are $\lambda$-families
which we denote respectively $\Fch_{\rm left}$ and $\Fch_{\rm right}$.
The substitution map $\phileft:X\to \Xh$ given by $1\to\2\0$, $0\to\1$
gives rise to an indexing bijection
$\Psih_{\rm left}=\Phi_{\phileft}:\L\to\Fch_{\rm left}$, and similarly we
obtain $\Psih_{\rm right}:\L\to\Fch_{\rm right}$ from $\phiright$, the
substitution map sending $1\to\0\2$, $0\to\1$.  The indexing maps for
$\Fc_{\rm left}=\Mb(X_{\rm left})$ and
$\Fc_{\rm right}=\Mb(X_{\rm right})$ are
$\Psi_{\rm left}=\Phi_\phi\circ\Psih_{\rm left}$ and
$\Psi_{\rm right}=\Phi_\phi\circ\Psih_{\rm right}$, or can be obtained
directly from the substitution maps $1\to1100$, $0\to10$ and $1\to0011$,
$0\to01$, respectively.  Note that
$\Fch_{\rm left}\cap\Fch_{\rm right}=\{\muhat^{(1)},\muhat^{(2)}\}$ and
$\Fc_{\rm left}\cap\Fc_{\rm right}=\{\mu^{(1)},\mu^{(2)}\}$.

\begin{proofof}{\cthm{leftright}} (a) Since the F-SSEP dynamics on
$X_{\rm left}$ or $X_{\rm right}$ is simply left or right translation,
respectively, clearly
$\Mb(X_{\rm left})\cup\Mb(X_{\rm right})\subset\Mb_s(X)$.  Suppose
conversely that $\mu\in\Mb_s(X)$.  Since $\mu$ is an ETIS state
and $X_{\rm left}$ and $X_{\rm right}$ are invariant under the dynamics,
it suffices to prove that $\mu(X_{\rm left}\cup X_{\rm right})=1$, i.e.,
that the set of configurations $\eta$ which have $\Delta(\eta)=2$, and
contain both L and T regions, has measure zero.

We first show that,
$\mu$-a.s., no configuration contains the string $010010$
(corresponding to regions R$|$T$|$L with T having just two sites).  For
let $E$ be the event that sites 1 and 2 both belong to an R region and
that $\eta(1{:}2)=01$.  Then from the dynamical rules,
 \begin{align}
 P_\mu(\xi_{t+1}\in E)&=P_\mu(\xi_t\in E)-P_\mu(\xi_t(-1{:}2)=1101)
 +P_\mu(\xi_t(-1{:}4)=010011)\nonumber\\
 &=P_\mu(\xi_t\in E)-P_\mu(\xi_t(-1{:}2)=1101)
     +P_\mu(\xi_t(-1{:}2)=0100)\nonumber\\
   &\hskip40pt-P_\mu(\xi_t(-1{:}4)=010010).\label{xx}
 \end{align}
  But  $\mu\bigl(\eta(-1{:}2)=1101)=\mu\bigl(\eta(-1{:}2)=0100\bigr)$,
since the density of left and right ends of R regions must be the same,
so that, with the stationarity of $\mu$, \eqref{xx} implies that
$\mu\bigl(\eta(-1{:}4)=010010\bigr)=0$, as claimed.

 Now if $\mu(X_{\rm left}\cup X_{\rm right})<1$ then there is a minimal $k$
such that $\mu\bigl(\eta(1{:}4k+6)=0100(1100)^k10\bigr)>0$.  From the
claim above, $k>0$.  But if $\xi_t(1{:}4k+6)=0100(1100)^k10\bigr)$ then
$\xi_{t+1}(3{:}4k+4)=0100(1100)^{k-1}10\bigr)$, contradicting the
minimality of $k$.  

 \smallskip\noindent
 (b) By (a) and \cthm{bijection} (using $\Ah=\Xh_{\rm left}$ and
$\Ah=\Xh_{\rm right}$) we have
 \begin{align}\nonumber
 \Mb_s(\Xh_1)=\Phi_\phi^{-1}\bigl(\Mb_s(X_{1/2})\bigr)
 &=\Phi_\phi^{-1}\bigl(\Mb(X_{\rm left})\bigr)\cup
  \Phi_\phi^{-1}\bigl(\Mb(X_{\rm right})\bigr)\\
 &= \Mb(\Xh_{\rm left})\cup\Mb(\Xh_{\rm right}).
 \end{align}
\end{proofof}

 \section{High density\label{rhobig}}

Most of our discussion of the high density region, $\rho>1/2$ for the
F-SSEP or $\rhohat>1$ for the SSM, will be carried our for the SSM.  We
first obtain a reduction of the configuration space of the model to the
space $\Xha\subset\Xh$, the set of configurations for which no two
adjacent sites both have short stacks (see \csect{intro}).  Note that any
extremal TI state in $\Xha_e$ must have density $\rhohat_e$ satisfying
$\rhohat_e\ge1$.  For future reference we also note that $H:=\phi(\Xha)$
is the set of F-SSEP configurations which do not contain any of the
substrings 000, 0100, 0010, and 01010.

 \begin{theorem}\label{tricky}Every TIS state for the stack dynamics
with density $\rhohat>1$ is supported on $\Xha$; equivalently, every
TIS state for the F-SSEP dynamics with density $\rho>1/2$ is
supported on $H$.  \end{theorem}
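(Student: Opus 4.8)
The plan is to work throughout with the SSM; the F-SSEP statement is then immediate from \cthm{bijection} together with the identity $\phi(\Xha)=H$, since an SSM state of density $\rhohat<\infty$ is automatically regular. So let $\muhat$ be a TIS state of density $\rhohat>1$. Every configuration in $\Fh$ has density at most $1$, so $\muhat(\Fh)=0$; and then the SSM form of \clem{noincrease}(a) shows that $\muhat$-a.s.\ no two consecutive stacks have height $0$. Call a maximal block of $k\ge2$ consecutive short stacks, flanked by tall stacks, a \emph{bad run}: thus $\Xh\setminus\Xha$ is exactly the set of configurations containing a bad run, and the task is to show that bad runs occur with probability $0$.

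Two facts govern a bad run. (i)~No particle moves on a bond between two short stacks, and a short stack never loses a particle; hence in one step the interior of a bad run is unchanged, an end stack leaves the run precisely when its height is $1$ (becoming a tall stack of height $2$), and the short end stack is meanwhile invaded by the abutting tall stack outside---so a bad run is eaten inward at each end. (ii)~A short-short bond can be created only directly next to a bad run present at the previous step: turning a tall stack into a short one forces it to lose a particle, and an elementary check of the particle movements on its two bonds---the stack then having height $2$ or $3$---shows that whenever a short neighbour is also produced there was already a short-short bond alongside. So bad runs may lengthen at their ends but cannot nucleate. Finally, no bad run is infinite: a two-sided one is a frozen configuration, impossible since $\rhohat>1$, and a one-sided one makes the configuration eventually all-short along a half-line---a translation-invariant event, of probability $0$ or $1$ on each translation-ergodic component of $\muhat$, whose occurrence is incompatible with density $>1$.

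Now suppose, for contradiction, that bad runs occur with positive probability, and let $k\ge2$ be the least length of a bad run occurring with positive probability (for $k=2$ this minimal run does not have both stacks of height $0$, by the previous paragraph). A minimal bad run cannot persist at a fixed location for all time---by the SSM form of \clem{frpart} when $k\ge4$, and because a frozen bad run would have to grow without bound when $k=2,3$---so, arguing as in the proofs of \clem{noincrease}(b) and \cthm{leftright}, with positive $\Ph_{\muhat}$-probability a minimal bad run is \emph{freshly created} at some time $t$. Analysing the transition from $t-1$ to $t$ by means of~(ii) then produces, at time $t-1$, either a bad run of length $<k$ or two consecutive height-$0$ stacks, contradicting the minimality of $k$ or the second paragraph. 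I expect this transition analysis to be the real work: the number of short-short bonds in a window is \emph{not} monotone---a bad run both of whose end stacks are empty grows by two stacks in one step---so one must follow a finer quantity, matching each newly created short-short bond with the simultaneous healing of one in the same run, and one must keep track of the random collision outcomes that arise where a tall neighbour has a further tall stack beyond it; the base cases $k=2,3$, where ``length $<k$'' means the run was absent, come directly from~(ii) and the invasion in~(i). (Alternatively, one could first prove the statement for the finite-ring models of \crem{finite}, where infinite bad runs are impossible, and pass to the limit.) This yields $\muhat(\Xha)=1$, whence by \cthm{bijection} the corresponding F-SSEP states are supported on $H$, as claimed.
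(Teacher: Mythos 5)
Your preliminary reductions are sound and partly coincide with the paper's: excluding $\0\0$ via \clem{noincrease}, observing that the interior of a run of short stacks is unchanged while its end stacks each gain one particle from the flanking tall stacks, checking that a new short--short bond can only appear alongside an existing one, and ruling out infinite runs by the density hypothesis. But the argument that is supposed to deliver the contradiction --- minimality of the run length $k$ plus ``fresh creation'' --- does not close, and you have in effect flagged this yourself by deferring ``the real work.'' The dichotomy you assert (a freshly created minimal run forces, at time $t-1$, either a run of length $<k$ or a $\0\0$) is false: a run of length $k+2$ whose two end stacks have height $\1$, flanked by tall stacks that remain tall, becomes in one step a run of length exactly $k$ at the interior location, producing a ``fresh'' minimal run from a strictly \emph{longer} one, with no shorter run and no adjacent zeros at the previous time. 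Since runs can also lengthen when a flanking stack of height $\2$ or $\3$ loses two particles, the minimal length is not a quantity that can only decrease backward in time, and no induction on $k$ of the kind you describe gets started. A telling symptom is that your argument uses $\rhohat>1$ only to exclude $\0\0$ and infinite runs; but the hypothesis must enter more essentially, since nothing else in your scheme distinguishes $\rhohat>1$ from $\rhohat=1$, where the all-$\1$ configuration gives a TIS state that is one giant ``bad run.''

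The paper's proof proceeds quite differently and is worth comparing. After excluding $\0\0$, it treats the remaining offending patterns $\1\0$, $\0\1$, $\1\1$ separately. For $\1\0$ it shows, using \clem{frpart} together with the fact that the density of $\1\0$ substrings is constant in time (stationarity) and that such substrings cannot be created except by translation, that $\Ph_{\muhat}$-a.s.\ every $\1\0$ pair persists forever and drifts to the right (and $\0\1$ to the left); ergodicity then forbids the two types from coexisting, since they would collide. The hypothesis $\rhohat>1$ is then used in an essential way: one takes the minimal $k$ for which $\n\rng12=\1\0$ and $\sum_{i=1}^k\n(i)>k$ occurs with positive probability, classifies the possible forms of $\n\rng1k$ (heavily constrained because $\0\0$ and $\0\1$ are excluded), and shows each evolves with positive probability either to destroy the $\1\0$ pair or to violate the minimality of $k$. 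Finally $\1\1$ is eliminated by showing it cannot be created at all. Some analogue of this excess-mass step, or another mechanism converting $\rhohat>1$ into a contradiction with a positive density of short--short bonds, is exactly what your proposal is missing.
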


 \begin{proof}It is convenient to work primarily in the stack model.  We
observe first that, since $\0\0$ in the stack model corresponds to $000$
in the F-SSEP model, \clem{noincrease} implies that every
$\muhat\in\M_s(\Xh)$ with density $\hat\rho>1$ assigns
zero probability to the set of all configurations containing the
substring $\0\0$.

Now we suppose that $\muhat\in\M_s(\Xh)$ has density $\hat\rho>1$ and is
such that configurations containing the string $\1\0$ occur with nonzero
probability under $\muhat$, and derive a contradiction; we may assume
that $\muhat$ is ergodic.  For $(t,i)\in\bbz^2$ let
$E_{t,i}\subset \Xh_2$ be the event that $\m\in \Xh_2$ satisfies
$\m_t\rng{i}{i+1})=\1\0$; then for $\m\in E_{t,i}$ we know from
\clem{frpart} that there must a.s.~exist times $t_0$ and $t_1$, with
$t_0\le t<t_1$, such that $\m\in E_{s,i}$ for $t_0\le s<t_1$ but
$\m\notin E_{t_0-1,i}$ and $\m\notin E_{t_1,i}$.  Then elementary
consideration of the dynamics, using the fact the $\0\0$ does not occur,
shows that necessarily $\m_{t_0-1}\rng{i-1}{i}=\1\0$, i.e., the $\1\0$
string cannot be ``created'' at sites $(i,i+1)$ but rather ``moves''
there from sites $(i-1,i)$.  But then such a string cannot vanish, since
$\muhat$ is stationary and hence the density of $\1\0$ strings is
constant in time, and again simple considerations show that necessarily
$m_{t_1-1}\rng{i}{i+2})=\1\0\2$ and $m_{t_1}\rng{i+1}{i+2}=\1\0$.  The
conclusion is that, $\Ph_{\muhat}$-a.s., $\1\0$ substrings persist
throughout time, moving to the right in the sense that there exist times
$t_0<t_1<t_2<\cdots$ such that $m_t\rng{i+k}{i+k+1}=\1\0$ for
$t_k\le t<t_{k+1}$.

A similar analysis applies to $\0\1$ strings, except that these move to
the left.  But we claim that $\muhat$ cannot give positive density to both
$\1\0$ and $\0\1$ strings.  For if it did, both would occur with positive
frequency in the same configuration, by the ergodicity of $\muhat$, and
would then ``collide'', that is, the configuration $\1\0\1$ would occur
with positive frequency ($\1\0\0\1$ is ruled out because $\0\0$ is), and
then neither could continue to move without the destruction of the other,
a contradiction.  Thus we may assume, without loss of generality, that
$\muhat$ supports a positive density of $\1\0$ substrings but that,
$\muhat$-a.s., $\0\1$ strings do not occur.

Now let $k$ be the minimal positive integer such that $\muhat(A_k)>0$, where
$A_k:=\{\n\in \Xh\mid\n\rng12=\1\0\hbox{ and }\sum_{i=1}^k\n(i)>k\}$; such
a $k$ exists because $\hat\rho>1$ and we have assumed that
$\muhat(\n\rng12)=\1\0)>0$.  From the fact that $\0\0$ and $\0\1$ do not
occur it follows that if $\n_t\in A_k$ then $\n_t\rng1k$ has one of two
forms: (i)~$\n_t\rng1k=\1\0\2(\0\2)^p\1^qx$, with $x\ge2$, or
$(ii)~\n_t\rng1k=\1\0(\2\0)^py$, with $y\ge3$; here $p$ and $q$ are
nonnegative integers.  We show that each of these cases leads to a
contradiction.

If (i) holds then one checks, by considering various special cases, that
with probability at least $1/2$ either (i.a)~neither $\n_{t+1}\rng12=\1\0$
nor $\n_{t+1}\rng23=\1\0$, contradicting the possible evolution of a
$\1\0$ pair as discussed above, or (i.b)~$\tau^{-1}\n_{t+1}$ lies in case
(i) of $A_{k-2}$, contradicting the minimality of $k$.  For example, if
$p=q=0$, so that $k=4$ and $\n_t\rng14=\1\0\2x$, then $\n_t(2:3)=\1\2$
with probability $1/2$ (case 1.a)), while if $q\ge2$ then
$n_{t+1}\rng2{k-2}=\1\0\2(\0\2)^p\1^{q-2}2$, so that
$\tau^{-1}\n_{t+1}\in A_{k-2}$ (case (1.b)).

Suppose then that (ii) holds.  If $p=0$ then, as in case (i.a), the
$\1\0$ disappears, and if $p\ge1$ and $y\ge4$ then
$\tau^{-1}\n_{t+1}\in A_{k-1}$; each of these outcomes contradicts our
assumptions.  If $p\ge1$ and $y=3$ then we must consider an additional
site, and so write $\n_t(k+1)=z$.  If $z\ge2$
then, with probability 1/2, $\n_{t+1}\rng2k=\1\0\2(\0\2)^{p-1}\3$ and so
$\tau^{-1}\n_{t+1}\in A_{k-1}$.  Finally, if $z=1$ or $z=0$ then
$\n_{t+1}\rng2k=\1\0\2(\0\2)^{p-1}\1z'$ with $z'\ge2$, so that 
$\tau^{-1}\n_{t+1}$ lies in $A_k$ but falls under case (i), and this is a
contradiction by the analysis of the previous paragraph.  For this last
step we need to observe that if $z=0$ then necessarily $\n_t(k+2)\ge2$,
since neither $\0\0$ nor $\0\1$ can occur in $\n_t$. 

To complete the proof we must show that, $\muhat$-a.s., the string
$\1\1$ cannot occur in $\n$.  But, as for the initial analysis of $\1\0$
above, if $\m\in \Xh_2$ satisfies $\m_t\rng{i}{i+1}=\1\1$ then,
$\Ph_{\muhat}$-a.s., there must be a time $t_0\le t$ with
$\m_{t_0}\rng{i}{i+1}=\1\1$ but $\m_{t_0-1}\rng{i}{i+1}\ne\1\1$, and it
is easy to see by considering various possible values of
$\n_{t_0-1}\rng{i-1}{i+1}$ that this cannot happen. \end{proof}

In the remainder of \csect{rhobig} we study the ETIS states of the SSM
considered as a process with state space $\Xha$.  \cthm{tricky} justifies
this choice for states with density $\rhohat>1$.  Moreover, 
the state $\muhat^{(2)}$ introduced in \csect{intro} was shown in
\csect{rhohalf} to be an ETIS state of density $\rhohat=1$ for the SSM;
$\muhat^{(2)}$ is supported on $\Xha$ (and is the only such state).  It
is convenient to include this state in our study, so that from now on we
assume that $\rhohat\ge1$.

The dynamical rules of the SSM simplify on $\Xha$ as follows: if
$\n_t(i)$ is $\0$ or $\1$ then in the transition to $\n_{t+1}$ a particle
moves from each of the sites ${i+1}$ and ${i-1}$ to $i$, while if
$\n(j),\n(j+1)\ge\2$ a particle moves from $j$ to $j+1$ or the reverse,
each with probability $1/2$.  Since at each time step a particle must
move across each bond, the parity $(-1)^{\n(i)}$ of the stack height at
each site $i$ is conserved.  Let $S:=\{0,1\}^\bbz$ be the space of {\it
parity sequences}, define the {\it parity map} $\P:\Xha\to S$ by
$\P(\n)(i)=(1-(-1)^{\n(i)})/2$, and for $\sigma\in S$ define the {\it
parity sector} $\Xha_\sigma\subset \Xha$ by
$\Xha_\sigma:=\P^{-1}(\{\sigma\})$, so that $\n\in \Xha_\sigma$ iff, for
all $i$, $\n(i)$ and $\sigma(i)$ have the same parity.

Since for each $\sigma\in S$ the parity sector $\Xha_\sigma$ is invariant
under the dynamics, we obtain by restriction a dynamical system on each
$\Xha_\sigma$.  In \csect{even} below we discuss the stationary states in
the {\it even sector} $\Xha_e$, where $e$ is the parity sequence
satisfying $e(i)=0$ for all $i$, and in \csect{stkmeas} and \capp{others}
show how these give rise to all the ETIS states on $\Xha$.

\subsection{ETIS states in the even sector\label{even}}

Our next result describes a family of ETIS states for the SSM in the
even sector; the proof will be given shortly.

\begin{theorem}\label{unique} For each $\rhohat_e\ge1$ there is an
ETIS state $\mu^{(\rhohat_e)}_e$ on $\Xha_e$, of density
$\rhohat_e$; $\mu^{(\rhohat_e)}_e$ is a grand-canonical Gibbs state for
the statistical-mechanical system with state space $\Xha$ and one-body
potential
 \be\label{defV}
V(n):=2\ln2\,\delta_{n0}.
 \ee
  For $\rhohat_e=1$, $\mu_e^{(\rhohat_e)}=\muhat^{(2)}$ and hence is
ergodic but not weakly mixing.  For $\rhohat_e>1$, $\mu^{(\rhohat_e)}_e$
is mixing.  \end{theorem}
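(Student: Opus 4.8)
The plan is to construct the candidate state $\mu^{(\rhohat_e)}_e$ explicitly as a grand-canonical Gibbs measure on $\Xha_e$ with one-body potential $V(n)=2\ln2\,\delta_{n0}$, verify that it is stationary for the SSM dynamics restricted to the even sector, and then establish its mixing properties. First I would set up the Gibbs measure: on the even sector a configuration is determined by choosing, at each site, an even stack height, subject only to the hard-core constraint that no two adjacent sites both carry a short stack (i.e. no two adjacent $\0$'s). Introducing a chemical potential $\mu$ conjugate to the particle number, the one-body weight at a site with height $n$ is $e^{-V(n)}e^{\mu n}=2^{-\delta_{n0}}e^{\mu n}$; since only the heights $0,2,4,\dots$ occur, the single-site partition function is $\frac12 + \sum_{k\ge1}e^{2k\mu}=\frac12+\frac{e^{2\mu}}{1-e^{2\mu}}$ for $\mu<0$. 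Such a measure is a one-dimensional Gibbs state for a finite-range (nearest-neighbor exclusion) interaction, so by the standard transfer-matrix/transfer-operator formalism it is unique for each $\mu$, translation invariant, and has exponentially decaying correlations; hence it is mixing (indeed $\psi$-mixing) whenever the transfer matrix is primitive, which it is as soon as stacks of height $\ge2$ have positive probability, i.e. for $\rhohat_e>1$. The density $\rhohat_e=\mu^{(\rhohat_e)}_e(\n(0))$ is a continuous, strictly increasing function of $\mu$ on $(-\infty,0)$ ranging over $(1,\infty)$ — at the endpoint $\mu\to-\infty$ only heights $0$ and $2$ survive with the $\0$ weight halved, giving exactly $\muhat^{(2)}$ and density $1$ — so for each $\rhohat_e\ge1$ there is a unique admissible $\mu$, which pins down $\mu^{(\rhohat_e)}_e$.

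The substantive step is proving stationarity. Here I would use a detailed-balance-type or, more likely, a direct bond-by-bond argument adapted to the synchronous dynamics on $\Xha_e$. Recall the simplified rules on $\Xha$: across a bond $\langle j,j+1\rangle$ a particle moves deterministically toward any short ($\le\1$) endpoint and moves in a fair random direction if both endpoints are tall ($\ge\2$). On the even sector short means height $0$ and tall means height $\ge2$. The plan is to show that the product structure of the Gibbs weights, together with the self-duality of the rule "a $\0$ is flanked on each side by non-$\0$, and receives one particle from each neighbor," makes the one-step pushforward of $\mu^{(\rhohat_e)}_e$ again equal to $\mu^{(\rhohat_e)}_e$. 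Concretely, I would compute the transition probabilities on finite windows and check that the Gibbs-measure weights satisfy the balance equations $\sum_{\n}\muhat(\n)\Qh(\n,\n')=\muhat(\n')$ locally; the key algebraic identity is that the factor $2^{-\delta_{n0}}$ exactly compensates the factor of $2$ coming from the random choice of direction across a bond between two tall stacks, so that the "cost" of creating versus destroying a height-$0$ site balances. I expect this to reduce, after the transfer-operator reformulation, to checking an eigenvalue relation for a modified transfer matrix encoding one time step of the dynamics, or equivalently to a short case analysis over the possible local patterns $\n\rng{i-1}{i+1}$ and their images.

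I would then record the remaining claims. Ergodicity and non-weak-mixing of $\muhat^{(2)}$ are immediate from its definition as $(\delta_{\n^{(2)}}+\delta_{\tau\n^{(2)}})/2$ (a period-two orbit), already noted in Section~\ref{prelims}; mixing for $\rhohat_e>1$ follows from the exponential clustering of the nondegenerate Gibbs measure, since mixing for a TI measure on a product space is implied by $\mu(A\cap\tau^nB)\to\mu(A)\mu(B)$ for cylinder $A,B$, which the transfer-operator spectral gap gives. Finally, that $\mu^{(\rhohat_e)}_e$ is an ETIS state (not merely TIS): being a nondegenerate one-dimensional Gibbs state it is ergodic under translations, hence extremal in the TI states, and a fortiori extremal among TIS states.

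The main obstacle I anticipate is the stationarity verification: because the dynamics is synchronous (every bond fires simultaneously) rather than a continuous-time single-bond process, the one-step kernel $\Qh$ does not factor over bonds in an obvious way, and one must be careful that the deterministic "absorption at short stacks" and the random "tall–tall" moves interact correctly at a $\0\2$ interface — a $\0$ pulls a particle from a neighboring $\2$, which may simultaneously be pulled the other way. I would handle this by the transfer-matrix route: write $\mu^{(\rhohat_e)}_e$ via a nonnegative transfer matrix $T$, write the one-step-evolved measure via a matrix $T'$ built from the local transition probabilities, and verify $T'=T$ (equivalently that they have the same two-point and, by the Markov-chain structure, all finite-dimensional marginals). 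The bookkeeping in that identity — tracking the halving factor $2^{-\delta_{n0}}$ against the $\frac12$ from random bond directions — is where all the work lies, but it is a finite, mechanical check once the framework is in place.
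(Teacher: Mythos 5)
Your overall architecture matches the paper's: construct the state as a one-dimensional Gibbs measure via the transfer-matrix formalism, get existence, the density range, and mixing from the spectral gap, identify the $\zeta\to0$ (fugacity~$\to0$) endpoint with $\muhat^{(2)}$, and deduce extremality from ergodicity under translations. But the step you yourself identify as the substantive one --- stationarity under the synchronous dynamics --- is both left unexecuted and set up with a miscounted compensation factor, and that is a genuine gap. You take the one-body weight to be $e^{-V(n)}=2^{-\delta_{n0}}$ and assert that ``the factor $2^{-\delta_{n0}}$ exactly compensates the factor of $2$ coming from the random choice of direction across a bond between two tall stacks.'' Since $V(n)=2\ln2\,\delta_{n0}$, the weight is in fact $4^{-\delta_{n0}}$, and this is not a typo one can wave away: a height-$\0$ site determinizes \emph{both} of its incident bonds (each neighbor, necessarily tall, must send a particle in), so each zero removes \emph{two} factors of $\tfrac12$ from the transition probability and must be penalized by $2^{-2}$ in the invariant weight. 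With your $2^{-1}$ per zero the balance equations you propose to check would fail, and the measure you would construct is not the stationary one.

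The paper also avoids the infinite-volume difficulty you flag (that the one-step kernel does not factor over bonds) by a cleaner route: work first on a ring of $2L+1$ sites with a fixed even particle number $N\ge2L+2$. There the transition probability is exactly $P(\n,\n')=2^{-(2L+1-2z(\n))+\delta_{\n,\n'}}$ with $z(\n)$ the number of zero-height sites, so $\muLN(\n):=\ZLN^{-1}2^{-2z(\n)}$ gives $\muLN(\n)P(\n,\n')=\ZLN^{-1}2^{-(2L+1)+\delta_{\n,\n'}}$, which is symmetric in $\n,\n'$; detailed balance is thus an exact finite identity, and irreducibility gives uniqueness on the ring. One then grand-canonicalizes, takes $L\to\infty$ via the transfer matrix (as you do), and transfers stationarity to the limit by observing that $Q^{(L)}(\n,A^{(L)})$ depends only on a bounded window of $\n$ and converges to the infinite-volume kernel on cylinder events. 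If you adopt this finite-ring detailed-balance computation --- with the corrected weight $2^{-2z(\n)}$ --- your outline closes; as written, the central claim of the theorem is not established.
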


As we will see in \csect{stkmeas} and \capp{others}, for a full discussion of
the ETIS states of the SSM on $\Xha$ we need to know all stationary
states on $\Xha_e$, both TI and non-TI (should any of the latter exist).
We make the

\begin{conjecture}\label{main}For each $\rhohat_e\ge1$,
$\mu_e^{(\rhohat_e)}$ is the unique stationary state of the SSM on
$\Xha_e$ with density $\rhohat_e$.  \end{conjecture}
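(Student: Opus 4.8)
The most natural line of attack on \cconj{main} is a coupling argument comparing an arbitrary stationary state on $\Xha_e$ with the explicit Gibbs state $\mu_e^{(\rhohat_e)}$ of \cthm{unique}. The starting point is a structural simplification already implicit in \csect{rhobig}: on $\Xha_e$ every bond with a height-$0$ endpoint carries a deterministic flow (a particle moves toward the short stack), so the only randomness in one step of the dynamics is the direction of the flow across each bond both of whose endpoints are tall. Fix $\muhat$ stationary on $\Xha_e$ with density $\rhohat_e$; we treat the translation-invariant case first and comment on the general one below. Run two copies $(\n_t,\n'_t)$ of the $\Xha_e$-dynamics under a \emph{basic coupling}: use a single shared coin at every bond that is tall--tall in both copies, and some fixed rule at the remaining bonds. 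Starting from the independent coupling $\muhat\times\mu_e^{(\rhohat_e)}$ and passing to a weak limit point of the time averages of the coupled process yields a translation-invariant measure $\hat\nu$, stationary under the coupled dynamics, with marginals exactly $\muhat$ and $\mu_e^{(\rhohat_e)}$ (these are preserved because both marginals are already stationary); the problem is then reduced to showing that $\hat\nu$ is concentrated on the diagonal.

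The engine for the diagonal-support step is particle conservation. Writing $g_\n(k):=\sum_{i=1}^{k}\n(i)$ for $k>0$ (and the analogous partial sums for $k\le0$), the value $g_\n(k)$ changes in one step only through the net flows across the two bonds $\langle0,1\rangle$ and $\langle k,k+1\rangle$; hence under a basic coupling the discrepancy $D(k):=g_\n(k)-g_{\n'}(k)$ is altered only at those two bonds, and the substantive claim one needs is that the coupling rule can be chosen so that $D(\cdot)$ stays monotone in $k$ for all time --- equivalently, that $\n$ and $\n'$ remain pointwise ordered (or at least ordered with respect to $g$). Granting this, $\hat\nu$ is concentrated on ordered pairs, so on the piece where (say) $\n\ge\n'$ everywhere --- a translation-invariant event --- the conditional measure is translation invariant with both marginals supported on $\Xh_{\rhohat_e}$, whence $\mathbb{E}[\n(0)-\n'(0)]=0$; since also $\n(0)-\n'(0)\ge0$ there, $\n(0)=\n'(0)$ a.s., and translation invariance gives $\n=\n'$ a.s. Thus $\muhat=\mu_e^{(\rhohat_e)}$. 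For a stationary state $\muhat$ that is not translation invariant one would first apply this to the translation-invariant weak limit points of the translation averages $\frac1N\sum_{k=0}^{N-1}\tau^k_*\muhat$, and then show separately that a stationary state all of whose translation averages equal $\mu_e^{(\rhohat_e)}$ must itself be $\mu_e^{(\rhohat_e)}$ --- a point that is not automatic and would need its own argument.

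The step I expect to be the real obstacle, and the reason the statement is only a conjecture, is precisely the monotonicity (attractiveness-type) claim for the coupled dynamics. The SSM is not attractive in the ordinary sense: when two discrepancies of opposite sign meet inside a tall block, the forced rerouting of particles at an adjacent height-$0$ site can \emph{create} a new pair of discrepancies --- the same collision phenomenon that underlies the $11011$ analysis of \clem{frpart} and the $\1\0\1$ analysis of \cthm{tricky} --- so the basic coupling need not preserve pointwise order, and it is unclear whether any coupling preserves order with respect to $g$ either. Getting around this would require either a more delicate coupling that tracks individual ``second-class'' discrepancies through such collisions and shows the signed discrepancy current is nevertheless conserved, or a genuinely different route: via the finite-ring models of \crem{finite}, show that on each ring with a fixed total mass the $\Xha_e$-dynamics has a unique invariant measure (for $\rhohat_e>1$; the case $\rhohat_e=1$ is essentially covered by \cthm{leftright}), identify its $N\to\infty$ thermodynamic limit with $\mu_e^{(\rhohat_e)}$ by an equivalence-of-ensembles estimate using the explicit one-body form \eqref{defV}, and then argue that every stationary state on $\Xha_e$ of density $\rhohat_e$ arises as such a limit; or, in a third direction, exploit the fact that the relative-entropy density with respect to $\mu_e^{(\rhohat_e)}$ is non-increasing along the dynamics and try to prove that a stationary state which preserves it must coincide with $\mu_e^{(\rhohat_e)}$. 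In each case the crux is the same --- controlling the interplay between the rigid, deterministic evolution forced at the height-$0$ sites and the diffusive randomness carried by the currents through the tall blocks --- and it is exactly that interplay we have not managed to tame.
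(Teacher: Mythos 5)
This statement is a \emph{conjecture} in the paper: the authors explicitly leave it open, derive the classification of high-density ETIS states in \csect{stkmeas} only \emph{under the assumption} that it holds, and devote \capp{others} to analyzing what the ETIS states would look like if it fails. So there is no proof in the paper to compare yours against, and your proposal --- which by your own admission stops short of a proof at the monotonicity step --- is an honest assessment rather than a gap in the usual sense. Your diagnosis of the obstruction is sound and consistent with the paper's treatment: the standard Liggett-style route (stationary coupling with the Gibbs state, attractiveness to force ordered pairs, equal densities to force the diagonal) founders because the SSM dynamics on $\Xha_e$ is not attractive --- the deterministic rerouting forced at height-$0$ sites can create new discrepancy pairs when discrepancies of opposite sign meet, exactly the collision phenomenon the paper exploits in \clem{frpart} and \cthm{tricky}. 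Note also that the paper's \crem{ulmu} records that for the continuous-time model the analogous uniqueness \emph{was} proved by a coupling (with the ordinary ASEP), which supports your view that a coupling is the natural, if so far untamed, route here.

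Two smaller points. First, a piece of your second proposed route is already in the paper: in \csect{even} the authors show that on each finite ring with fixed even particle number the dynamics is irreducible on $\XhaNL_e$ and satisfies detailed balance with respect to $\muLN$, so the invariant measure there is unique; what is missing is precisely the passage from finite-ring uniqueness to uniqueness among all infinite-volume stationary states of given density. Second, you are right to flag that the conjecture quantifies over \emph{all} stationary states on $\Xha_e$, not just translation-invariant ones, and that reducing to translation averages is not automatic; but your parenthetical that the case $\rhohat_e=1$ is ``essentially covered by \cthm{leftright}'' overstates matters for the same reason --- that theorem classifies only the TIS states of density $1$, whereas \cconj{main} would also require ruling out non-TI stationary states on $\Xha_e$ at density $1$.
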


\noindent
In \csect{stkmeas} we will discuss the ETIS states on $\Xha$ under the
assumption that \cconj{main} holds, and in \capp{others} turn to the
general case.

We begin our discussion of \cthm{unique} with the consideration of the
SSM on a ring of $2L+1$ sites, indexed by $I_L:=\{-L,\ldots,L\}$; the
configuration space $\XhaL_e$ is the set of elements $\n\in\bbz_+^{I_L}$
for which $\n(i)$ is even for all $i$ and for which no two adjacent sites
both have height zero.  For the moment we take a fixed number $N$ of
particles, with $N$ even and $N\ge2L+2$, with corresponding configuration
space $\XhaNL_e\subset\XhaL_e$.  For $\n\in\XhaNL_e$ let $z(\n)$ be the
number of sites $i$ with $\n(i)=\0$.  In a transition from $\n_t$ to
$\n_{t+1}$ the direction of particle movement across $2z(\n_t)$ bonds is
determined and across the remainder is chosen randomly; moreover, a given
transition can occur via at most one set of these choices unless
$\n_t=\n_{t+1}$, in which case there are two possibilities (this occurs
iff $z(\n_t)=0$).  Thus the probability $P(\n_t,\n_{t+1})$ of such a
transition, if nonzero, is $2^{-(2L+1-2z(\n_t))+\delta_{\n_t,\n_{t+1}}}$.
Then a TIS state $\muLN$ is given by $\muLN(\n)=\ZLN^{-1}2^{-2z(\n)}$,
with $\ZLN^{-1}$ a normalizing constant, since because $P(\n,\n')=0$ iff
$P(\n',\n)=0$, $\muLN$ satisfies the detailed balance condition
$\muLN(\n)P(\n',\n)=\muLN(\n')P(\n,\n')$.  It is straightforward to check
that the dynamics permits transition from any configuration in $\XhaNL_e$
to any other, so that $\muLN$ is the unique TIS state.

The state $\muLN$ is a Gibbs measure arising from the one-particle
potential $V(n)$ of \eqref{defV}, that is,
$\muLN(\n)=\ZLN^{-1}\prod_{i=-L}^Le^{-V(\n(i))}$. (One may also view
$\muLN$ as a Gibbs measure on the space of all $N$-particle
configurations, with one- and two-body hard core potentials, that is,
formal potentials taking infinite values, that impose the restrictions of
$\XhaL_e$.)  In order to pass to the $L\to\infty$ limit it is convenient
to consider the grand canonical measure with fugacity $\zeta\ge0$:
 \be\label{muzetaL}
\muLz:=\Xi_{L,\zeta}^{-1}\sum_{\n\in\XhaL_e}\zeta^{\sum_i\n(i)-2L-2}
e^{-\sum_iV(\n(i))},
 \ee
 with $\Xi_{\zeta,L}^{-1}$ again a normalizing constant.

\begin{lemma}\label{Gibbs} The limiting measure
$\muz=\lim_{L\to\infty}\muLz$ exists for $0\le\zeta<1$ and is a TIS
state of density $1/(1-\zeta)$ for the SSM on $\Xha_e$.  Moreover,
$\mu^{(0,\infty)}=\mu^{(2)}$ and $\mu^{(\zeta,\infty)}$ is mixing if
$\zeta>1$.  \end{lemma}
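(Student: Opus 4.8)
The plan is to treat $\muLz$ as a finite-volume grand canonical ensemble for a one-dimensional statistical-mechanical system with finite-range (in fact nearest-neighbor) hard-core constraints and a bounded one-body potential, and then to invoke the standard transfer-matrix machinery. First I would rewrite $\muLz$ in transfer-matrix form: because the constraint defining $\XhaL_e$ (no two adjacent zero stacks) is nearest-neighbor and the weight $\zeta^{\sum_i\n(i)}e^{-\sum_i V(\n(i))}$ is a product of one-site factors, the partition function $\Xi_{\zeta,L}$ is $\Tr(T^{2L+1})$ for the transfer operator $T$ on $\ell^2(2\bbz_+)$ with kernel $T(m,n)=\zeta^{(m+n)/2}e^{-(V(m)+V(n))/2}\,\ind_{\{m,n\}\neq\{0,0\}}$ acting on even heights $m,n\in\{0,2,4,\dots\}$. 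For $0\le\zeta<1$ this operator is positive, irreducible, and Hilbert--Schmidt (the entries decay like $\zeta^{(m+n)/2}$), so Krein--Rutman / Perron--Frobenius theory gives a unique largest eigenvalue $\Lambda(\zeta)>0$, simple, with a strictly positive eigenvector $\psi_\zeta\in\ell^2$ and a spectral gap. Standard arguments then yield that the finite-volume measures converge weakly as $L\to\infty$ to the unique Gibbs state $\muz$, whose finite-dimensional marginals are given by the usual formula $\muz(\n\rng{-k}{k})=\Lambda^{-(2k+1)}\psi_\zeta(\n(-k))\,\bigl[\prod_{i=-k}^{k-1}T(\n(i),\n(i+1))\bigr]\,\psi_\zeta(\n(k))$ up to the obvious one-body weight bookkeeping; this is translation invariant by construction.

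Next I would identify the density. Differentiating $\log\Xi_{\zeta,L}$ with respect to $\log\zeta$, or equivalently computing $\muz(\n(0))$ directly from the transfer-matrix marginal, gives the expected stack height; a short computation with the explicit potential $V(n)=2\ln2\,\delta_{n0}$ should produce the clean value $\rhohat=1/(1-\zeta)$. (One can cross-check this against the finite-$N$ identity $\sum_i\n(i)=N$ and the relation between fugacity and density in the canonical-to-grand-canonical passage, and against the $\zeta=0$ endpoint computed below.) Stationarity of $\muz$ under the SSM dynamics on $\Xha_e$ follows by taking the $L\to\infty$ limit of the finite-volume stationarity $\muLz=\muLz Q^{(L)}$: each finite-dimensional marginal of $\muLz Q^{(L)}$ depends only on a bounded neighborhood (the dynamics is local), so one passes to the limit in the defining equation $\muz=\muz\Qh$ marginal by marginal, using the already-established weak convergence. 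The detailed-balance identity $\muLN(\n)P(\n',\n)=\muLN(\n')P(\n,\n')$ established just before the lemma is what makes $\muLz$ stationary in finite volume, so this is really just a limiting argument.

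For the endpoint $\zeta=0$: the weight $\zeta^{\sum_i\n(i)-2L-2}$ forces $\sum_i\n(i)=2L+2$ on a ring of $2L+1$ sites with all heights even and no adjacent zeros, which (by a small combinatorial argument — $2L+2$ units distributed over $2L+1$ even-height sites with the no-adjacent-zeros constraint) leaves only the two alternating configurations $\dots 0\,2\,0\,2\dots$, each with weight $2^{-2z}$ with $z=L$ or $z=L+1$; taking $L\to\infty$ gives $\mu^{(0,\infty)}=\muhat^{(2)}=\mu^{(2)}$, matching the density $1/(1-0)=1$. Finally, mixing for $\zeta>0$ is immediate from the spectral gap: the transfer operator has a simple dominant eigenvalue strictly separated from the rest of the spectrum (for $0<\zeta<1$ the operator is irreducible and aperiodic, since e.g.\ $T(2,2)>0$ breaks the bipartite structure that $\muhat^{(2)}$ exhibits at $\zeta=0$), so correlations decay exponentially in the separation, which gives mixing; at $\zeta=0$ the operator restricted to the support is the $2\times2$ swap, with eigenvalues $\pm1$, so there is no gap and the state is ergodic but only has period-two (not weakly mixing) behavior, consistent with the claimed statement for $\muhat^{(2)}$. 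The main obstacle I anticipate is not any single step but the bookkeeping needed to make the transfer-matrix formalism rigorous on the infinite state space $2\bbz_+$ — verifying the Hilbert--Schmidt bound, the strict positivity of the Perron eigenvector, and the uniqueness of the limiting Gibbs measure — together with making the ``pass to the $L\to\infty$ limit in the stationarity equation'' argument airtight despite the unboundedness of stack heights; the density computation and the $\zeta=0$ identification are routine by comparison.
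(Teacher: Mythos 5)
Your proposal follows essentially the same route as the paper: a transfer-matrix representation of $\muLz$, Perron--Frobenius for the limit and for mixing via the spectral gap (the paper simply observes that the transfer operator is rank $2$ and computes both nonzero eigenvalues explicitly, which makes the gap and the correlation decay rate completely concrete), the density read off from the one-site marginal, and stationarity of $\muz$ obtained by passing to the $L\to\infty$ limit in the finite-volume detailed-balance/stationarity identity. The one slip is in the $\zeta=0$ endpoint: on a ring of odd length $2L+1$ there is no perfectly alternating configuration, so it is not ``only the two alternating configurations'' that survive; the minimizers of $\sum_i\n(i)$ are the $2L+1$ near-alternating configurations with exactly $L$ zeros and $L+1$ stacks of height $\2$ (all with the same weight $2^{-2L}$), and it is the uniform measure on these translates that converges locally to $\muhat^{(2)}$ --- the conclusion stands, but the count needs this correction.
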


\begin{proof} The case $\zeta=0$ follows immediately from
the fact, evident from \eqref{muzetaL}, that $\mu^{(0,L)}$ gives equal
weight to the $2L+1$ configurations in $\XhaNL_e$ with
$\sum_i\n(i)=2L+2$.  From now on we assume that $0<\zeta<1$.

 We can prove the existence of $\muz$, and also calculate many of its
properties, using the standard transfer matrix formalism; we give only a
sketch.  Let us think of
$\ell^2=\{(x_i)_{i=0}^\infty\mid\sum x_i^2<\infty\}$ as a space of column
vectors, with $u^T$ denoting the transpose of the vector $u$, and define
$u,v\in\ell^2$ by $u_i:=\delta_{i0}$ and $v_0:=0$, $v_i:=\zeta^i$ if
$i\ge1$.  Let $\T:=(uv^T+vu^T)/2+vv^T$; then for $\n=2\i\in \XhaL_e$,
 \be
\muLz(\n) = \Xi_{\zeta,L}^{-1}\T_{\i(-L)\i(-L+1)}\T_{\i(-L+1)\i(-L+2)}
  \cdots \T_{\i(L-1)\i(L)}\T_{\i(L)\i(-L)},
 \ee
 with $\Xi_{\zeta,L}$ the trace of $\T^{2L+1}$.  $\T$ is a rank 2 operator
with nonzero eigenvalues $\lambda_1=\zeta/(2(1-\zeta))$ and
$\lambda_2=-\zeta/(2(1+\zeta))$; the eigenvector associated with
$\lambda_1$, the larger in magnitude, is $w=(\zeta/(1+\zeta))u+v$, so
that
 \be\label{limit}
 \lim_{n\to\infty}\lambda_1^{-n}\T^n=\|w\|^{-2}w^Tw.
 \ee
   Thus for $\m=2\i\in\bbz_+^{\{-K,\cdots K\}}$ and $E$ the event that
$\n\rng{-K}K=\m\rng{-K}K$ we have
 \begin{align}
 \muz(E)\nonumber
  &:=\lim_{L\to\infty}\muLz(E)\\
  &={\cal Y}_{\zeta,K}^{-1}
    w_{\i(-K)}\T_{\i(-K)\i(-K+1)}\cdots \T_{\i(K-1)\i(K)}w_{\i(K)}\label{muz}
 \end{align}
 with 
 \be\label{calY}
{\cal Y}_{\zeta,K}=\lambda_1^{2K}\|w\|^2
  = \frac{\zeta^{2K+2}}{(1+\zeta)^22^{2K-1}(1-\zeta)^{2K+1}}.
 \ee
 Taking $K=0$ in \eqref{muz} we find that 
 \be\label{ssd}
\muz(\n(0)=2\i)={\cal Y}_{\zeta,0}^{-1}w_{\i}^2
 = \begin{cases}(1-\zeta)/2,&\text{if $\i=0$},\\
  (1+\zeta)^2(1-\zeta)\zeta^{2\i-2}/2,&\text{otherwise,}
 \end{cases}
 \ee
 from which we find the density
$\rhohat(\zeta):=\muz(\n(0))=1/(1-\zeta)$.  From \eqref{limit} and
\eqref{muz} it follows that if $f,g:\Xha\to\bbr$ each depend only on the
values of the configuration at a finite number of sites then
$\muz(f\tau^ng)\sim Ce^{-(\lambda_1-|\lambda_2|)n}$ as $n\to\infty$, so
that $\muz$ is mixing.

 The stationarity of $\muz$ can be verified from the explicit formulas
\eqref{muz} and \eqref{calY}, but it is simpler to argue from the
stationarity of $\muLz$.  Take $L>K>0$, suppose that
$A^{(K)}\subset \XhaK_e$, and let $A$, respectively $A^{(L)}$, be the
set of $\n\in \Xha_e$, respectively $\n\in\XhaL_e$, such that
$\n\rng{-K-1}{K+1}\in A^{(K)}$.  If $Q$ and $Q^{(L)}$ are the
transition kernels for the SSM on $\Xha$ and $\XhaL$, respectively, then the
stationarity of $\muLz$ implies that
 \be\label{Lsta}
\int_{\XhaL}Q^{(L)}(\n,A^{(L)})\muLz(d\n)=\muLz(A^{(L)}.
 \ee
 Now $Q^{(L)}(\n,A^{(L)})$ depends only on $\n\rng{-K-1}{K+1}$ and
$Q^{(L)}(\n,A^{(L)})=Q(\n',A)$ if $\n\rng{-K-1}{K+1}=\n'\rng{-K-1}{K+1}$,
so that taking the $L\to\infty$ limit in \eqref{Lsta} yields
$\int_{\Xha}Q(\n,A)\muz(d\n)=\muz(A)$, the stationarity of $\muz$.
\end{proof}

\begin{proofof}{\cthm{unique}}The theorem follows immediately from
\clem{Gibbs} via
$\nu^{(\rhohat_e)}_e=\nu^{((\rhohat_e-1)/\rhohat_e),\infty)}$.
\end{proofof}

\begin{remark}\label{alt} There is an alternative way to describe the
state $\muz$ (and hence also $\mu_e^{\rho_e}$).  Consider the image of
$\muz$ under the map $F:\Xha\to\{0,1\}^\bbz$ defined by
$F(\n)(j)=\min\{1,\n(j)\}$, which effectively classes sites simply as
occupied or empty.  The image measure $F_\muz$ is again Gibbisan, with
no interactions other than the exclusion of configurations containing
adjacent holes, so that it is, after an interchange of the roles of
particles and holes, the equilibrium state of the familiar {\it
nearest-neighbor hard core} model.  The holes in this system have
effective fugacity $(1-\zeta^2)/(2\zeta)^2$ (relative to a fugacity of 1
for the particles) and from \eqref{ssd} the density of holes is
$(1-\zeta)/2$.  The full state $\muz$ is then obtained by first
conditioning $F(\n)=\eta$ for some $\eta\in\{0,1\}^\infty$ with no
adjacent holes, distributed according to $F_*\muz$, and then distributing
particles on each site $j$ for which $\eta(j)=1$ independently, with
distribution $\mu\{\n(j)=2i\}=\zeta^{2i-1}/(1-\zeta^2)$,
$i=1,2,\ldots$.\end{remark}

\subsection{ETIS states for  the SSM\label{stkmeas}}

We now discuss the passage from stationary states of the SSM on the even
sector to general high-density ETIS states, specifically, to states with
$\rhohat>1$ as well as the special state $\mu^{(2)}$ with $\rhohat=1$.
By \cthm{tricky}, these are precisely the ETIS states on $\Xha$.  Let us
define $\gamma:\Xha_e\times S\to \Xha$ by $\gamma(\m,\sigma)=\m+\sigma$
(with component-wise addition: $(\m+\sigma)(i):=\m(i)+\sigma(i)$).  Note
that for fixed $\sigma\in S$, $\gamma(\cdot,\sigma)$ is a bijection of
$\Xha_e$ with $\Xha_\sigma$.  We define the dynamics in $\Xha_e\times S$
to be constant on $S$.

\begin{lemma}\label{gamma} (a) A measure $\mu$ on $\Xha$ is an ETIS state
for the SSM iff $\tilde\mu:=\gamma^{-1}_*\mu$ is an ETIS state on
$\Xha_e\times S$. 
 \par\noindent
(b) Each ETIS state $\tilde\mu$ of density $\rhohat$ on
$\Xha_e\times S$ has the form
$\tilde\mu(d\n\,d\sigma)=\mu_\sigma(d\n)\lambda(d\sigma)$, where
$\lambda$ is an ergodic TI probability measure on $S$ and the
$\mu_\sigma$, $\sigma\in S$, are stationary probability measures on
$\Xha_e$ satisfying $\mu_{\tau\sigma}=\tau_*\mu_\sigma$.  Moreover, if
$\lambda$ has density $\kappa$ then $\lambda$-almost all $\mu_\sigma$
have density $\rhohat_e=\rhohat-\kappa$.\end{lemma}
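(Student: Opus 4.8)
\textbf{Proof proposal for \clem{gamma}.}

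The plan is to prove (a) and (b) together, exploiting the fact that $\gamma$ is a continuous bijection with continuous inverse (given by $\sigma=\P(\n)$ and $\m=\n-\sigma$) that intertwines the SSM dynamics on $\Xha$ with the product dynamics on $\Xha_e\times S$, since a move in the SSM preserves parities and $\gamma$ simply records the parity sequence separately from the ``even part.''  First I would make the intertwining precise: if $\Qh$ is the SSM kernel on $\Xha$ and $\widetilde Q$ the kernel of the product dynamics (which acts as the induced dynamics on the $\Xha_e$ coordinate and as the identity on $S$), then $\widetilde Q(\gamma^{-1}(\cdot),\gamma^{-1}(\cdot))=\Qh(\cdot,\cdot)$; this requires only checking that the simplified dynamical rules on $\Xha$ quoted just before the lemma act on $\m=\n-\P(\n)$ in a way that does not depend on $\sigma=\P(\n)$, which is immediate because which bonds are ``forced'' versus ``random'' depends only on which stacks are short, i.e.\ on whether $\n(i)\in\{\0,\1\}$, and short stacks are exactly those with $\m(i)=\0$.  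Given the intertwining, $\gamma_*$ and $\gamma^{-1}_*$ are mutually inverse affine homeomorphisms between TI measures on the two spaces that preserve stationarity and hence preserve extremality in the class of TIS states; this gives (a).

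For (b), I would first observe that the marginal of $\tilde\mu$ on $S$, call it $\lambda$, is a TI stationary measure for the trivial (identity) dynamics on $S$, hence simply a TI measure; moreover $\lambda$ must be ergodic under translations, for otherwise a nontrivial decomposition $\lambda=\frac12(\lambda_1+\lambda_2)$ would lift, via disintegration over the $S$-coordinate, to a nontrivial decomposition of $\tilde\mu$ into TIS states, contradicting extremality.  Then I would disintegrate $\tilde\mu$ over its $S$-marginal: $\tilde\mu(d\n\,d\sigma)=\mu_\sigma(d\n)\,\lambda(d\sigma)$, with $(\mu_\sigma)$ a $\lambda$-a.e.\ defined family of probability measures on $\Xha_e$.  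Stationarity of each $\mu_\sigma$ follows because the dynamics acts independently on the two coordinates: $\int\Qh^{\Xha_e}(\n,A)\mu_\sigma(d\n)$ and $\mu_\sigma(A)$ agree for $\lambda$-a.e.\ $\sigma$ by uniqueness of the disintegration applied to $\tilde\mu\widetilde Q=\tilde\mu$ tested against sets of the form $A\times B$.  The covariance relation $\mu_{\tau\sigma}=\tau_*\mu_\sigma$ comes from translation invariance of $\tilde\mu$ together with $\tau\circ\gamma=\gamma\circ(\tau\times\tau)$ and, again, uniqueness of the disintegration.

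The density bookkeeping is the last step: if $\lambda$ has density $\kappa$ (meaning $\lambda$ is supported on sequences $\sigma$ with Cesàro average $\kappa$), then for $\lambda$-a.e.\ $\sigma$ the configuration $\n=\m+\sigma$ has density $\rhohat$ iff $\m$ has density $\rhohat-\kappa$, because the Cesàro average of $\n$ is the sum of those of $\m$ and $\sigma$ whenever the latter exist; since $\tilde\mu$ (equivalently $\mu$) has density $\rhohat$, Fubini forces $\mu_\sigma$ to have density $\rhohat-\kappa$ for $\lambda$-a.e.\ $\sigma$.  I expect the main obstacle to be purely technical: making the disintegration argument rigorous requires that $S$ is a nice (Polish) space and that one can choose the conditional measures $\mu_\sigma$ measurably and consistently with translations, and one must be a little careful that ``$\mu_\sigma$ stationary for $\lambda$-a.e.\ $\sigma$'' really does follow from stationarity of $\tilde\mu$ rather than merely ``stationary on average''—this is where the product structure of $\widetilde Q$, i.e.\ that the $S$-coordinate is frozen, is essential, since it lets one test against $A\times\{B\}$ with $B$ ranging over a countable generating algebra and invoke uniqueness of the disintegration on each such set.
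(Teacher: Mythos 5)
Your proposal is correct and follows essentially the same route as the paper: for (a) the key point is that $\gamma$ intertwines translations and the dynamics (the paper states this without the explicit verification you give that forced/random bonds are determined by $\m$ alone), and for (b) the paper likewise disintegrates $\tilde\mu$ over its $S$-marginal, derives stationarity and the covariance relation from the frozen $S$-coordinate and translation invariance, gets ergodicity of $\lambda$ by the same lifting-of-decompositions argument, and obtains the density statement from additivity of Ces\`aro averages. Your version merely spells out the measure-theoretic details (uniqueness of disintegration, testing on product sets) that the paper leaves implicit.
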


\begin{proof} (a) The map $\gamma$ clearly commutes with translations and
with the dynamics.  Thus $\gamma_*$ and $\gamma^{-1}_*$ carry TIS states
to TIS states, and they clearly preserve extremality.

 \par\noindent
 (b) The form $\tilde\mu(d\n\,d\sigma)=\mu_\sigma(d\n)\lambda(d\sigma)$ is
immediate, with $\lambda$ the probability measure on $S$ giving the
distribution of $\sigma$ and $\mu_\sigma$ the conditional probability
measure on $\Xha_e$ given $\sigma$.  Stationarity of $\tilde\mu$ implies
stationarity of each $\mu_\sigma$, and translation invariance of
$\tilde\mu$ yields translation invariance of $\lambda$ and the relation
$\mu_{\tau\sigma}=\tau_*\mu_\sigma$.  $\lambda$ must be ergodic, since a
decomposition of $\lambda$ as a convex combination of TI measures would
yield immediately a decomposition of $\mu$ in terms of TIS measures.
Finally, since $\tilde\mu$-a.s.~configurations have density $\rhohat$,
$\lambda$-a.s.~each $\mu_\sigma$ must have density
$\rhohat_e=\rhohat-\kappa$.  \end{proof}

Throughout the remainder of this section we assume that \cconj{main}
holds.  See \capp{others} for an analysis of the situation when this
assumption is not valid.

\begin{theorem}\label{structure} Suppose that the $\mu_e^{(\rhohat_e)}$
of \cthm{unique} are the only stationary states of the SSM on $\Xha_e$,
i.e., that \cconj{main} holds.  Then the ETIS states with density
$\rhohat>1$ are precisely the states
$\mu^{(\rhohat,\lambda)}:=\gamma_*(\mu^{(\rhohat_e)}_e\times\lambda)$,
with $\lambda$ an ergodic TI probability measure on $S$ of density
$\kappa\le\rhohat-1$ and $\rhohat_e=\rhohat-\kappa$.
\end{theorem}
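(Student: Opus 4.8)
The plan is to obtain \cthm{structure} by assembling the pieces already established: the reduction to $\Xha$ in \cthm{tricky}, the product decomposition of \clem{gamma}, the existence statement of \cthm{unique}, and the assumed uniqueness in \cconj{main}. There are two inclusions to prove: that every ETIS state of density $\rhohat>1$ equals some $\mu^{(\rhohat,\lambda)}$, and conversely that each $\mu^{(\rhohat,\lambda)}=\gamma_*(\mu_e^{(\rhohat_e)}\times\lambda)$, for $\lambda$ ergodic and TI on $S$ of density $\kappa\le\rhohat-1$ and $\rhohat_e=\rhohat-\kappa$, is an ETIS state of density $\rhohat$.

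For the first inclusion, take an ETIS state $\mu$ of density $\rhohat>1$. By \cthm{tricky} it is supported on $\Xha$, so \clem{gamma}(a) gives that $\tilde\mu:=\gamma^{-1}_*\mu$ is an ETIS state of density $\rhohat$ on $\Xha_e\times S$, and \clem{gamma}(b) writes $\tilde\mu(d\n\,d\sigma)=\mu_\sigma(d\n)\lambda(d\sigma)$ with $\lambda$ ergodic and TI on $S$ of some density $\kappa$ and with $\lambda$-almost all $\mu_\sigma$ stationary measures on $\Xha_e$ of density $\rhohat_e:=\rhohat-\kappa$. A one-line count shows that every configuration in $\Xha_e$ has density at least $1$ --- in a window of $N$ sites the at most $\lceil N/2\rceil$ zeros, no two adjacent, force the height sum to be at least $N-1$ --- so $\rhohat_e\ge1$ and hence $\kappa\le\rhohat-1$. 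Now \cconj{main} forces $\mu_\sigma=\mu_e^{(\rhohat_e)}$ for $\lambda$-a.e.\ $\sigma$, so $\tilde\mu=\mu_e^{(\rhohat_e)}\times\lambda$ and $\mu=\gamma_*\tilde\mu=\mu^{(\rhohat,\lambda)}$, as desired.

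For the converse, fix such a $\lambda$, put $\rhohat_e=\rhohat-\kappa\ge1$, and set $\tilde\mu=\mu_e^{(\rhohat_e)}\times\lambda$. Stationarity of $\mu_e^{(\rhohat_e)}$ under the SSM on $\Xha_e$ (from \cthm{unique}) together with the triviality of the dynamics in the $S$-coordinate makes $\tilde\mu$ stationary; translation invariance of the two factors makes $\tilde\mu$ translation invariant; and, the coordinates being independent, the $\tilde\mu$-a.s.\ Ces\`aro limit of $\n$ is $\rhohat_e+\kappa=\rhohat$, so $\mu^{(\rhohat,\lambda)}=\gamma_*\tilde\mu$ has density $\rhohat$ (in particular $\mu^{(\rhohat,\lambda)}(\n(0))=\rhohat<\infty$, so it is regular). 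By \clem{gamma}(a) it then remains to check that $\tilde\mu$ is extremal among TIS states on $\Xha_e\times S$. Suppose $\tilde\mu=\alpha\nu_1+(1-\alpha)\nu_2$ with $\nu_1,\nu_2$ TIS and $0<\alpha<1$. Projecting onto $S$, where the dynamics is the identity, writes the ergodic --- hence extremal TI --- measure $\lambda$ as a convex combination of the TI $S$-marginals of $\nu_1$ and $\nu_2$, so both of those marginals equal $\lambda$; disintegrating as in the proof of \clem{gamma}(b) then gives $\nu_i(d\n\,d\sigma)=\nu_{i,\sigma}(d\n)\lambda(d\sigma)$ with each $\nu_{i,\sigma}$ stationary on $\Xha_e$ for $\lambda$-a.e.\ $\sigma$ and $\mu_e^{(\rhohat_e)}=\alpha\nu_{1,\sigma}+(1-\alpha)\nu_{2,\sigma}$. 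Since $\nu_{i,\sigma}\ll\mu_e^{(\rhohat_e)}$, each $\nu_{i,\sigma}$ is supported where the density is $\rhohat_e$, so \cconj{main} forces $\nu_{i,\sigma}=\mu_e^{(\rhohat_e)}$ and hence $\nu_i=\tilde\mu$; thus $\tilde\mu$ is extremal.

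Because all the real content lies in \cthm{tricky} and \clem{gamma}, I expect no deep obstacle here. The one place demanding care is the measure-theoretic bookkeeping in the extremality step --- passing from stationarity and translation invariance of $\nu_i$ to the corresponding properties of the conditional measures $\nu_{i,\sigma}$ for $\lambda$-a.e.\ $\sigma$, with a single $\lambda$-null exceptional set chosen via a countable generating family of test sets --- but this is precisely the disintegration already carried out in the proof of \clem{gamma}, so a pointer there should suffice. A minor point is that $\Mb_s$ consists of regular states, so one must confirm $\mu^{(\rhohat,\lambda)}(\n(0))<\infty$; this is immediate from the explicit single-site marginal of $\mu_e^{(\rhohat_e)}$ computed in \eqref{ssd} together with $\sigma(0)\le1$.
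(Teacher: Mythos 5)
Your proof is correct and follows essentially the same route as the paper: reduce to $\Xha_e\times S$ via \cthm{tricky} and \clem{gamma}, apply \cconj{main} to the conditional measures $\mu_\sigma$ to get the product form, and use ergodicity of $\lambda$ together with the uniqueness from \cconj{main} to get extremality of the product. The paper's extremality step is stated more tersely (decomposing into extremal components, each already known to be a product), but your marginal/disintegration argument is just a more explicit version of the same idea.
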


Before giving the proof we note an immediate consequence of this result
and \cthm{bijection}.

\begin{corollary}\label{FSSEPhigh}Under the hypotheses of
\cthm{structure} the ETIS states of the F-SSEP with density $\rho>1/2$
are the states $\Phi_\phi\bigl(\mu^{(\rhohat,\lambda)}\bigr)$;
$\Phi_\phi\bigl(\mu^{(\rhohat,\lambda)}\bigr)$ has density
$\rhohat/(1+\rhohat)$. \end{corollary}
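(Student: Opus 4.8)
The plan is to read off the corollary directly from \cthm{structure} via the bijection $\Phi_\phi$ of \cthm{bijection}, the only genuine verification being that the F-SSEP states in question are regular, so that $\Phi_\phi$ applies to them.

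First I would record that $g(\rhohat):=\rhohat/(1+\rhohat)$ is a strictly increasing bijection of $(1,\infty)$ onto $(1/2,1)$, with inverse $\rho\mapsto\rho/(1-\rho)$; thus an F-SSEP state has density $\rho\in(1/2,1)$ exactly when the matching SSM density under $\Phi_\phi$ is $\rhohat=\rho/(1-\rho)\in(1,\infty)$. Next I would check regularity: if $\mu$ is an ETIS state of the F-SSEP then by \clem{densities} it has a definite density $\rho$, and if $\rho<1$ then $\mu$ assigns zero probability to the configuration with $\eta(i)=1$ for all $i$ (which has density $1$)---the sole obstruction to regularity---so $\mu\in\Mb_s(X)$. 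By \cthm{bijection} there is then a unique $\muhat\in\Mb_s(\Xh)$ with $\Phi_\phi(\muhat)=\mu$, and $\muhat$ has density $\rhohat>1$; invoking \cconj{main} and \cthm{structure}, $\muhat=\mu^{(\rhohat,\lambda)}$ for an ergodic TI measure $\lambda$ on $S$ of density $\kappa\le\rhohat-1$ with $\rhohat_e=\rhohat-\kappa$. Hence $\mu=\Phi_\phi(\mu^{(\rhohat,\lambda)})$.

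For the reverse inclusion I would note that \cthm{structure} identifies each $\mu^{(\rhohat,\lambda)}$ (with $\rhohat>1$ and $\lambda$ admissible) as an ETIS state of the SSM of density $\rhohat$, so $\mu^{(\rhohat,\lambda)}\in\Mb_s(\Xh)$ and, by \cthm{bijection}, $\Phi_\phi(\mu^{(\rhohat,\lambda)})\in\Mb_s(X)$ has density $g(\rhohat)=\rhohat/(1+\rhohat)\in(1/2,1)$. Combining the two directions gives the asserted description of the high-density ETIS states of the F-SSEP together with their densities. I do not anticipate any real difficulty: the content of the corollary is entirely contained in \cthm{structure} and \cthm{bijection}, and the only step requiring a remark---the regularity check needed to invoke \cthm{bijection}---is an immediate consequence of \clem{densities}.
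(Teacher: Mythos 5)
Your proposal is correct and follows the same route as the paper, which simply declares the corollary an immediate consequence of Theorem~\ref{structure} and Theorem~\ref{bijection}. The only detail you add beyond the paper's one-line justification is the explicit regularity check via Lemma~\ref{densities}, which is a harmless (and accurate) elaboration.
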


\begin{proofof}{\cthm{structure}}By \clem{gamma} it suffices to prove
that the ETIS states on $\Xha_e\times S$ are the states
$\mu^{(\rhohat_e)}_e\times\lambda$ described in the theorem.  The latter
are clearly TIS.  If $\tilde\mu$ is an ETIS state on $\Xha_e\times S$ of
density $\rhohat$ then \clem{gamma} and \cconj{main} imply that
$\tilde\mu=\mu^{(\rhohat-\kappa)}_e\times\lambda$.  Conversely, a
decomposition of a state $\mu^{(\rhohat_e)}_e\times\lambda$ as described
in the theorem into extremal components must, by this and the ergodicity
of $\lambda$, be trivial.\end{proofof}

\begin{remark}\label{ergodicity} (a) The states $\mu^{(\rhohat,\lambda)}$
of the SSM with $\rhohat>1$ are defined in \cthm{structure}; it is
natural then to define also
$\mu^{(1,\delta_e)}:=\gamma_*(\mu^{(\rhohat_e)}_e\times\delta_e)=\mu^{(2)}$,
with $\delta_e\in\Mb(S)$ the point mass on the zero configuration $e$.

 \smallskip\noindent
 (b) It follows from the theorem that, whenever $\mu^{(\rhohat,\lambda)}$
is defined, the density $\kappa$ of the measure $\lambda$ satisfies
$0\le\kappa\le\max\{1-\rhohat,1\}$.  The case $\rhohat=1+\kappa$,
corresponding, in the notation of the theorem, to $\rhohat_e=1$, is of
particular interest; see (c) and (d) below.

 \smallskip\noindent
 (c) $\mu^{(\rhohat,\lambda)}$ is extremal in the class of TIS states,
but it may not be ergodic under translations.  For example, if
$\sigma\in S$ has even period $p$ and density $\kappa$, and $\lambda$ is
the superposition of the point masses on $\sigma$ and its translates,
then if $\rhohat=1+\kappa$ (i.e., if $\rhohat_e=1$),
$\mu^{(\rhohat,\lambda)}$ is a superposition of two ergodic measures,
these being the superpositions of the translates of $\n^*+\sigma$ and
$\tau\n^*+\sigma$, respectively.  When in general can such non-ergodicity
occur?  First, if $\rhohat_e>1$ then $\mu^{(\rhohat_e)}_e\times\lambda$,
as the product of a mixing and an ergodic measure, is ergodic, and hence
so is $\mu^{(\rhohat_e+\kappa,\lambda)}$.  On the other hand, if
$\rhohat_e=1$ we use the fact that $\mu^{(\rhohat_e)}_e\times\lambda$
will be ergodic iff the eigenvalue 1 of the translation operator in
$L^2(\Xha_e\times S,\mu^{(\rhohat_e)}_e\times\lambda)$ is simple; since
translation on $L^2(\Xha_e,\mu^{(1)}_e)$ has simple eigenvalues of $\pm1$
and no others, we conclude that $\mu^{(\rhohat,\lambda)}$ is ergodic iff
the translation operator on $L^2(S,\lambda)$ does not have the eigenvalue
$-1$.

 \smallskip\noindent
 (d) If $\lambda$ has density $\kappa$ then,
$\mu^{(1+\kappa,\lambda)}$-a.s., each configuration $\n$ has
sites with 0 or 1 particle(s) alternating with sites with 2 or 3
particles, and the dynamics carries such a configuration to the one
obtained from it by the substitutions $0\to2$, $1\to3$, $2\to0$, and
$3\to1$. The evolution of these configurations is thus periodic, of
period 2.
\end{remark}

We finally observe that the states $\mu^{(\rhohat,\lambda)}$ for fixed
$\rhohat_e=\rhohat-\kappa$ (where as usual $\kappa$ is the density of
$\lambda)$ form a $\lambda$-family (see \cdef{lamfam}), which we denote
$\Fch_{\rhohat_e}$.  The indexing map
$\Psih_{\rhohat_e}:\L\to\Fch_{\rho_e}$ is given by
$\Psih_{\rhohat_e}(\lambda)=\mu^{(\rhohat_e+\kappa,\lambda)}$, where we
have made the identification $\L=\Mb(S)$.  From \ccor{FSSEPhigh}, then,
the ETIS states of the F-SSEP with density $\rho>1/2$, together with the
state $\mu^{(2)}$, form the $\lambda$-families
$\Fc_{\rhohat_e}=\Phi_*(\Fch_{\rhohat_e})$.
 
\section{Summary\label{conclude}}

The results of this paper are summarized, under the assumption that
\cconj{main} holds, by \cfig{symbolic}, which gives a symbolic depiction
of the relations among the $\lambda$-families of regular ETIS states of
the SSM. The heavy black lines denote $\lambda$-families.
$\Fch_{\rm left}$ and $\Fch_{\rm right}$ have been separated for
visibility, but in fact both lie at $\rhohat=1$.  The state
$\muhat^{(1)}$ belongs to all three of the families $\Fch_{\rm low}$,
$\Fch_{\rm left}$, and $\Fch_{\rm right}$, and the state $\muhat^{(2)}$
to $\Fch_{\rm left}$, $\Fch_{\rm right}$, and $\Fch^{(1)}$ (see
\crem{ergodicity}(a)).  $\kappa$ denotes the density of a measure
$\lambda\in\Mb(S)$; this variable is relevant only for the state
$\muhat^{(2)}$ (for which $\kappa=0$) and for states with $\rhohat>1$.
The shaded region is filled with the $\lambda$-families
$\Fch_{\rhohat_e}$; the family for $\rhohat_e=1$ and three other
representative families ($\rhohat_e=r_1,r_2,r_3$) are shown.

\begin{figure}[ht]
  \centering
\includegraphics[scale=0.92]{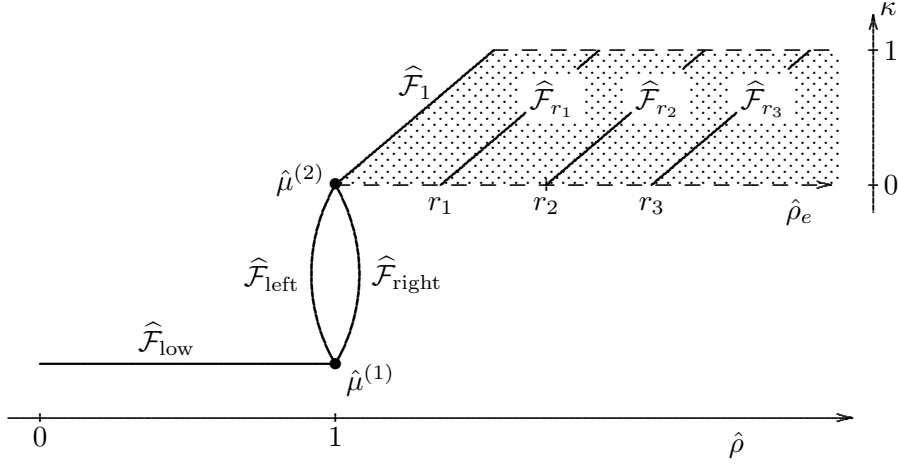}
 \caption{Symbolic picture of the set of ETIS states of the SSM.}
 \label{symbolic} \end{figure}

 \medskip\noindent
  {\bf Acknowledgments:} The work of JLL was supported by the AFOSR under
award number FA9500-16-1-0037 and Chief Scientist Laboratory Research
Initiative Request \#99DE01COR.  

\appendix

\section{Equivalence of TI measures under substitutions\label{subs}}

In this appendix we give a construction which will be used at several
points in the paper.  Suppose that $\S$ and $\T$ are countable alphabets,
that $\Yh=\S^\bbz$ and $Y=\T^\bbz$ with typical elements $\zeta\in\Yh$
and $\eta\in Y$, and that for each $s\in \S$ we specify a finite
sequence $\chi(s)=t_s(1)\ldots t_s(k(s))$ of elements of $\T$ (that is,
a word of $k(s)$ letters in the alphabet $\T$).  Then we define
$\phi:\Yh\to Y$ to be the map which substitutes $\chi(s)$ for $s$, that
is,
 \be
\phi(\ldots \zeta(-1)\zeta(0)\zeta(1)\zeta(2)\ldots)
    =\ldots\chi(\zeta(-1))\chi(\zeta(0))\chi(\zeta(1))\chi(\zeta(2))\ldots,
   \label{fund}
 \ee
 with $\chi(\zeta(1))$ beginning at site 1, so that
$\phi(\zeta)(1)=t_{\zeta(1)}(1)$.  For $s\in \S$ we define
$\Yh_s=\{\zeta\in \Yh\mid \zeta(1)=s\}$ and $Y_s=\phi(\Yh_s)$; for
$0\le j\le k(s)-1$ we set $Y_{sj}=\tau^{-j}Y_s$ and define
$\phi_{sj}:\Yh_s\to Y_{sj}$ by $\phi_{sj}=\tau^{-j}\phi\big|_{\Yh_s}$.
From now on we assume that the $\chi(s)$ are such that the sets $Y_{sj}$
are pairwise disjoint.

  We call a TI measure $\nuhat$ on $\Yh$ {\it regular} if
$\nuhat(k(\zeta(1)))$ is finite, i.e., if
$ Z_{\nuhat}:=\sum_{s\in \S}k(s)\nuhat(\Yh_s)<\infty$, and a TI measure
$\nu$ on $Y$ {\it regular} if it is supported on
$\bigcup_{s\in \S}\bigcup_{j=0}^{k(s)-1}Y_{sj}$, the smallest translation
invariant subset of $\T^\bbz$ containing $\phi(\Yh)$.  Let $\M(\Yh)$
denote the space of regular TI states on $\Yh$ and $\M(Y)$ the space of
regular TI states on $Y$.  If $\nuhat\in\M(\Yh)$ and $\nu\in\M(Y)$ define
$\nuhat_s=\nuhat\big|_{\Yh_s}$ and $\nu_s=\nu\big|_{Y_s}$.

\begin{theorem}\label{Phithm} {\sl For $\nuhat\in\M(\Yh)$
 define the measure $\Phi(\nuhat)$ on $Y$ by
 \be
\Phi(\nuhat):=Z_{\nuhat}^{-1}
   \sum_{s\in \S}\sum_{j=0}^{k(s)-1}\phi_{sj*}\nuhat_s.\label{Phi}
 \ee
 Then $\Phi$ is a bijection of $\M(\Yh)$ with $\M(Y)$, and for
$\nu\in\M(Y)$,
$\Phi^{-1}(\nu)=\widetilde
Z_\nu^{-1}\sum_{s\in \S}\phi_{s0\,*}^{-1}\nu_s$, where $\widetilde
Z_\nu=\sum_{s\in \S}\nu(Y_s)$.  $\Phi(\nuhat)$ is ergodic iff $\nuhat$
is.} \end{theorem}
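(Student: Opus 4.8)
The plan is to recognize $\Phi$ as the canonical bijection between the invariant probability measures of a transformation and those of a Kakutani skyscraper built over it. Write $\sigma:=\tau^{-1}$ for the left shift on $\Yh$, so $(\sigma\zeta)(i)=\zeta(i+1)$, and set $Y^*:=\bigcup_{s\in\S}\bigcup_{j=0}^{k(s)-1}Y_{sj}$; by definition every regular $\nu\in\M(Y)$ is supported on $Y^*$, and by hypothesis the $Y_{sj}$ partition $Y^*$. First I would use the disjointness hypothesis, together with the definition \eqref{fund}, to check that $\phi$ is injective and that each $\eta\in Y^*$ lies in exactly one $Y_{sj}$ (a routine, if slightly tedious, ``parsing'' argument); by the Lusin--Souslin theorem each $\phi_{sj}:\Yh_s\to Y_{sj}$ is then a Borel isomorphism. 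Hence there are well-defined Borel maps $\pi:Y^*\to\Yh$ and $\rho:Y^*\to\bbz_+$ with $\eta\in Y_{s,\rho(\eta)}$ and $\pi(\eta)=\phi_{s,\rho(\eta)}^{-1}(\eta)$; equivalently $(\pi,\rho)$ inverts the bijection $W:=\{(\zeta,j)\mid 0\le j\le k(\zeta(1))-1\}\to Y^*$ given by $(\zeta,j)\mapsto\tau^{-j}\phi(\zeta)$.

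The crucial ingredient is the commutation relation $\phi(\sigma\zeta)=\tau^{-k(\zeta(1))}\phi(\zeta)$, immediate from \eqref{fund} (advancing $\zeta$ one step removes the leading block, of length $k(\zeta(1))$, from $\phi(\zeta)$). Together with $\tau^{-1}Y_{sj}=Y_{s,j+1}$ for $j\le k(s)-2$, it shows that, under the identification $Y^*\cong W$, the shift $\tau^{-1}$ becomes exactly the skyscraper map $T$ over $(\Yh,\sigma)$ with roof function $r(\zeta):=k(\zeta(1))$: that is, $T(\zeta,j)=(\zeta,j+1)$ for $j<r(\zeta)-1$ and $T(\zeta,r(\zeta)-1)=(\sigma\zeta,0)$. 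Under this identification a regular TI measure on $Y$ is precisely a $T$-invariant probability on $W$, and a regular TI measure $\nuhat$ on $\Yh$ is precisely a $\sigma$-invariant probability with $\int r\,d\nuhat=Z_{\nuhat}<\infty$. The suspension construction attaches to such a $\nuhat$ the $T$-invariant probability equal to $Z_{\nuhat}^{-1}$ times the product of $\nuhat$ with counting measure, restricted to $W$; transporting this back to $Y^*$ via $(\zeta,j)\mapsto\tau^{-j}\phi(\zeta)$ yields, level by level, exactly $Z_{\nuhat}^{-1}\sum_{s}\sum_j\phi_{sj*}\nuhat_s=\Phi(\nuhat)$, which is therefore a regular TI probability measure on $Y$.

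For the bijection I would invert the suspension construction. Given a regular $\nu\in\M(Y)$, regarded as a $T$-invariant probability on $W$: since $T$ is invertible and $W=\bigcup_{n\ge0}T^n B$ with base $B\cong\bigcup_s Y_s$, invariance forces $\nu(B)>0$; the first-return map of $T$ to $B$ is $(\zeta,0)\mapsto(\sigma\zeta,0)$, so the normalized image in $\Yh$ of $\nu\big|_B$ is a $\sigma$-invariant probability $\nuhat$, regular by Kac's formula ($Z_{\nuhat}=1/\nu(B)$). Unwinding the definitions gives $\nuhat=\widetilde Z_\nu^{-1}\sum_s\phi_{s0*}^{-1}\nu_s$ with $\widetilde Z_\nu=\sum_s\nu(Y_s)$, and $\Phi(\nuhat)=\nu$ because a finite $T$-invariant measure is determined by, and is the suspension of, its restriction to the base; conversely, restricting the suspension of $\nuhat$ back to $B$ recovers $\nuhat$. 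This yields the stated bijection and inverse. Finally, $\tau$-ergodicity is the same as $\sigma$-ergodicity, and a measure-preserving transformation is ergodic iff its Kakutani skyscraper is (equivalently, iff the induced transformation on the base is ergodic), so $\Phi(\nuhat)$ is ergodic iff $\nuhat$ is.

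The one step that is more than bookkeeping is this last ergodicity equivalence; I would either cite the standard theory of induced transformations/suspensions, or give the short direct argument that a $T$-invariant $L^\infty$ function on $W$ is necessarily constant along each column and hence comes from a $\sigma$-invariant function on $\Yh$, so that the $T$-invariant and $\sigma$-invariant functions, and therefore the ergodic decompositions, correspond. A secondary point, entirely routine but needing a little care, is the deduction of the injectivity/unique-parsing facts from the disjointness hypothesis, which is what legitimizes $\pi$, $\rho$, and the expression $\phi_{s0*}^{-1}\nu_s$ occurring in $\Phi^{-1}$.
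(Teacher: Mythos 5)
Your proof is correct, and it supplies in a structured way what the paper dismisses as ``straightforward to check'': the paper's entire argument for the bijection is that one-line assertion, and its ergodicity claim is derived instead from the observation that $\nuhat=\sum c_\alpha\nuhat_\alpha$ iff $\Phi(\nuhat)=\sum c_\alpha'\Phi(\nuhat_\alpha)$ with $c_\alpha'=Z_{\nuhat_\alpha}c_\alpha/Z_{\nuhat}$, i.e.\ from the induced bijection of convex decompositions. Your identification of $\Phi$ with the normalized Kakutani-skyscraper (suspension) measure over $(\Yh,\tau^{-1})$ with roof function $r(\zeta)=k(\zeta(1))$ is exactly right: the commutation relation $\phi(\sigma\zeta)=\tau^{-k(\zeta(1))}\phi(\zeta)$ does hold, the base map is the first-return map, $Z_{\nuhat}=\int r\,d\nuhat$ is the normalization, and Kac's formula gives $Z_{\nuhat}\widetilde Z_\nu=1$ and the regularity of $\Phi^{-1}(\nu)$, which the bare formula for $\Phi^{-1}$ leaves implicit. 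What this buys is that the bijection, its inverse, and the finiteness/positivity issues ($\nu(B)>0$, $Z_{\nuhat}<\infty$) all come packaged from standard induced-transformation theory rather than from ad hoc verification; the cost is the preliminary parsing argument you correctly flag (the disjointness of the $Y_{sj}$ does yield unique backward as well as forward parsing, since for $1\le m<k(\zeta(0))$ one has $\tau^m\phi(\zeta)\in Y_{\zeta(0),\,k(\zeta(0))-m}$, which by disjointness is not a base level, so $k(\zeta(0))$ is the first return time of $\tau$ to the base and $\zeta(0)$ is then determined). For the ergodicity equivalence your invariant-functions-on-columns argument and the paper's decomposition argument are both standard and both work; the paper's version has the minor advantage of not needing to worry about modifying an a.e.-invariant function on a null set.
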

 
\begin{proof}This is straightforward to check.  The final statement
follows from the fact that $\nuhat=\sum c_\alpha\nuhat_\alpha$ if and
only if $\Phi(\nuhat)=\sum c_\alpha'\Phi(\nuhat_\alpha)$, where
$\nuhat_\alpha\in\M(\Yh)$ and
$c_\alpha'=Z_{\nuhat_\alpha}c_\alpha/Z_{\nuhat}$.\end{proof}

We now suppose that we are given dynamical rules in the spaces $\Yh$ and
$Y$, that is, translation invariant Markov processes with state spaces
$\Yh$ and $Y$, specified by respective TI transition kernels
$\Qh(\zeta,A)$ and $Q(\eta,B)$.  We will say that $\Qh$ or $Q$ {\it
preserves ergodicity} if $\nuhat\Qh$ (respectively $\nu Q$) is ergodic
whenever $\nuhat$ (respectively $\nu$) is; {\it preserving regularity} is
defined similarly.

We next want to give a condition which will imply that the mapping $\Phi$
preserves these dynamics, i.e., that $\Phi(\nuhat\Qh)=\Phi(\nuhat)Q$; we
will need some further notation.  Let $\Yo\subset \Yh\times Y$ be the set
of pairs $(\zeta,\eta)$ such that $\eta$ is a (possibly trivial)
translate of $\phi(\zeta)$, and let $\pihat$ and $\pi$ be the projections of
$\Yo$ onto the first and second components, respectively, of
$\Yh\times Y$.

\begin{definition}\label{taucouple} A {\it $\tau$-coupling} of $\Qh$ and
$Q$ is a  Markov transition kernel $\Qo$ with state
space $\Yo$ such that for $(\zeta,\eta)\in\Yo$, $\Ahat\subset\Yh$, and
$A\subset Y$,
 \begin{align}\label{toAhat}
\Qo\bigl((\zeta,\eta),\pihat^{-1}(\Ahat)\bigr)=\Qh(\zeta,\Ahat),\\
\Qo\bigl((\zeta,\eta),\pi^{-1}(A)\bigr)=Q(\eta,A).\label{toA}
 \end{align}
  Equivalently, a Markov transition kernel $\Qo$ with state space $\Yo$
is a $\tau$-coupling of $\Qh$ and $Q$ provided that for any Markov
process $(\zeta_t,\eta_t)$ with transition kernel $\Qo$, $\zeta_t$
and $\eta_t$ are Markov processes with transition kernels $\Qh$ and $Q$,
respectively.  \end{definition}

\begin{remark}\label{shel}To show that there is a $\tau$-coupling of $Q$
and $\Qh$ it suffices to find a restricted transition probability
$\Qo\bigl((\zeta,\phi(\zeta)), \cdot\bigr)$ which satisfies
\eqref{toAhat} and \eqref{toA} when $\eta=\phi(\zeta)$.  For $\Qo$ can
then be extended to the rest of $\Yo$ by setting
$\Qo\bigl((\zeta,\eta),\cdot\bigl)
:=\tauo^q_*\Qo\bigl((\zeta,\phi(\zeta)),\cdot\bigl)$
when $\eta=\tau^q\phi(\zeta)$, choosing $q$, when periodicity of
$\phi(\zeta)$ necessitates a choice, to be the minimal nonnegative $q$
with $\eta =\tau^q\phi(\zeta)$ .  Here $\tauo$ acts on $\Yo$ via
$\tauo(\zeta,\eta):=(\zeta,\tau\eta)$.\end{remark}

\begin{theorem}\label{equivariant} Suppose that $Q$ and $\Qh$ are TI
Markov transition kernels on $\Yh$ and $Y$, respectively, which preserve
ergodicity and regularity and for which there exists a $\tau$-coupling
$\Qo$.  Then for any TI state $\nuhat\in\M(\Yh)$ and any $n\ge1$,
$\Phi(\nuhat\Qh^n)=\Phi(\nuhat)Q^n$.  \end{theorem}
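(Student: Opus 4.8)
\begin{proofof}{\cthm{equivariant}}
The plan is to exhibit $\Phi(\nuhat)Q$ and $\Phi(\nuhat\Qh)$ as the $\pi$-image of one and the same measure on the coupling space $\Yo$, obtained by advancing a canonical measure one step with $\Qo$. Since $\Qh$ is translation invariant and preserves regularity, each $\nuhat\Qh^m$ again lies in $\M(\Yh)$, so $\Phi(\nuhat\Qh^m)$ is defined; granting the case $n=1$, that $\Phi(\widehat\lambda\Qh)=\Phi(\widehat\lambda)Q$ for every $\widehat\lambda\in\M(\Yh)$, the general statement follows by induction. So fix $\nuhat\in\M(\Yh)$ and put, with $\iota_{sj}(\zeta):=(\zeta,\phi_{sj}(\zeta))$ for $\zeta\in\Yh_s$,
\be\label{eq:sky}
\nuo\ :=\ Z_{\nuhat}^{-1}\sum_{s\in\S}\sum_{j=0}^{k(s)-1}\iota_{sj*}\nuhat_s ,
\ee
the Kakutani skyscraper over $\nuhat$ with ceiling $\zeta\mapsto k(\zeta(1))$, realized inside $\Yo$. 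Comparison with \eqref{Phi} gives $\pi_*\nuo=\Phi(\nuhat)$ (while $\pihat_*\nuo$ is the size-biased measure $Z_{\nuhat}^{-1}k(\zeta(1))\,d\nuhat$), so by the coupling identity \eqref{toA},
\be\label{eq:pushpi}
\pi_*(\nuo\Qo)(A)=\int_{\Yo}\Qo\bigl((\zeta,\eta),\pi^{-1}(A)\bigr)\,d\nuo=\int_{\Yo}Q(\eta,A)\,d\nuo=\int_{Y}Q(\eta,A)\,d\Phi(\nuhat)=\bigl(\Phi(\nuhat)Q\bigr)(A),
\ee
i.e.\ $\pi_*(\nuo\Qo)=\Phi(\nuhat)Q$ (and likewise $\pihat_*(\nuo\Qo)=(\pihat_*\nuo)\Qh$). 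Everything is thus reduced to the identity $\pi_*(\nuo\Qo)=\Phi(\nuhat\Qh)$.

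To handle this I would use the rewriting, immediate from \eqref{Phi}, that for $\widehat\lambda\in\M(\Yh)$ and $A\subset Y$ measurable
\be\label{eq:Ncount}
\Phi(\widehat\lambda)(A)=Z_{\widehat\lambda}^{-1}\,\widehat\lambda(N_A) ,\qquad N_A(\zeta):=\#\bigl\{\,0\le j<k(\zeta(1)):\tau^{-j}\phi(\zeta)\in A\,\bigr\}.
\ee
Expanding $\Phi(\nuhat)Q(A)=Z_{\nuhat}^{-1}\int_{\Yh}\sum_{j=0}^{k(\zeta(1))-1}Q(\tau^{-j}\phi(\zeta),A)\,d\nuhat(\zeta)$ and, via \eqref{eq:Ncount}, $\Phi(\nuhat\Qh)(A)=Z_{\nuhat\Qh}^{-1}\int_{\Yh}\!\int_{\Yh}N_A(\zeta')\,\Qh(\zeta,d\zeta')\,d\nuhat(\zeta)$, the target $\Phi(\nuhat)Q=\Phi(\nuhat\Qh)$ becomes, for every cylinder $A$,
\be\label{eq:crux}
Z_{\nuhat}^{-1}\int_{\Yh}\sum_{j=0}^{k(\zeta(1))-1}Q\bigl(\tau^{-j}\phi(\zeta),A\bigr)\,d\nuhat(\zeta)\ =\ Z_{\nuhat\Qh}^{-1}\int_{\Yh}\!\int_{\Yh}N_A(\zeta')\,\Qh(\zeta,d\zeta')\,d\nuhat(\zeta) .
\ee
Its special case $A=Y$ is the statement $Z_{\nuhat}=Z_{\nuhat\Qh}$ (the dynamics preserves the mean block length, equivalently the density of block boundaries), after which \eqref{eq:crux} is the equality of the two $\nuhat$-integrals. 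To obtain it I would run the coupling one step from a diagonal point $(\zeta,\phi(\zeta))$: the successor $(\zeta',\eta')\in\Yo$ has $\zeta'\sim\Qh(\zeta,\cdot)$, $\eta'\sim Q(\phi(\zeta),\cdot)$, and $\eta'$ a translate of $\phi(\zeta')$. By the disjointness of the $Y_{sj}$, $\phi(\zeta')$ carries a canonical block decomposition, so the left side of \eqref{eq:crux} sums $\ind_A$ over the $k(\zeta(1))$ shifts $\tau^{-j}\eta'$, $0\le j<k(\zeta(1))$, i.e.\ over $\eta'$ slid through one $\phi(\zeta)$-block, while $N_A(\zeta')$ sums $\ind_A$ over the $k(\zeta'(1))$ shifts of $\phi(\zeta')$ that slide through one $\phi(\zeta')$-block. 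Because $\Qo$ commutes with the shift $\tauo$ of the $Y$-coordinate (\crem{shel}), it moves block boundaries bijectively, creating and destroying none; the content of \eqref{eq:crux} is that, after $\nuhat$-averaging, these two ``one-block windows of shifts'' meet $A$ equally often.

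The main obstacle is precisely this last bookkeeping. Since in general $\zeta'$ is not the block-decoding $D(\eta')$ of $\eta'$ (where $D:\bigcup_{s,j}Y_{sj}\to\Yh$ is the decoding map, single-valued by the disjointness hypothesis) but only a translate of it, and since the two windows in \eqref{eq:crux} have the different lengths $k(\zeta(1))$ and $k(\zeta'(1))$ placed at a coupling-dependent offset, one cannot match the integrands pointwise; their difference must be shown to integrate to zero against the translation-invariant $\nuhat$, i.e.\ to be a coboundary. The cleanest route I see is to use the extension formula $\Qo((\zeta,\eta),\cdot)=\tauo^q_*\Qo((\zeta,\phi(\zeta)),\cdot)$ of \crem{shel} to reduce all of $\nuo\Qo$ to the behaviour on the diagonal $\{(\zeta,\phi(\zeta))\}$, where one step of $Q$ displaces block boundaries by amounts governed by purely local data, and there to check directly that $\nuo\Qo$ differs from the skyscraper over $\nuhat\Qh$ only by a measure carried on the $\tauo$-orbits, which leaves the $\pi$-image unchanged. (Alternatively one may first pass to ergodic $\nuhat$: $\Qh$ preserves ergodicity, $\Phi$ preserves it by \cthm{Phithm}, and by the proof of that theorem $\Phi$ respects convex decompositions after the reweighting $c_\alpha\mapsto Z_{\nuhat_\alpha}c_\alpha/Z_{\nuhat}$, which is compatible with the dynamics exactly because $Z_{\nuhat}=Z_{\nuhat\Qh}$.)
\end{proofof}
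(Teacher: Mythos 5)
Your setup is sound as far as it goes: the skyscraper measure $\nuo$ in your \eqref{eq:sky} does satisfy $\pi_*\nuo=\Phi(\nuhat)$, and the computation $\pi_*(\nuo\Qo)=\Phi(\nuhat)Q$ is correct. But the proof then stalls exactly where you say it does: the identity $\pi_*(\nuo\Qo)=\Phi(\nuhat\Qh)$ is the whole content of the theorem, and you do not establish it. After one step of $\Qo$ the pair $(\zeta',\eta')$ satisfies $\eta'=\tau^{q}\phi(\zeta')$ with a random, realization-dependent offset $q$, and for $\pi_*(\nuo\Qo)$ to equal the skyscraper projection $\Phi(\nuhat\Qh)$ one would need the conditional law of that offset given $\zeta'$ to be uniform on a block in the appropriate size-biased sense; nothing in the definition of a $\tau$-coupling gives this. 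Your proposed repairs are not viable as stated: a discrepancy ``carried on the $\tauo$-orbits'' does \emph{not} leave the $\pi$-image unchanged, since $\pi\circ\tauo=\tau\circ\pi$ translates the image, and the coboundary/telescoping idea is never carried out. Likewise $Z_{\nuhat}=Z_{\nuhat\Qh}$ is flagged as a special case of the identity you are trying to prove, so it cannot be invoked along the way.

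The paper avoids this bookkeeping entirely with a rigidity argument that your proposal is missing. Reduce to $\nuhat$ ergodic; since $\Qh$, $Q$, and $\Phi$ all preserve ergodicity, both $\Phi(\nuhat\Qh^n)$ and $\Phi(\nuhat)Q^n$ are ergodic, hence either \emph{equal or mutually singular}. So one need not compute either measure exactly: it suffices to exhibit a single nonzero measure dominated by both. This is done by lifting $\nuhat$ to the diagonal $\nuo:=\psi_*\nuhat$ with $\psi(\zeta)=(\zeta,\phi(\zeta))$ (not the skyscraper), running $\Qo^n$, and restricting to a set $C_q=\{\eta=\tau^q\phi(\zeta)\}$ of positive measure; the two projections of this restricted measure are translates of one another and are each bounded above by a constant multiple of $\Phi(\nuhat\Qh^n)$ and $\Phi(\nuhat)Q^n$ respectively, and translation invariance of the latter closes the argument. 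In short, the dichotomy ``ergodic measures are equal or mutually singular'' is the key idea that replaces the pointwise matching of block windows you were attempting; without it, the gap in your argument is real.
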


\begin{corollary}\label{stationary} If $Q$ and $\Qh$ are as in
\cthm{equivariant} then $\Phi$ is a bijection of $\M_s(\Yh)$ with
$\M_s(Y)$, i.e., $\nuhat\in\M(\Yh)$ is stationary for $\Qh$ if and only
if $\Phi(\nuhat)$ is stationary for $Q$.  \end{corollary}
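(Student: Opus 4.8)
The plan is to deduce the corollary directly from \cthm{Phithm} and \cthm{equivariant}; essentially no new estimates are needed. I would begin by recalling that \cthm{Phithm} already provides a bijection $\Phi:\M(\Yh)\to\M(Y)$ with an explicit inverse. Hence it suffices to show that, for $\nuhat\in\M(\Yh)$, the measure $\nuhat$ is stationary for $\Qh$ if and only if $\Phi(\nuhat)$ is stationary for $Q$; once this equivalence is in hand, the bijection $\Phi$ automatically restricts to a bijection between the two subsets $\M_s(\Yh)\subset\M(\Yh)$ and $\M_s(Y)\subset\M(Y)$, which is the assertion of the corollary.

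For the equivalence itself I would apply \cthm{equivariant} with $n=1$, which gives $\Phi(\nuhat\Qh)=\Phi(\nuhat)Q$ for every $\nuhat\in\M(\Yh)$. If $\nuhat\Qh=\nuhat$ then $\Phi(\nuhat)Q=\Phi(\nuhat\Qh)=\Phi(\nuhat)$, so $\Phi(\nuhat)$ is stationary. Conversely, if $\Phi(\nuhat)$ is stationary then $\Phi(\nuhat\Qh)=\Phi(\nuhat)Q=\Phi(\nuhat)$; since $\Qh$ preserves regularity and is translation invariant, $\nuhat\Qh$ lies again in $\M(\Yh)$, so the injectivity of $\Phi$ forces $\nuhat\Qh=\nuhat$, i.e.~$\nuhat$ is stationary. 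To finish I would note that surjectivity of the restricted map is immediate: given $\nu\in\M_s(Y)$, write $\nu=\Phi(\nuhat)$ with $\nuhat\in\M(\Yh)$, and apply the equivalence just proved to conclude $\nuhat\in\M_s(\Yh)$; the formula for $\Phi^{-1}$ in \cthm{Phithm} then exhibits the inverse of the restricted bijection explicitly.

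The one point that requires any care is the step in the converse direction where I invoke injectivity of $\Phi$: for that I need $\nuhat\Qh$ to lie in the domain $\M(\Yh)$ of $\Phi$, which is exactly guaranteed by the standing hypothesis (inherited from \cthm{equivariant}) that $\Qh$ preserves regularity, together with the translation invariance of $\Qh$. Beyond this there is no real obstacle; the corollary is a formal consequence of the intertwining relation $\Phi(\nuhat\Qh)=\Phi(\nuhat)Q$ provided by \cthm{equivariant}, combined with the bijectivity of $\Phi$.
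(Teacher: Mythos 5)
Your argument is correct and matches the paper's intent exactly: the paper simply declares the corollary "of course immediate" from \cthm{equivariant}, and the details you supply (the $n=1$ intertwining relation $\Phi(\nuhat\Qh)=\Phi(\nuhat)Q$ combined with the injectivity of $\Phi$ from \cthm{Phithm}, plus the observation that $\Qh$ preserves regularity and translation invariance so that $\nuhat\Qh\in\M(\Yh)$) are precisely what that "immediate" deduction requires.
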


The corollary is of course immediate.  The idea of the proof of
\cthm{equivariant} is taken from \cite{AGLS} and \cite{GLS2}:

\begin{proofof}{\cthm{equivariant}}It suffices to verify the result for
$\nuhat$ ergodic.  Then since $\Qh$ and $Q$ preserve ergodicity, as does
$\Phi$, $\Phi(\nuhat\Qh^n)$ and $\Phi(\nuhat)Q^n$ are ergodic, so that
these two measures are either equal or mutually singular.  Hence to prove
their equality it suffices to find a nonzero measure $\lambda$ with
$\lambda\le\Phi(\nuhat\Qh^n)$ and $\lambda\le\Phi(\nuhat)Q^n$.

Let $\Qo$ be a $\tau$-coupling of $\Qh$ and $Q$ with state space $\Yo$,
as in \cdef{taucouple}, define $\psi:\Yh\to\Yo$ by
$\psi(\zeta):=(\zeta,\phi(\zeta))$, and for $\nuhat\in\M(\Yh)$ let
$\nuo:=\psi_*\nuhat$, so that $\pihat_*\nuo=\nuhat$ and
$\pi\nuo=\phi_*\nuhat$.  Fix $n\ge1$ and let $q\in\bbz$ be such that
$(\nuo\Qo^n)(C_q)>0$, where
$C_q:=\{(\zeta,\eta)\in\Yo\mid\eta=\tau^q\phi(\zeta)\}$, and define
$\nuo^{(n,q)}:=\ind_{C_q}\nuo\Qo^n$, $\nu':=\phi_*\pihat_*\nuo^{(n,q)}$,
and $\nu'':=\pi_*\nuo^{(n,q)}$.  We claim that (i)~$\nu''=\tau^q_*\nu'$
and that, for an appropriate constant $c>0$,
(ii)~$c\nu'\le\Phi(\nuhat\Qh^n)$ and (iii)~$c\nu''\le\Phi(\nuhat)Q^n$.
Since $\Phi(\nuhat)Q^n$ is TI, the proof is completed by taking
$\lambda=c\nu'$.

It remains to prove the claim.  (i) follows from the definition of $C_q$.
 From \cdef{taucouple}, $\pihat_*(\nuo\Qo^n)=\nuhat\Qh^n$ and
$\pi_*(\nuo\Qo^n)=(\phi_*\nuhat)Q^n$, and with this and \eqref{Phi}
we have
 \be\label{nup}
\nu'=\phi_*\pihat_*\nuo^{(n,q)}\le\phi_*\pihat_*(\nuo\Qo^n)
  =\phi_*(\nuhat\Qh^n)\le Z_{\nuhat\Qh^n}\Phi(\nuhat\Qh^n)
 \ee
and
 \be\label{nudp}
 \nu''=\pi_*\nuo^{(n,q)}\le\pi_*(\nuo\Qo^n)
  =(\phi_*\nuhat) Q^n\le Z_{\nuhat}\Phi(\nuhat)Q^n.
 \ee
 This verifies parts (ii) and (iii) of the claim, with
$c:=\min\{Z_{\nuhat\Qh^n}^{-1},Z_{\nuhat}^{-1}\}$.\end{proofof}

In the remainder of this appendix we discuss the applications we make of
these results.  \cthm{Phithm} is used in \csect{rhohalf}; the
substitution maps there are denoted $\phileft$ and $\phiright$.
\cthms{Phithm}{equivariant} are used to obtain a correspondence between
the stationary states of the Symmetric Stack Model (SSM) and of the
Facilitated Simple Symmetric Exclusion Process (F-SSEP), a correspondence
first introduced in \csect{intro} (see that section and \csect{prelims}
for the definition of the transition kernels $\Qh$ and $Q$ for these
models) and used throughout the paper.  In this application $\S=\bbz_+$,
$\T=\{0,1\}$, and for $n\in\bbz_+$, $\chi(n):=0\,1^n$; we write
$\Xh:=\bbz_+^\bbz$ and $X=\{0,1\}^\bbz$ but keep the notation
$\phi:\Xh\to X$ for the substitution map obtained from $\chi$.  The
general definition of regularity of measures given above corresponds in
this case to the definition given in \csect{prelims}, and it is clear
that $\Qh$ and $Q$ preserve regularity.

\begin{lemma}\label{QQhatergodic}$\Qh$ and $Q$ preserve ergodicity.
\end{lemma}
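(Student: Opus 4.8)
The plan is to realise each of the dynamics $Q$ and $\Qh$ as a deterministic, translation-equivariant update driven by an independent field of fair coins, and then to combine two standard facts from ergodic theory: the product of a shift-ergodic measure with a shift-mixing measure is shift-ergodic, and a shift-equivariant measurable image of a shift-ergodic measure is shift-ergodic.

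First I would make the noise representation explicit. For the F-SSEP, attach to each site $i\in\bbz$ an independent fair coin $\omega(i)\in\{0,1\}$, to be consulted only when a conflict occurs at $i$, i.e.\ exactly when $\eta\rng{i-2}{i+2}=11011$, in which case $\omega(i)$ selects which of the particles at $i-1$ and $i+1$ lands on $i$. Since a conflict at $i$ forces $\eta(i-1)=\eta(i+1)=1$ whereas a conflict at $i-1$ or $i+1$ forces the corresponding value to be $0$, no two conflicts occur at adjacent sites, so this prescription is unambiguous and defines a measurable map $\Psi\colon X\times\{0,1\}^\bbz\to X$ for which $\Psi(\eta,\omega)(i)$ depends only on the restrictions of $\eta$ and $\omega$ to a bounded neighbourhood of $i$, and which commutes with $\tau$. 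Writing $\beta$ for the product of fair coin measures on $\{0,1\}^\bbz$, one has by construction that $Q(\eta,\cdot)$ is the law of $\Psi(\eta,\omega)$ when $\omega$ is $\beta$-distributed, and hence $\nu Q=\Psi_*(\nu\times\beta)$ for every $\nu\in\M(X)$. An entirely parallel construction for the SSM --- attach an independent fair coin to each bond $\langle k,k+1\rangle$, consulted exactly when the stacks at $k$ and $k+1$ are both tall, to choose the direction of the transferred particle --- produces a $\tauhat$-equivariant measurable map $\Psih\colon\Xh\times\{0,1\}^\bbz\to\Xh$ with $\nuhat\Qh=\Psih_*(\nuhat\times\beta)$ for every $\nuhat\in\M(\Xh)$. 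Writing down these coin-driven updates and checking their equivariance is the ``straightforward but tedious'' bookkeeping alluded to in \csect{prelims} when $Q$ and $\Qh$ were introduced; this is the only real work in the proof, and no subtlety is concealed in it.

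Now let $\nu$ be shift-ergodic. The measure $\beta$ is Bernoulli and hence mixing under $\tau$, so $\nu\times\beta$ is ergodic for the diagonal shift on $X\times\{0,1\}^\bbz$. If $C\subset X$ is shift-invariant then, by equivariance of $\Psi$, the set $\Psi^{-1}(C)$ is shift-invariant in $X\times\{0,1\}^\bbz$ and therefore has $(\nu\times\beta)$-measure $0$ or $1$; consequently $C$ has $(\nu Q)$-measure $0$ or $1$, i.e.\ $\nu Q=\Psi_*(\nu\times\beta)$ is ergodic. The identical argument, with $\Psih$ and $\Xh$ in place of $\Psi$ and $X$, shows that $\nuhat\Qh$ is ergodic whenever $\nuhat$ is, which establishes the lemma.
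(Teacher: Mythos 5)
Your proof is correct and is essentially the paper's own argument: both realize the dynamics as a deterministic, translation-equivariant map driven by an i.i.d.\ fair-coin field, then use that the product of an ergodic measure with the (mixing) Bernoulli measure is ergodic, and that an equivariant image of an ergodic measure is ergodic. The only cosmetic difference is that the paper writes out the SSM case and calls the F-SSEP case similar, whereas you spell out the F-SSEP conflict structure (no adjacent $11011$ conflicts) explicitly.
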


\begin{proof}We consider $\Qh$ (the proof for $Q$ is similar), and so
must show that if $\nuhat\in\Mb(\Xh)$ then $\nuhat\Qh$ is ergodic.  We
define the probability space $(\Omega,P)$ by
$\Omega:=\Xh\times\{0,1\}^\bbz$, $P:=\nuhat\times\kappa$, where $\kappa$
is the Bernoulli measure with parameter $1/2$, and write a typical
element of $\Omega$ as $(\n,\alpha)$.  Then we can introduce a concrete
realization on $\Omega$ of one step of the $\Qh$ process, from $\n_0$
distributed as $\nuhat$ to $\n_1$ distributed as $\nuhat\Qh$, as follows.
Recall that if $\n_0$ has a short stack on either $i$ or $i+1$ then the
movement, or non-movement. of a particle across the bond
$\langle i,i+1\rangle$, in passing from $\n_0$ to $\n_1$, is determined
by the rule given in \csect{intro}; we supplement this rule by requiring
that if $\n_0$ has tall stacks at both $i$ and $i+1$ then a particle moves
from site $i$ to site $i+1$ if $\alpha(i)=1$ and from $i+1$ to $i$ if
$\alpha(i)=0$.

As the product of measures which are respectively ergodic and mixing
under translations, $P$ is ergodic under translations.  But then
$\nuhat\Qh=\n_{1*}P$ is the covariant image of an ergodic measure and
hence is ergodic.\end{proof}

\begin{theorem}\label{QQhat}There exists a $\tau$-coupling $\Qo$ of the
  Markov transitions kernels $\Qh$ and $Q$ for the SSM and F-SSEP.
\end{theorem}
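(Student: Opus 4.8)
The plan is to invoke \crem{shel}: it suffices, for each $\n\in\Xh$, to exhibit a transition probability $\Qo\bigl((\n,\phi(\n)),\cdot\bigr)$ supported on $\Yo$ whose first marginal is $\Qh(\n,\cdot)$ and whose second marginal is $Q(\phi(\n),\cdot)$. I would build it as the joint law of $(\n_1,\xi_1)$, where $\n_1$ is the result of one SSM step from $\n$, $\xi_1$ the result of one F-SSEP step from $\phi(\n)$, and both updates are driven by a single common family of fair coins; it then remains only to check that $\xi_1$ is almost surely a translate of $\phi(\n_1)$, so that the joint law lives on $\Yo$.

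The heart of the matter is a structural description of the F-SSEP dynamics on a configuration of the special form $\phi(\n)$. Writing $\phi(\n)$ as the concatenation of the blocks $0\,1^{\n(i)}$, $i\in\bbz$---so that the $0$'s of $\phi(\n)$ are exactly the block bottoms---one checks directly that the only particles of $\phi(\n)$ with exactly one occupied neighbor are the leftmost and rightmost particles of the blocks with $\n(i)\ge2$, that is, of the \emph{tall} stacks, and that each such particle has a unique jump target, namely the $0$ immediately to its left (for a leftmost particle) or immediately to its right (for a rightmost particle). Consequently the $0$ at the left end of block $i{+}1$ can receive a jump from its left neighbor (the rightmost particle of block $i$) exactly when stack $i$ is tall, and from its right neighbor (the leftmost particle of block $i{+}1$) exactly when stack $i{+}1$ is tall, and in either case the effect on the stack heights is precisely the corresponding particle transfer across the SSM bond $\langle i,i+1\rangle$. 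This gives a bijection between the $0$'s of $\phi(\n)$ and the bonds of the SSM under which the local update rules coincide term for term: if neither adjacent stack is tall nothing happens at the site / across the bond; if exactly one is tall the transfer is deterministic, from the tall stack to the short; and if both are tall the site is contested by two F-SSEP particles---the only source of randomness in the F-SSEP step from $\phi(\n)$---exactly where the SSM chooses a transfer direction at random, with the evident identification of the two outcomes.

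Granting this dictionary, the coupling is immediate: assign one fair coin to each SSM bond, all independent, and let the coin of bond $\langle i,i+1\rangle$ govern both the SSM transfer direction across that bond and the resolution of the contest at the corresponding $0$ of $\phi(\n)$, these being needed on exactly the same event. This realizes a faithful step of each dynamics simultaneously, so the two marginals of the joint law are $\Qh(\n,\cdot)$ and $Q(\phi(\n),\cdot)$; extending $\Qo$ to all of $\Yo$ by translation covariance, as in \crem{shel}, then completes the construction. The point at which I expect to have to work---laboriously rather than deeply---is the verification underlying the dictionary: that no mobile particle is overlooked and no site is contested by more than its two neighbors (both easy), and, most tediously, that the updated configuration $\xi_1$ is again a concatenation of blocks $0\,1^m$, with block sequence $(\n_1(i))_i$ and possibly shifted because block bottoms move under the dynamics, so that $\xi_1=\tau^q\phi(\n_1)$ for some integer $q$ and hence $(\n_1,\xi_1)\in\Yo$. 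This last step is a routine but patient run through the local patterns around tall and short stacks, always using the time-$t$ configuration to decide which jumps are attempted.
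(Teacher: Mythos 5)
Your proposal is correct and follows essentially the same route as the paper: reduce via \crem{shel} to coupling the one-step transitions from $(\n,\phi(\n))$, and exploit the bijection between SSM bonds and the block-bottom $0$'s of $\phi(\n)$, under which the deterministic and contested updates of the two models match exactly, so a single family of fair coins drives both. The paper phrases this by first evolving $\n_0\to\n_1$ and then reading off $\eta_1$ through the position map $K$, but the underlying dictionary and the deferred routine verification that $\eta_1$ is a translate of $\phi(\n_1)$ are the same as in your argument.
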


\begin{proof}As in the definition of $\Qh$ and $Q$ in \csect{prelims} we
give the transition rules for a Markov process $(\n_t,\eta_t)$ on
$\Xo\subset \Xh\times X$ (see \cdef{QQhat}), leaving the specification of
$\Qo$ as an easy exercise.  By \crem{shel} it suffices to consider only
the transition from $(\n_0,\eta_0)$ to $(\n_1,\eta_1)$ for
$\eta_0=\phi(\n_0)$.  As a preliminary, for $\n\in\Xh$ we define
the map $K=K_{\n}$, with $K:\bbz\to\bbz$, so that $K(i)$ is the
starting point of the word $\chi(\zeta(i))$ in the substitution
\eqref{fund}:
 \be
K(i)=\begin{cases}1+\sum_{j=1}^{i-1}(\n(j)+1),& \text{if $i\ge1$,}\\
    1-\sum_{j=i}^0(\n(j)+1),& \text{if $i\le0$.}\end{cases}
 \ee

For the dynamics under $\Qo$ we allow $\n_0$ to evolve to $\n_1$
according the the $\Qh$ dynamics, and let $\n_1$ and $\eta_0$ determine
$\eta_1$ as follows.  Write $K=K_{\n_1}$. Then, if in passing from $\n_0$
to $\n_1$ a particle jumps from site $i-1$ to site $i$, then in passing
from $\eta_0$ to $\eta_1$ a particle jumps from site $K(i)-1$ to site
$K(i)$, while if a particle jumps from site $i$ to site $i-1$ in passing
from $\n_0$ to $\n_1$ then one jumps from site $K(i)+1$ to site $K(i)$ in
passing from $\eta_0$ to $\eta_1$.  It is straightforward to verify that
then $(\n_1,\eta_1)\in\Xo$ and with $\eta_1$ distributed according to
$Q(\eta_0,\cdot)$.  \end{proof}

With \clem{QQhatergodic} and \cthm{QQhat} we can apply \ccor{stationary}
to obtain the correspondence of the F-SSEP and SSM stationary states.  The
result is summarized in \cthm{bijection}.

\section{Possible ETIS states of the SSM for $\rhohat>1$.\label{others}}

In this appendix we discuss, in the case where \cconj{main} is not
satisfied, the passage from stationary states of the SSM on the even
sector $\Xha_e$ to general ETIS states of the SSM with density
$\rhohat>1$.  By \cthm{tricky}, the latter all have support on $\Xha$;
moreover, as observed in \csect{rhobig}, the stationary states on $\Xha$
include in addition only $\muhat^{(2)}$.  Let $\N_{\rhohat_e}$ denote the
family of stationary states on $\Xha_e$ with density $\rhohat_e$, that
is, states $\nu$ for which $\nu$-a.e.~$\n\in\Xha_e$ satisfies
 \be\label{rhodefx}
\lim_{N\to\infty}\frac1N\sum_{i=1}^N\n_i =
\lim_{N\to\infty}\frac1N\sum_{i=-N}^{-1}\n_i = \rhohat_e.
 \ee
 Let $\Nb_{\rhohat_e}$ be the extremal elements of $\N_{\rhohat_e}$.  We
analyze completely the structure of ETIS states on $\Xha$, in terms
of the states of $\Nb_{\rhohat_e}$, in two cases: when for each
$\rhohat_e$, $\Nb_{\rhohat_e}$ contains only TI states
(\cthm{allti}), and when for each $\rhohat_e$, $\Nb_{\rhohat_e}$ is
countable (\cthm{structurex}).  The first case is quite simple:

\begin{theorem}\label{allti} If all stationary states on $\Xha_e$ are
translation invariant then every ETIS state $\mu$ on $\Xha$ of density
$\rhohat$ is of the form $\gamma_*(\mu_*\times\lambda)$, with $\lambda$
an ergodic TI measure on $S$ of density $\kappa$ and
$\mu_*\in\Nb_{\rhohat_e}$, where $\rhohat_e=\rhohat-\kappa$.  Conversely,
every state of this form is ETIS.  \end{theorem}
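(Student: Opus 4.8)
The plan is to derive everything from \clem{gamma}, which transfers the problem to the product space $\Xha_e\times S$. So let $\mu$ be an ETIS state on $\Xha$ of density $\rhohat$. By \clem{gamma}(a), $\tilde\mu:=\gamma^{-1}_*\mu$ is an ETIS state on $\Xha_e\times S$, and by \clem{gamma}(b) it has the form $\tilde\mu(d\n\,d\sigma)=\mu_\sigma(d\n)\,\lambda(d\sigma)$, where $\lambda$ is an ergodic TI probability measure on $S$ of some density $\kappa$, the $\mu_\sigma$ are (for $\lambda$-a.e.\ $\sigma$) stationary probability measures on $\Xha_e$ of density $\rhohat_e:=\rhohat-\kappa$, and $\mu_{\tau\sigma}=\tau_*\mu_\sigma$ for $\lambda$-a.e.\ $\sigma$.

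The key step is to feed in the hypothesis: each $\mu_\sigma$, being a stationary state on $\Xha_e$, is translation invariant, so $\tau_*\mu_\sigma=\mu_\sigma$ and the relation $\mu_{\tau\sigma}=\tau_*\mu_\sigma$ collapses to $\mu_{\tau\sigma}=\mu_\sigma$ $\lambda$-a.e. Testing against the countably many bounded cylinder functions $f$ on $\Xha_e$, each $\sigma\mapsto\mu_\sigma(f)$ is then a bounded measurable function invariant (mod $\lambda$-null sets) under $\sigma\mapsto\tau\sigma$, hence $\lambda$-a.e.\ constant since $\lambda$ is ergodic; as these $f$ determine a measure on the Polish space $\Xha_e$, it follows that $\sigma\mapsto\mu_\sigma$ is $\lambda$-a.e.\ equal to a single measure $\mu_*$, so $\tilde\mu=\mu_*\times\lambda$. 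Here $\mu_*$ is stationary of density $\rhohat_e$, so $\mu_*\in\N_{\rhohat_e}$; and $\mu_*\in\Nb_{\rhohat_e}$, for a decomposition $\mu_*=t\nu_1+(1-t)\nu_2$ with $t\in(0,1)$ and $\nu_i\in\N_{\rhohat_e}$ (hence each $\nu_i$ TI by hypothesis) would give the nontrivial decomposition $\tilde\mu=t(\nu_1\times\lambda)+(1-t)(\nu_2\times\lambda)$ into TIS states, contradicting extremality of $\tilde\mu$. This yields the stated form $\mu=\gamma_*(\mu_*\times\lambda)$.

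For the converse, let $\mu_*\in\Nb_{\rhohat_e}$ and let $\lambda$ be an ergodic TI probability measure on $S$ of density $\kappa$ with $\rhohat_e=\rhohat-\kappa$; by \clem{gamma}(a) it suffices to show $\mu_*\times\lambda$ is ETIS on $\Xha_e\times S$. It is TI (product of TI measures) and stationary (the dynamics on the $S$-factor is trivial and $\mu_*$ is stationary). Suppose $\mu_*\times\lambda=t\tilde\mu_1+(1-t)\tilde\mu_2$ with $t\in(0,1)$ and the $\tilde\mu_i$ TIS. Passing to $S$-marginals and using that $\lambda$, being ergodic, is extremal among TI measures on $S$, both $\tilde\mu_i$ have $S$-marginal $\lambda$; disintegrating, $\tilde\mu_i(d\n\,d\sigma)=\mu^{(i)}_\sigma(d\n)\,\lambda(d\sigma)$, and comparison of disintegrations gives $\mu_*=t\mu^{(1)}_\sigma+(1-t)\mu^{(2)}_\sigma$ for $\lambda$-a.e.\ $\sigma$. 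By the arguments in the proof of \clem{gamma}(b), each $\mu^{(i)}_\sigma$ is stationary (hence TI by hypothesis) and, since $\tilde\mu_i\ll\mu_*\times\lambda$ has $\tilde\mu_i$-a.s.\ configurations of density $\rhohat$, of density $\rhohat_e$ for $\lambda$-a.e.\ $\sigma$; thus $\mu^{(i)}_\sigma\in\N_{\rhohat_e}$. Extremality of $\mu_*$ now forces $\mu^{(1)}_\sigma=\mu^{(2)}_\sigma=\mu_*$ $\lambda$-a.e., i.e.\ $\tilde\mu_1=\tilde\mu_2=\mu_*\times\lambda$, so $\mu_*\times\lambda$ is extremal.

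I expect the only real care to be needed in the measure-theoretic bookkeeping, and not in any substantive difficulty: in the forward direction, making rigorous that a $\lambda$-a.e.\ translation-invariant assignment $\sigma\mapsto\mu_\sigma$ of probability measures on $\Xha_e$ is $\lambda$-a.e.\ constant (this is where ergodicity of $\lambda$ enters decisively, together with separability of the space of measures on $\Xha_e$); and in the converse, checking that the conditional measures $\mu^{(i)}_\sigma$ genuinely inherit both stationarity and the density $\rhohat_e$. Both points are essentially re-runs of steps already carried out in the proof of \clem{gamma}(b).
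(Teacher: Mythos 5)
Your proof is correct and follows essentially the same route as the paper's: apply \clem{gamma}, use the hypothesis to collapse $\mu_{\tau\sigma}=\tau_*\mu_\sigma$ into $\tau$-invariance of $\sigma\mapsto\mu_\sigma$, invoke ergodicity of $\lambda$ to conclude $\mu_\sigma$ is $\lambda$-a.e.\ a constant $\mu_*$, and derive extremality of $\mu_*$ from that of $\tilde\mu$. The paper dismisses the converse as ``clear''; you supply the disintegration details, but there is no substantive difference in approach.
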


\begin{proof} If $\mu$ is an ETIS state on $\Xh^*$ of density $\rhohat$ then
\clem{gamma} implies that $\mu=\gamma_*\tilde\mu$ with
$\tilde\mu(d\n\,d\sigma)=\mu_\sigma(d\n)\lambda(d\sigma)$ and
$\mu_{\tau\sigma}=\tau_*\mu_\sigma$.  But if all stationary states on
$\Xha_e$ are TI then $\mu_\sigma=\mu_{\tau\sigma}$ for all $\sigma$, so
that the ergodicity of $\lambda$ implies that $\mu_\sigma$ is independent
of $\sigma$, $\lambda$-a.s., and extremality of $\mu$ implies that
this state, $\mu_*$, must be ETIS on $\Xha_e$.  The converse is clear.
\end{proof}

We now consider the possibility of non-TI stationary states on $\Xha_e$.
In the next definition we introduce a class of stationary states on
$\Xha_e\times S$ which we call {\it basic} states, and a further
restriction of this class to {\it irreducible} basic states.

\begin{definition}\label{answer} Let $\lambda$ be an ergodic TI measure
on $S$, let $\nu\in\Nb_{\rhohat_e}$ have period $n=n(\nu)$ under
translation, and let $m$ and $q$ be positive integers such that $n=qm$.
 \par\noindent
 (a) A {\it $(\lambda,m)$-partition} of $S$ is an ordered family
$A=\bigl(A_i\bigr)_{i=0}^{m-1}$ of subsets of $S$ such that
$\bigcup_iA_i=S$ and $A_i\cap A_j=\emptyset$ for $0\le i<j\le m-1$, both
up to sets of $\lambda$-measure zero, and such that the family is
cyclically permuted by translation: $\tau(A_k)= A_{(k+1)\bmod m}$.
Translations act on such partitions via $(\tau A)_k=A_{(k+1)\bmod m}$.
 \par\noindent
 (b) Let $A$ be a $(\lambda,m)$-partition of $S$, and let
$\nu^{(q)}:=q^{-1}\sum_{i=0}^{q-1}\tau^{im}_*\nu$.  Then
$\tilde\mu^{(\lambda,\nu,A)}$ is the state on $\Xha_e\times S$ with
$\tilde\mu^{(\lambda,\nu,A)}(d\n\,d\sigma)
=\mu^{(\nu,A)}_\sigma(d\n)\lambda(d\sigma)$,
where $\mu^{(\nu,A)}_\sigma=\tau_*^k\nu^{(q)}$ iff $\sigma\in A_k$, i.e.,
 \be\label{musigma}
  \mu^{(\nu,A)}_\sigma=\sum_{k=0}^{m-1}\ind_{A_k}(\sigma)\tau_*^k\nu^{(q)}.
 \ee
 $\tilde\mu^{(\lambda,\nu,A)}$ and $\gamma_*\tilde\mu^{(\lambda,\nu,A)}$
will be called {\it basic states}. 
 \par\noindent
 (c) The basic state $\tilde\mu^{(\lambda,\nu,A)}$ is {\it reduced} by
the basic state $\tilde\mu^{(\lambda,\nu,A')}$ if $A'$ is a proper
refinement of $A$; $\tilde\mu^{(\lambda,\nu,A)}$ is then called {\it
reducible}, and we say also that $\gamma_*\tilde\mu^{(\lambda,\nu,A)}$ is
reducible. If $\tilde\mu^{(\lambda,\nu,A)}$ and
$\gamma_*\tilde\mu^{(\lambda,\nu,A)}$ are not reducible then they are
{\it irreducible}.  \end{definition}

Observe that whether or not $\tilde\mu^{(\lambda,\nu,A)}$ is reducible
depends only on $\lambda$, $A$, and $n$, the period of the orbit of $\nu$
under the action of $\tau_*$: $\tilde\mu^{(\lambda,\nu,A)}$ is reducible
precisely when there is a $(\lambda,m')$-partition $A'$ of $S$ such that
$m'$ divides $n$ and $A'$ is a proper refinement of $A$.  Equivalently,
$\tilde\mu^{(\lambda,\nu,A)}$ is irreducible when the action of $\tau^n$
on $A_0$, equipped with the invariant measure $\lambda\big|_{A_0}$, is
ergodic.

We next give some simple consequences of \cdef{answer}.

\begin{lemma}\label{conseq}Let $\tilde\mu^{(\lambda,\nu,A)}$ be a basic
  state.  Then: 
 \par\noindent
(a) $\tilde\mu^{(\lambda,\nu,A)}$ is stationary.
 \par\noindent
 (b) $\tilde\mu^{(\lambda,\tau_*\nu,\tau
A)}(d\n\,d\sigma) =\tilde\mu^{(\lambda,\nu,A)}(d\n\,d\sigma)$.
 \par\noindent
 (c) For any $\sigma\in S$,
$\tau_*\mu^{(\nu,A)}_\sigma=\mu^{(\nu,A)}_{\tau\sigma}
=\mu^{(\nu,\tau^{-1}A)}_\sigma$.
In particular, $\tilde\mu^{(\lambda,\nu,A)}$ is TI. Moreover, if
$\lambda$ has density $\kappa$ then $\tilde\mu^{(\lambda,\nu,A)}$ has
density $\rhohat_e+\kappa$.
 \par\noindent
 (d) If the basic state $\tilde\mu^{(\lambda,\nu,A')}$ reduces
$\tilde\mu^{(\lambda,\nu,A)}$, and $A'_0\subset A_j$, then
$A_i=\bigcup_{k=0}^{p-1}A'_{(i-j+mk)\bmod m}$ for $i=0,\ldots,m-1$, where
$m=|A|$ and $p=|A'|/|A|$.  Moreover,
$\tilde\mu^{(\lambda,\nu,A)}
=p^{-1}\sum_{k=0}^{p-1}\tilde\mu^{(\lambda,\nu,\tau^{-j-mk}A')}$.
 \par\noindent
 (e) If $\tilde\mu^{(\lambda,\nu,A)}$ is irreducible then it cannot be
written as a convex combination of other
$\tilde\mu^{(\lambda',\nu',A')}$; more generally, we cannot have
 \be\label{intconv}
\tilde\mu^{(\lambda,\nu,A)} =\int\tilde\mu^\beta\,\alpha(d\beta),
 \ee
 with $\alpha$ a probability measure on triples
$\beta=(\lambda',\nu',A')$ which assigns zero
probability to $(\lambda,\nu,A)$.  \end{lemma}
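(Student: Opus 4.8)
Parts (a)--(d) are direct computations from \cdef{answer}, all resting on the single observation that $\tau^m_*\nu^{(q)}=\nu^{(q)}$: indeed $\tau^{qm}_*\nu=\tau^n_*\nu=\nu$, so $\tau^m_*\nu^{(q)}=q^{-1}\sum_{i=1}^{q}\tau^{im}_*\nu=\nu^{(q)}$. For (a), the dynamics on $\Xha_e\times S$ leaves the $S$-coordinate unchanged, so stationarity of $\tilde\mu^{(\lambda,\nu,A)}$ is equivalent to stationarity of each $\mu^{(\nu,A)}_\sigma=\tau^k_*\nu^{(q)}$; and $\nu^{(q)}$, a convex combination of the stationary measures $\tau^{im}_*\nu$, is stationary, hence so is every translate of it. For (b), one checks $(\tau_*\nu)^{(q)}=\tau_*\nu^{(q)}$ and, using $\tau^m_*\nu^{(q)}=\nu^{(q)}$ together with $(\tau A)_k=A_{(k+1)\bmod m}$, that $\mu^{(\tau_*\nu,\tau A)}_\sigma=\mu^{(\nu,A)}_\sigma$ for every $\sigma$. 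Part (c) is the same bookkeeping with $(\tau^{-1}A)_k=A_{(k-1)\bmod m}$; translation invariance of $\tilde\mu^{(\lambda,\nu,A)}$ then follows from (c) and the translation invariance of $\lambda$ (change variables first in $\n$, then in $\sigma$), while the density statement holds because $\mu^{(\nu,A)}_\sigma$-a.e.\ $\n$ has density $\rhohat_e$ and $\lambda$-a.e.\ $\sigma$ has density $\kappa$. For (d), write $m'=|A'|=mp$ and $q'=q/p$; expressing each $i\in\{0,\dots,q-1\}$ uniquely as $i=r+sp$ ($0\le r<p$, $0\le s<q'$) gives the identity $\nu^{(q)}=p^{-1}\sum_{r=0}^{p-1}\tau^{rm}_*\nu^{(q')}$. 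The block-containment formula for $A$ is forced by translation equivariance of $k\mapsto(\text{index of the block of }A\text{ containing }A'_k)$, which must be $k\mapsto(j+k)\bmod m$; and substituting the displayed identity into $\mu^{(\nu,A)}_\sigma$ and comparing term by term with $p^{-1}\sum_k\mu^{(\nu,\tau^{-j-mk}A')}_\sigma$ yields the stated convex decomposition.

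The substance is (e); I treat the general integral statement, the finite convex-combination case being the special case of an atomic $\alpha$ (for the integral version one equips the set of admissible triples with its natural standard Borel structure). Suppose $\tilde\mu^{(\lambda,\nu,A)}=\int\tilde\mu^\beta\,\alpha(d\beta)$ with $\beta=(\lambda',\nu',A')$. Taking the $S$-marginal gives $\lambda=\int\lambda'\,\alpha(d\beta)$; since every $\lambda'$ is translation invariant and $\lambda$ is ergodic, hence extremal among translation-invariant measures on $S$, we get $\lambda'=\lambda$ for $\alpha$-a.e.\ $\beta$. Disintegrating both sides over $\sigma$ then yields, for $\lambda$-a.e.\ $\sigma$,
\[
 \mu^{(\nu,A)}_\sigma \;=\; \int \mu^{(\nu',A')}_\sigma\,\alpha(d\beta).
\]
Now I invoke the uniqueness of the ergodic decomposition of stationary states on $\Xha_e$. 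The left side is $\tau^{k}_*\nu^{(q)}$ for $k=k(\sigma)$, whose representing measure is the uniform mass on the $q$ \emph{distinct} extremal stationary states $\tau^{\,k+im}_*\nu$, $i=0,\dots,q-1$ (distinct because $\nu$ has period $n=qm$); each $\mu^{(\nu',A')}_\sigma$ is a uniform mixture of translates of $\nu'$. Integrating representing measures and using uniqueness, for $\alpha$-a.e.\ $\beta$ the ergodic components of $\mu^{(\nu',A')}_\sigma$ all lie in $\{\tau^{\,k+im}_*\nu:i\}$; in particular the ergodic components of $\nu'$ lie among the translates of $\nu$, and since a nontrivial convex combination of distinct translates of $\nu$ would not be extremal in its density class (contradicting $\nu'\in\Nb_{\rhohat_e'}$), we get $\nu'=\tau^{\ell}_*\nu$ for some $\ell$. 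Using (b) to replace $\beta$ by the equivalent triple $(\lambda,\nu,\tau^{-\ell}A')$, we may assume $\nu'=\nu$ and hence $m'\mid n$; then comparing index sets in the display --- the coset of $\pi_A(\sigma)$ mod $m$ must contain that of $\pi_{A'}(\sigma)$ mod $m'$, where $\pi_B:S\to\bbz_{|B|}$ is the $\tau$-equivariant factor map attached to a partition $B$ --- gives $m\mid m'$ and $\pi_{A'}(\sigma)\equiv\pi_A(\sigma)\pmod m$ for $\lambda$-a.e.\ $\sigma$, i.e.\ $A'$ is a refinement of $A$.

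To conclude: by \cdef{answer}, irreducibility of $\tilde\mu^{(\lambda,\nu,A)}$ means $A$ admits no \emph{proper} refinement among $(\lambda,m')$-partitions with $m'\mid n$; since each $A'$ produced above is such a refinement, $A'=A$, and therefore $\tilde\mu^\beta=\tilde\mu^{(\lambda,\nu,A)}$ for $\alpha$-a.e.\ $\beta$. This contradicts the hypothesis --- in the finite case, that the $\tilde\mu^\beta$ are states \emph{other} than $\tilde\mu^{(\lambda,\nu,A)}$; in general, that $\alpha$ gives no mass to $(\lambda,\nu,A)$ (read modulo the identification of (b), under which $\beta\mapsto\tilde\mu^\beta$ is injective on admissible triples).

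I expect the genuine obstacle to be the middle paragraph: setting up the ergodic-decomposition step cleanly and pinning down the extremality claims (in particular, relating extremality within $\Nb_{\rhohat_e}$ to extremality among \emph{all} stationary states, using that the relevant states have definite density), and then executing the translate-and-relabel bookkeeping that turns ``$\nu'$ is a translate of $\nu$'' plus the index-set containment into the clean statement ``$A'$ refines $A$''. The verification that $\beta\mapsto\tilde\mu^\beta$ is injective modulo the translation identification of (b) --- needed to phrase the conclusion sharply --- is a secondary point, provable by the same comparison of ergodic components.
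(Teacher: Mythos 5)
Your proof is correct and its overall architecture for part (e) is the same as the paper's: take the $S$-marginal to force $\lambda'=\lambda$, disintegrate over $\sigma$, show $\nu'$ must be a translate of $\nu$, normalize via (b), show $A'$ refines $A$, and invoke irreducibility. The one place you diverge is the step identifying $\nu'$: you appeal to uniqueness of the extremal (Choquet/ergodic) decomposition of stationary states on $\Xha_e$, matching representing measures on both sides of the disintegrated identity, whereas the paper avoids that machinery by taking the Ces\`aro average $\lim_L(2L+1)^{-1}\sum_{|i|\le L}\tau_*^i\mu^{(\nu,A)}_\sigma=\nu^{(n)}$ on both sides and using only that $\nu^{(n)}$ is an \emph{extremal TIS} state while each $\nu'^{(n')}$ is TIS. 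The paper's trick is more elementary (no simplex structure needed), while your route is somewhat more transparent about \emph{why} the components must match and delivers the refinement statement $A'\preceq A$ directly from index-set containment; the paper instead establishes only $A'_0\subset A_0$ via the disjoint supports $G_i$ of the $\tau^i_*\nu$ and notes that cyclicity gives the rest. Your flagged concern about relating extremality in $\Nb_{\rhohat_e}$ to extremality among all stationary states is handled exactly as you suggest (definite density makes the density classes faces), and parts (a)--(d) match the paper's ``immediate/straightforward rearrangement'' computations.
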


\begin{proof}(a), (b), and (c) are immediate.
 \par\noindent
 (d) The first statement follows from
$A_k'=\tau^kA_0'\subset\tau^kA_j=A_{(k+j)\bmod m}$.  For the second it
suffices to prove that
$\mu^{(\nu,A)}_\sigma=p^{-1}\sum_{k=0}^{p-1}\mu^{(\nu,\tau^{-j-mk}A')}_\sigma$
for all $\sigma$, and this follows from the first  and
\eqref{musigma}, by straightforward rearrangement of sums.

  \par\noindent
(e) If \eqref{intconv} holds then ergodicity of $\lambda$ implies that
$\lambda'=\lambda$ $\alpha$-a.s., and thus, $\lambda$-a.s.,
 \be\label{dsig}
\mu_\sigma^{(\nu,A)} =\int\mu_\sigma^\beta\,\alpha(d\beta),
 \ee
 where $\mu_\sigma^\beta:=\mu_\sigma^{(\nu',A')}$.  Now we can use
$\lim_{L\to\infty}(2L+1)^{-1}
\sum_{i=-L}^L\tau_*^i\mu_\sigma^{(\nu,A)}=\nu^{(n)}$,
the corresponding formula for $\mu_\sigma^{(\nu'A')}$, and the bounded
convergence theorem, to conclude from \eqref{dsig} that
$\nu^{(n)}=\int\nu'^{(n')}\,\alpha(d\beta)$ (with $n'$ the period of
$\nu'$).  Moreover, since $\nu$ is extremal, $\nu^{(n)}$ is ETIS. But
then, since $\nu'^{(n')}$ is TIS, $\nu'$ must be a translate of $\nu$
$\alpha$-a.s.

Without loss of generality, then, we may assume that $\nu'=\nu$, and
with that it suffices to show that $A'_0\subset A_0$ (because
$\tau$ acts cyclically on $A$ and all $A'$).  Let $G_i$,
$i=0,\ldots,n-1$, support $\tau_*^i\nu$, with $G_i$ and $G_j$
disjoint for $i\ne j$.  We have from \eqref{intconv} that
 \be
0=\tilde\mu^{(\lambda,\nu,A)}(G_0\times A_0^c)
   =\int\tilde\mu^\beta(G_0\times A_0^c)\,\alpha(d\beta),
 \ee
 so that for $\alpha$-a.e.~$\beta$, $\tilde\mu^\beta(G_0\times A_0^c)=0$
and so also $\tilde\mu^\beta(G_0\times(A_0^c\cap A'_0))=0$.  But for
$\sigma\in A'_0$,
$\mu_\sigma^{(\nu,A')}=\nu^ {(q')}$ (with
$q'=n'/|A'|$), and so
 \be
 0=\tilde\mu^\beta(G_0\times(A_0^c\cap A'_0))
    =\nu^ {(q')}(G_0)\lambda(A_0^c\cap A'_0)
    =\frac1{q'}\lambda(A_0^c\cap A'_0).
 \ee
 Thus $A'_0\subset A_0$.
\end {proof}

\begin{lemma}\label{structure2}If $\Nb_{\rho_e}$ is countable for each
$\rhohat_e\ge1$ then each ETIS state on $\Xha$ is an irreducible basic
state.  \end{lemma}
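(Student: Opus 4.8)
The plan is to pass to the product picture of \clem{gamma}, use the countability hypothesis to write each fibre measure as a countable mixture of extremal stationary states, and then repeatedly apply the principle that an invariant quantity must be $\lambda$-a.s.\ constant by ergodicity of $\lambda$, while any rigidity-violating alternative produces a \emph{proper} convex splitting of $\mu$ into TIS states, contradicting extremality. Concretely, I would first invoke \clem{gamma}: an ETIS state $\mu$ on $\Xha$ of density $\rhohat$ has the form $\mu=\gamma_*\tilde\mu$ with $\tilde\mu(d\n\,d\sigma)=\mu_\sigma(d\n)\,\lambda(d\sigma)$, where $\lambda$ is ergodic and TI on $S$ of some density $\kappa$, each $\mu_\sigma$ is stationary on $\Xha_e$, $\mu_{\tau\sigma}=\tau_*\mu_\sigma$, and $\lambda$-a.e.\ $\mu_\sigma\in\N_{\rhohat_e}$ with $\rhohat_e=\rhohat-\kappa$. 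Since the dynamics conserves density, the ergodic decomposition of $\mu_\sigma$ stays inside $\N_{\rhohat_e}$, which by hypothesis is countable, so $\mu_\sigma=\sum_{\nu\in\Nb_{\rhohat_e}}c_\sigma(\nu)\,\nu$ with $c_\sigma(\nu)\ge0$, $\sum_\nu c_\sigma(\nu)=1$, and the weights measurable in $\sigma$ (e.g.\ $c_\sigma(\nu)=\mu_\sigma(G_\nu)$ for a fixed measurable partition $\{G_\nu\}$ of $\Xha_e$ separating the pairwise mutually singular extremal states). Because $\tau$ commutes with the dynamics, $\tau_*$ permutes $\Nb_{\rhohat_e}$ and carries the ergodic decomposition of $\mu_\sigma$ to that of $\mu_{\tau\sigma}=\tau_*\mu_\sigma$, giving the covariance $c_{\tau\sigma}(\nu)=c_\sigma(\tau_*^{-1}\nu)$, $\lambda$-a.s.

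Next I would collapse this mixture in three steps. (i)~For a $\tau_*$-orbit $O\subset\Nb_{\rhohat_e}$ the total weight $c_\sigma(O):=\sum_{\nu\in O}c_\sigma(\nu)$ is translation invariant in $\sigma$, hence $\lambda$-a.s.\ a constant $c(O)$; splitting $\mu_\sigma$ along orbits realizes $\mu$ as a countable convex combination of the TIS states $\gamma_*\!\bigl((c(O)^{-1}\sum_{\nu\in O}c_\sigma(\nu)\nu)\times\lambda\bigr)$ (these fibre measures being stationary and $\tau$-covariant, hence TIS on $\Xha_e\times S$ and, after pushing by $\gamma$, on $\Xha$, as in \clem{gamma}), so extremality of $\mu$ forces a single orbit $O=\{\tau_*^k\nu_0\}$ to carry all the mass. (ii)~Iterating the covariance relation gives $c_\sigma(\tau_*^k\nu_0)=c_{\tau^{-k}\sigma}(\nu_0)$, so $\int c_\sigma(\tau_*^k\nu_0)\,\lambda(d\sigma)$ is independent of $k$; as these integrals sum to $1$ over $O$, the orbit must be finite, say of period $n$, so $\nu_0$ has period $n$. (iii)~Writing $d_\sigma$ for the probability vector on $\ZZ/n\ZZ$ with $d_\sigma(k)=c_\sigma(\tau_*^k\nu_0)$, one has $d_{\tau\sigma}=Td_\sigma$ for the cyclic shift $T$; the (finite) $T$-orbit of $d_\sigma$ is translation invariant in $\sigma$, hence $\lambda$-a.s.\ a fixed transitive set $\{T^kp_0\}_{k=0}^{m-1}$ with $m\mid n$, and since $d_*\lambda$ is an ergodic $T$-invariant probability measure on a transitive finite set it is uniform. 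Thus $d_\sigma$ takes each value $T^kp_0$ with $\lambda$-probability $1/m$, and $A_k:=\{\sigma\mid d_\sigma=T^kp_0\}$ is a $(\lambda,m)$-partition of $S$ as in \cdef{answer}.

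It then remains to show $p_0$ is the vector equal to $1/q$ on multiples of $m$ and $0$ elsewhere ($q=n/m$). Since $p_0$ is $m$-periodic on $\ZZ/n\ZZ$ it is the $q$-fold repeat of a probability vector $\tilde b$ on $\ZZ/m\ZZ$, and a short computation gives $\mu_\sigma=\tau_*^k\bigl(\sum_{i=0}^{m-1}\tilde b_i\,\tau_*^i\nu^{(q)}\bigr)$ for $\sigma\in A_k$, with $\nu^{(q)}=q^{-1}\sum_{l=0}^{q-1}\tau_*^{lm}\nu_0$ exactly as in \cdef{answer}. This presents $\tilde\mu=\sum_i\tilde b_i\tilde\mu_i$ with each $\tilde\mu_i$ (fibre measure $\tau_*^{k+i}\nu^{(q)}$ on $A_k$) a TIS state, so $\mu=\sum_i\tilde b_i\,\gamma_*\tilde\mu_i$ and extremality forces $\tilde b$ to be a point mass; relabelling the $A_k$ then gives $\mu_\sigma=\tau_*^k\nu^{(q)}$ on $A_k$, i.e.\ $\tilde\mu=\tilde\mu^{(\lambda,\nu_0,A)}$ and $\mu=\gamma_*\tilde\mu^{(\lambda,\nu_0,A)}$ is a basic state. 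Irreducibility is then automatic: were it reducible, \clem{conseq}(d) would present $\mu$ as a proper convex combination of the distinct TIS basic states $\gamma_*\tilde\mu^{(\lambda,\nu_0,\tau^{-j-mk}A')}$, contradicting extremality of $\mu$.

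I expect the main difficulty to lie in steps (i)--(iii) together with the coset-uniformity step: one must identify at each stage the correct invariant (orbit weights, their $\sigma$-averages, the $T$-orbit of the phase vector, the phase vector itself) and check that every deviation from rigidity genuinely yields a proper convex splitting of $\mu$ into distinct TIS states. The ancillary points---measurability of $\sigma\mapsto(\mu_\sigma,c_\sigma)$, the stability of ergodicity and of extremality under $\gamma_*$ and under these averagings, and the elementary counting that rules out an infinite orbit---are routine but should be handled with some care.
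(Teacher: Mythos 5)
Your proposal is correct and follows essentially the same route as the paper's proof: pass to $\Xha_e\times S$ via \clem{gamma}, decompose each fibre $\mu_\sigma$ over the countable set $\Nb_{\rhohat_e}$ with the covariance $a_{\tau\sigma,\tau j}=a_{\sigma,j}$, use extremality to isolate a single $\tau_*$-orbit, rule out an infinite orbit, observe that the cyclic orbit of the weight vector is $\lambda$-a.s.\ constant, and apply extremality once more to collapse to a point mass, finishing with \clem{conseq}(d) for irreducibility. The only (harmless) divergence is in the finiteness step, where you integrate the covariance relation against $\lambda$ to get equal masses summing to $1$, whereas the paper uses the argmax function $f(\sigma)$ with $f(\tau\sigma)=f(\sigma)+1$; both are valid.
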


\begin{proof} We write $\Nb_{\rhohat_e}=\{\nu_j\}_{j\in\bbj}$, with
$\bbj$ either $\bbz$ or a finite set $\{1,2,\ldots,J\}$, and allow the
translation operator $\tau$ to act on $\bbj$ via
$\nu_{\tau j}=\tau_*\nu_j$.  Let $\bbjb$ denote the set of orbits in
$\bbj$ under translation.

 Now let $\mu$ be an ETIS state on $\Xh$ of density $\rhohat$.  Then from
\clem{gamma} we know that $\mu=\gamma_*\tilde\mu$ with
$\tilde\mu(d\n\,d\sigma)=\mu_\sigma(d\n)\lambda(d\sigma)$, $\lambda$
ergodic, $\mu_\sigma$ stationary, and
$\mu_{\tau\sigma}=\tau_*\mu_\sigma$.  For each $\sigma\in S$ we have
$\mu_\sigma=\sum_{j\in\bbj}a_{\sigma,j}\nu_j$, with coefficients
$a_{\sigma,j}$ which, from translation invariance, satisfy
$a_{\tau\sigma,\tau j}=a_{\sigma,j}$.  Now note that if $c\in\bbjb$ then
$Z_\sigma^{(c)}:=\sum_{j\in c}a_{\sigma,j}$ is TI and hence constant
$\lambda$-a.s.; from now on we write this as $Z^{(c)}$.  Define
$\mu_\sigma^{(c)}$ by
$\mu_\sigma^{(c)}:=(Z^{(c)})^{-1}\sum_{j\in c}a_{\sigma,j}\nu_j$ and
$\tilde\mu^{(c)}$ by
$\tilde\mu^{(c)}(d\n\,d\sigma)=\mu^{(c)}_\sigma(d\n)\lambda(d\sigma)$, so
that $\tilde\mu=\sum_{c\in\bbjb}Z^{(c)}\tilde\mu^{(c)}$; since the
$\tilde\mu^{(c)}$ are TIS, the extremality of $\tilde\mu$ implies that
$\tilde\mu=\tilde\mu^{(c_0)}$ for some $c_0\in\bbjb$.

We claim that $c_0$ must be a finite set.  For otherwise we may fix
$j_0\in c_0$ and define $f:S\to\bbz$ by
 \be
f(\sigma):=\min\{i\in\bbz\mid a_{\sigma,\tau^ij_0}
    =\max_{i'\in\bbz}a_{\sigma,\tau^{i'}j_0}\};
 \ee
 $f$ satisfies $f(\tau\sigma)=f(\sigma)+1$, so that by the translation
invariance of $\lambda$ the sets $f^{-1}(\{k\})$, $k\in\bbz$, have equal
$\lambda$-measure, a contradiction.

Thus we are reduced to the case where $c_0$ contains $n$ elements, that is,
if $j_0\in c$ then  $n$ is the minimal period for $\nu_{j_0}$, and 
 \be\label{firstexp}
\mu_\sigma=\sum_{i=0}^{n-1}a_{\sigma,\tau^ij_0}\nu_{\tau^ij_0}.
 \ee
 We regard $a_\sigma:=(a_{\sigma,\tau^ij_0}\bigr)_{i=0}^{n-1}$ as an
element of $\bbr^n$ and let translations act on $\bbr^n$ via
$\tau(b_0,\ldots,b_{n-1})=(b_{n-1},b_0,\ldots,b_{n-2})$, so that
$a_{\tau\sigma}=\tau a_\sigma$. The orbit of $a_\sigma$ under this action
is independent of translations in $\sigma$ and hence constant,
$\lambda$-a.s.; let $\theta$ denote this orbit.

 $|\theta|$ is a positive integer $m$ satisfying $n=qm$ for some integer
$q$.  Let $\theta=\{v_0,\ldots,v_{m-1}\}$, with
$\tau v_k=v_{(k+1)\bmod m}$, let $A_k:=\{\sigma\mid a_\sigma=v_k\}$, and
note that if $w\in\bbr^m$ is defined by $w_i=qv_{0,i}$ then
$w$ is independent of $\sigma$ and $\sum_{i=0}^{m-1}w_i=1$.  Then from
\eqref{firstexp} we find, using \eqref{musigma}, that
$\mu_\sigma=\sum_{r=0}^{m-1}w_i\mu^{(\nu_{j_0},\tau^{-i}A)}_\sigma$.
Thus
 \be
\tilde\mu=\sum_{r=0}^{m-1}w_i\tilde\mu^{(\lambda,\nu_{j_0},\tau^{-i}A)}.
 \ee
 But since $\tilde\mu$ is extremal, precisely one of the $w_i$ can be
 nonzero, so that $\tilde\mu$ is basic.  It then follows from
 \clem{conseq}~(d) that $\tilde\mu$, and hence $\mu$, is irreducible
 basic.\end{proof}

Now we can give the second main result of this appendix.

\begin{theorem}\label{structurex} If $\Nb_{\rho_e}$ is countable for each
$\rhohat_e\ge1$ then the ETIS states on $\Xha$ are precisely the basic
irreducible states.  \end{theorem}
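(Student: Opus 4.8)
The plan is to prove the two inclusions separately. That every ETIS state on $\Xha$ is a basic irreducible state is precisely \clem{structure2}, so nothing further is needed there.

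For the reverse inclusion, fix a basic irreducible state; by \clem{gamma}(a) it suffices to show that the corresponding $\tilde\mu:=\tilde\mu^{(\lambda,\nu,A)}$ on $\Xha_e\times S$ is ETIS. By \clem{conseq}(a) and (c), $\tilde\mu$ is already stationary and translation invariant, so only extremality is in question. Suppose then that $\tilde\mu=t\,\tilde\mu_1+(1-t)\,\tilde\mu_2$ with $t\in(0,1)$ and $\tilde\mu_1,\tilde\mu_2$ TIS states on $\Xha_e\times S$; I want to conclude $\tilde\mu_1=\tilde\mu_2$. Each $\tilde\mu_i$ decomposes into its extremal TIS components, which --- by \clem{gamma}(a) together with \clem{structure2} --- are irreducible basic states; combining the two decompositions yields a representation
\[
\tilde\mu=\int\tilde\mu^\beta\,\alpha(d\beta),
\]
where $\alpha$ is a probability measure on (equivalence classes, under \clem{conseq}(b), of) triples $\beta=(\lambda',\nu',A')$ and each $\tilde\mu^\beta$ is an irreducible basic state.

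Now set $a:=\alpha\bigl(\{[(\lambda,\nu,A)]\}\bigr)$, the mass $\alpha$ gives to the class of $(\lambda,\nu,A)$. If $a<1$, then splitting off this atom and renormalizing the remainder produces a probability measure $\alpha'$ assigning zero mass to $[(\lambda,\nu,A)]$ with $\tilde\mu=a\,\tilde\mu+(1-a)\int\tilde\mu^\beta\,\alpha'(d\beta)$, hence $\tilde\mu=\int\tilde\mu^\beta\,\alpha'(d\beta)$, contradicting \clem{conseq}(e). Therefore $a=1$; tracing back through the two decompositions, every extremal component of $\tilde\mu_1$ and of $\tilde\mu_2$ equals $\tilde\mu$, so $\tilde\mu_1=\tilde\mu_2=\tilde\mu$, establishing extremality and hence the theorem.

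The genuinely delicate point is the passage, in the middle step, from the extremal decompositions of $\tilde\mu_1,\tilde\mu_2$ to a single integral over irreducible basic states in a form to which \clem{conseq}(e) applies: one must set up the extremal-TIS decomposition measurably over the parameter space of basic states and be careful with the identification $\tilde\mu^{(\lambda,\tau_*\nu,\tau A)}=\tilde\mu^{(\lambda,\nu,A)}$ of \clem{conseq}(b), so that ``$\alpha$ assigns zero mass to $(\lambda,\nu,A)$'' is read at the level of the state rather than of the triple. The rest --- stationarity and translation invariance from \clem{conseq}, the transfer between $\Xha$ and $\Xha_e\times S$ from \clem{gamma}(a), and the fact that ETIS states are irreducible basic from \clem{structure2} --- is routine.
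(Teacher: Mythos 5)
Your proposal is correct and follows essentially the same route as the paper: the forward inclusion is Lemma~\ref{structure2}, and the reverse inclusion uses the fact that the extremal decomposition of an irreducible basic state consists of irreducible basic states, hence has the form \eqref{intconv}, which Lemma~\ref{conseq}(e) then forces to be trivial. You have merely spelled out the atom-splitting step and the identification issue from Lemma~\ref{conseq}(b) that the paper's two-sentence proof leaves implicit.
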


\begin{proof}\clem{structure2} tells us that every ETIS state is an
irreducible basic state.  It follows from this that the decomposition of
any irreducible basic state into ETIS components must be of the form
\eqref{intconv} and hence, by \clem{conseq}~(e), that such a state must
be ETIS.\end{proof}

\end{document}